\let\c@table\c@figure
\let\c@lstlisting\c@figure
\numberwithin{equation}{section}
\theoremstyle{plain}
\newtheorem{theorem}[equation]{Theorem}
\newtheorem{lemma}[equation]{Lemma}
\newtheorem{proposition}[equation]{Proposition}
\newtheorem{corollary}[equation]{Corollary}
\newtheorem{conjecture}[equation]{Conjecture}
\newtheorem*{namedtheorem}{\theoremname}
\newcommand{\theoremname}{testing}
\newenvironment{named}[1]{\renewcommand{\theoremname}{#1}\begin{namedtheorem}}{\end{namedtheorem}}
\theoremstyle{definition}
\newtheorem{definition}[equation]{Definition}
\newtheorem{example}[equation]{Example}
\long\def\@savemarbox#1#2{\global\setbox#1\vtop{\hsize\marginparwidth 
  \@parboxrestore\tiny\raggedright #2}}
\newcommand{\HH}{{\mathbb{H}}}
\newcommand{\CC}{{\mathbb{C}}}
\newcommand{\QQ}{{\mathbb{Q}}}
\newcommand{\calT}{\mathcal{T}}
\newcommand{\calO}{\mathcal{O}}
\newcommand{\calB}{\mathcal{B}}
\newcommand{\calP}{\mathcal{P}}
\newcommand{\scrA}{\mathscr{A}}
\newcommand{\PSL}{\operatorname{PSL}}
\newcommand{\SL}{\operatorname{SL}}
\newcommand{\bdy}{\partial}
\newcommand{\NZ}{\operatorname{NZ}}
\newcommand{\In}{\operatorname{In}}
\title{On Geometric triangulations of double twist knots}
\author{Dionne Ibarra}
\address{School of Mathematics, Monash University, VIC 3800, Australia.}
\email{{\rm \textcolor{blue}{dionne.ibarra@monash.edu}}}
\author{Daniel V. Mathews}
\address{School of Mathematics, Monash University, VIC 3800, Australia.}
\email{{\rm \textcolor{blue}{daniel.mathews@monash.edu}}}
\author{Jessica S. Purcell}
\address{School of Mathematics, Monash University, VIC 3800, Australia.}
\email{{\rm \textcolor{blue}{jessica.purcell@monash.edu}}}
\begin{document}

\subjclass[2020]{Primary: 57Q15. Secondary: 57K10, 57K32, 57K14.}
\keywords{A-polynomial, double twist knot, triangulation.}

\begin{abstract}
In this paper we construct two different explicit triangulations of the family of double twist knots $K(p,q)$ using methods of triangulating Dehn fillings, with layered solid tori and their double covers. One construction yields the canonical triangulation, and one yields a triangulation that we conjecture is minimal. We prove that both are geometric, meaning they are built of positively oriented convex hyperbolic tetrahedra. We use the conjecturally minimal triangulation to present eight equations cutting out the A-polynomial of these knots.
\end{abstract}

\maketitle

\section{Introduction}

Triangulations of 3-manifolds are of fundamental importance in low-dimensional topology: They have contributed to progress in algorithmic topology~\cite{haken, SnapPy, regina}, hyperbolic geometry~\cite{thurstonsnotes, NeumannZagier, SWcanonicaltriangulations}, representation theory~\cite{GGZ:representations}, and quantum topology~\cite{TuraevViro, 3DIndex}. However, there remain many open questions about 3-manifold triangulations. For example, considering only ideal triangulations of hyperbolic 3-manifolds, it is still difficult to determine when a triangulation is \emph{minimal}, i.e.\ using as few tetrahedra as possible; see~\cite{JRST,JRST2}. It is an open conjecture that a cusped hyperbolic 3-manifold has a triangulation that is convex with respect to the hyperbolic metric, that is, \emph{geometric}; see~\cite{Yoshida:TriangConjecture, HPhighlytwisted}. There is a \emph{canonical decomposition} into ideal polyhedra, but it remains difficult to relate its properties to the topology of the manifold, and difficult to determine when this is a triangulation; see~\cite{SWcanonicaltriangulations}. 

This paper considers minimal, geometric, and canonical triangulations.
We consider a simple family of 3-manifolds that has been well-studied: the double twist knots.
General double twist knots $K(p,q)$ are 2-bridge knots that have exactly two twist regions, and we define these to contain $2p$ and $2q$ negative crossings, respectively.  Thus the knot $K(p,q)$ is obtained from the complement of the Borromean rings by performing $-1/p$ Dehn filling on one link component, and $-1/q$ Dehn filling on the other, as in Figure~\ref{fig:BorromeanDehnfilling}.

\begin{figure}
  \centering
  \includegraphics{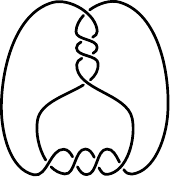}
  \hspace{.3in}
\begingroup%
  \makeatletter%
  \providecommand\color[2][]{%
    \errmessage{(Inkscape) Color is used for the text in Inkscape, but the package 'color.sty' is not loaded}%
    \renewcommand\color[2][]{}%
  }%
  \providecommand\transparent[1]{%
    \errmessage{(Inkscape) Transparency is used (non-zero) for the text in Inkscape, but the package 'transparent.sty' is not loaded}%
    \renewcommand\transparent[1]{}%
  }%
  \providecommand\rotatebox[2]{#2}%
  \newcommand*\fsize{\dimexpr\f@size pt\relax}%
  \newcommand*\lineheight[1]{\fontsize{\fsize}{#1\fsize}\selectfont}%
  \ifx\svgwidth\undefined%
    \setlength{\unitlength}{96.11441517bp}%
    \ifx\svgscale\undefined%
      \relax%
    \else%
      \setlength{\unitlength}{\unitlength * \real{\svgscale}}%
    \fi%
  \else%
    \setlength{\unitlength}{\svgwidth}%
  \fi%
  \global\let\svgwidth\undefined%
  \global\let\svgscale\undefined%
  \makeatother%
  \begin{picture}(1,0.91099835)%
    \lineheight{1}%
    \setlength\tabcolsep{0pt}%
    \put(0,0){\includegraphics[width=\unitlength,page=1]{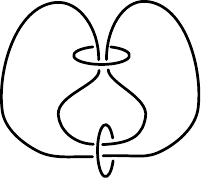}}%
    \put(0.58030736,0.69603803){\color[rgb]{0,0,0}\makebox(0,0)[lt]{\lineheight{1.25}\smash{\begin{tabular}[t]{l}$-1/p$\end{tabular}}}}%
    \put(0.57339326,0.02067645){\color[rgb]{0,0,0}\makebox(0,0)[lt]{\lineheight{1.25}\smash{\begin{tabular}[t]{l}$-1/q$\end{tabular}}}}%
  \end{picture}%
\endgroup%

  \caption{Left: $K(2,2)$. Right: The double twist knot $K(p,q)$ is obtained by Dehn filling the Borromean rings.}
  \label{fig:BorromeanDehnfilling}
\end{figure}

Double twist knots are 2-bridge knots. Therefore they have a well known triangulation, the Sakuma--Weeks triangulation~\cite{SWcanonicaltriangulations}, which is known to be canonical~\cite{GueritaudFuter:Canonical, ASWY}. However, that triangulation is very far from minimal. In this paper, we present a new triangulation of the double twist knots, $\calT_{K(p,q)}$, which uses fewer tetrahedra.
These triangulations are constructed via triangulations of Dehn fillings, using layered solid tori. We prove that they are geometric.

\begin{named}{Theorem~\ref{Thm:geometric}}
For every $p,q\geq 2$, the ideal triangulation $\calT_{K(p,q)}$ of the complement of $K(p,q)$ constructed in Section~\ref{Sec:mintriangulation} is geometric.
\end{named}

In fact, it follows from work of Ham and Purcell~\cite{HPhighlytwisted} that eventually, when both $p$ and $q$ are sufficiently large, triangulations $\calT_{K(p,q)}$ must be geometric. However, that result is not effective, so values of $p$ and $q$ that ensure a geometric triangulation cannot be determined from that paper. Theorem~\ref{Thm:geometric} shows they are, in fact, always geometric. 

We conjecture they are minimal.

\begin{conjecture}
For $p,q\geq 2$, the ideal triangulations $\calT_{K(p,q)}$ of the complement of $K(p,q)$ constructed in Section~\ref{Sec:mintriangulation} realise the triangulation complexity for this 3-manifold. That is, these ideal triangulations give the minimal number of tetrahedra required to triangulate the $K(p,q)$ knot complement. 
\end{conjecture}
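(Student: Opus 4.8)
The plan is to prove the conjecture by establishing a lower bound on triangulation complexity that matches the explicit tetrahedron count of $\calT_{K(p,q)}$. Write $N(p,q)$ for the number of tetrahedra in the triangulation built in Section~\ref{Sec:mintriangulation}, and let $c(K(p,q))$ denote the minimal number of tetrahedra in any ideal triangulation of the complement. The construction already gives the upper bound $c(K(p,q)) \leq N(p,q)$, so the entire content of the conjecture is the reverse inequality: every ideal triangulation uses at least $N(p,q)$ tetrahedra. The first thing I would do is read off $N(p,q)$ explicitly from the layered solid torus construction; from the continued-fraction description of the two fillings I expect $N(p,q)$ to grow linearly in $p+q$, mirroring the growth of the crossing number.

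The central difficulty, and the reason this is only a conjecture, is that the usual lower bounds are all insensitive to the twisting. The hyperbolic volume of $K(p,q)$ is bounded above (it increases to the volume of the Borromean rings complement as $p,q\to\infty$), so the volume bound $c(K(p,q)) \geq \operatorname{vol}/v_3$, where $v_3$ is the volume of the regular ideal tetrahedron, stays bounded and cannot see the linear growth of $N(p,q)$. Likewise the Seifert genus and the Thurston norm of the complement remain bounded for this family, so the homological lower bounds that succeed for closed manifolds do not directly grow either. Any successful argument must therefore detect the Dehn-filling coefficients $p,q$ combinatorially rather than geometrically.

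The most promising route, and the one suggested by the cited work of Jaco--Rubinstein--Tillmann~\cite{JRST, JRST2}, is to use normal-surface and $\ZZ_2$-Thurston-norm lower bounds, amplified by passing to covers. Concretely, I would pass to the $2$-fold cover $\tilde M$ of the complement associated to $H_1(\cdot;\ZZ_2)$: this is the complement of the preimage $\tilde K$ of the knot in the lens space $L(\alpha,\beta)$ arising as the double branched cover of the $2$-bridge knot $K(p,q)$, where $\alpha=\det K(p,q)$ \emph{does} grow with $p$ and $q$. Since an ideal triangulation with $n$ tetrahedra lifts to one with $2n$ tetrahedra, a lower bound for the complexity of $\tilde M$---obtained from the lens-space bounds of~\cite{JRST,JRST2} together with the observation that capping off the cusp of $\tilde M$ to form $L(\alpha,\beta)$ changes the tetrahedron count only by a bounded amount---descends to a lower bound on $c(K(p,q))$ that now grows with $p$ and $q$. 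In parallel I would verify the conjecture for small $p,q$ directly with \texttt{Regina}~\cite{regina}, both to confirm the exact value of $N(p,q)$ and to anchor an induction in $p$ and $q$.

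The main obstacle I anticipate is sharpness. Each of the steps above---passing to a cover, filling the lens space, and the $\ZZ_2$-norm estimate itself---introduces additive or multiplicative slack, so this strategy is likely to prove only that $c(K(p,q))$ grows linearly in $p+q$, not that it equals $N(p,q)$ exactly. Closing the gap will require either a genuinely new lower bound tailored to ideal triangulations of cusped manifolds, since the machinery of~\cite{JRST,JRST2} is developed for closed, material triangulations and extending it to the ideal setting is itself nontrivial, or else a direct normalisation argument showing that any ideal triangulation of the $K(p,q)$ complement can be simplified to $\calT_{K(p,q)}$ without ever increasing the number of tetrahedra. I expect this latter, combinatorial approach to be the one that could yield the exact equality, but controlling the simplification moves in the presence of the long Margulis tubes created by the twist regions is precisely where the argument will be hardest.
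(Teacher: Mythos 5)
The statement you are trying to prove is stated in the paper as a \emph{conjecture}, and the paper offers no proof of it: the only support given there is the observation that the construction furnishes the upper bound $c(K(p,q))\leq N(p,q)$ together with agreement with the SnapPy census for the five small cases $K(2,2), K(2,3), K(2,4), K(3,3), K(3,4)$, plus a heuristic appeal to the folklore conjecture that replacing a layered solid torus in a minimal triangulation preserves minimality. Your proposal correctly identifies that the entire content of the statement is the matching lower bound, and your diagnosis of why the standard tools fail (volume and homological bounds are insensitive to $p$ and $q$) is accurate. But the proposal does not close, or even substantially narrow, that gap, and you say so yourself in the final paragraph.

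Concretely, the gap is this: the route through the $\ZZ_2$-cover and the $\ZZ_2$-Thurston-norm bounds of Jaco--Rubinstein--Tillmann would, even if every step were carried out, yield at best an inequality of the form $c(K(p,q))\geq \alpha(p+q)-\beta$ with constants you cannot control, because (i) the JRST machinery is developed for closed, material triangulations and its extension to ideal triangulations of cusped manifolds is itself an open technical problem; (ii) passing to the double cover costs a factor of $2$ in the wrong direction (a minimal triangulation of the cover need not be a lift, so the factor-of-two relation between tetrahedron counts only gives $c(\tilde M)\leq 2c(M)$, which is the inequality you want, but the lens-space bound you would feed into it is not sharp for these $\alpha,\beta$); and (iii) capping off the cusp changes the count by an amount that is bounded but nonzero. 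A bound that is off by even one tetrahedron does not prove the conjecture. The alternative you mention---a monotone simplification argument reducing an arbitrary ideal triangulation to $\calT_{K(p,q)}$---is the kind of argument that could give exact equality, but no such argument is sketched, and controlling Pachner-type moves without increasing the tetrahedron count is precisely the hard open problem here. In short: your proposal is a reasonable research plan, and it is honest about its own incompleteness, but it is not a proof, and the paper does not contain one either.
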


Evidence for this conjecture comes from SnapPy: Knots $K(2,2)$, $K(2,3)$, $K(2, 4)$, $K(3,3)$ and $K(3,4)$ all appear in the SnapPy census, with names $s648$, $v2284$, $t05803$, $t07682$, and $o9\_20397$, respectively.
By construction, the number of tetrahedra for a manifold in the census is minimal over all triangulations of that manifold. 
The numbers of tetrahedra in the census agree with the numbers of tetrahedra in our triangulations. For larger $p$ and $q$, there are too many tetrahedra for the triangulation to appear in the census. However, for higher $p,q$ the triangulation is obtained by adjusting a layered solid torus in a Dehn filled manifold. It is conjectured that performing this move on a minimal triangulation yields a new minimal triangulation; see for example Thompson~\cite{Thompson:TriangMagic}. 

A similar conjecture for twist knots appears in~\cite{aribi2022geometric}. If we adjust our triangulation by folding one cusp (as in the construction of Section~\ref{Sec:CanonicalConstruction} for $p=2$), and performing the triangulated Dehn filling on the other as described in the construction of Section \ref{Sec:mintriangulation}, we obtain a triangulation of twist knot complements with the same number of tetrahedra as in that paper.

\subsection{Relation to canonical triangulation}
Our conjectured minimal triangulation is constructed by taking a Dehn filling of a triangulation of the Borromean rings complement. The canonical triangulation can also be seen as a triangulated Dehn filling. We also work through this construction in Section~\ref{Sec:CanonicalConstruction}, from the Dehn filling perspective. This is similar to work done in Ham and Purcell~\cite{HPhighlytwisted}, but here we provide more details, including tetrahedron labels, to make it easier to compute with this triangulation. One goal of writing this paper is to present the triangulations with sufficient detail to ensure ease of applicability for further applications.

\subsection{Application: deformation variety equations}
Finally, we conclude the paper with an application to computing deformation varieties and A-polynomials for double twist knots.  Thurston showed that a hyperbolic 3-manifold with a torus boundary component has a space of deformations through incomplete hyperbolic structures that is 2-real (or 1-complex) dimensional. Such a structure gives rise to a representation of the fundamental group into $\PSL(2,\CC)$, leading to studies of character varieties, or varieties of representations of the fundamental group in to $\SL(2,\CC)$~\cite{CullerShalen}. The two degrees of freedom can be encoded by variables representing curves on the boundary torus. This leads to the definition of the A-polynomial~\cite{CCGLS}, which can be viewed as a parameterisation of the space of incomplete hyperbolic structures of a knot complement. The $\PSL$-A-polynomial can be computed via representations of fundamental groups into $\PSL(2,\CC)$ as in~\cite{BoyerZhang}. A similar picture can be obtained using triangulations rather than group representations. The space of all hyperbolic structures satisfying triangulation gluing equations is called the deformation variety (also called the shape variety or the gluing variety) of the 3-manifold. Gluing and cusp equations give a $\PSL$-A-polynomial that is closely related to the original A-polynomial defined by representation theory; see Champanerkar~\cite{ChampApoly}.

This paper considers deformation varieties and A-polynomials for double twist knots. We focus on these knots because of a relationship with the AJ Conjecture, which conjectures that a quantised version of the A-polynomial annihilates the coloured Jones polynomial~\cite{GAJconjecture}. 
While the AJ Conjecture
has been proved for most 2-bridge knots \cite{LeTr}, it remains open for the family of double twist knots $K(p,p)$. 
The A-polynomials of these knots have been considered from the representation perspective. Petersen found functions cutting out the A-polynomial of $K(p,p)$ in terms of $p$~\cite{PetersenApolynomial2bridge}. In this paper, we find a parametrisation using triangulations, for more general $p$ and $q$. 

\begin{named}{Corollary~\ref{Cor:Apoly}}
For $p, q\geq 3$, eight equations define the $\PSL$-$A$-polynomial of the double twist knot $K(p,q)$, including six Ptolemy equations and two ``Tail'' equations.
\end{named}

The Ptolemy equations produced in Corollary~\ref{Cor:Apoly} are relatively simple, with low degree in the variables. The two Tail equations are more complicated, arising from Dehn fillings, which makes it challenging to reduce the equations further, particularly when $p$ and $q$ are large. However, all equations have the advantage that they are explicit, rather than defined recursively.

\subsection{Acknowledgements}
This research was partially supported by the Australian Research Council, grant DP210103136. Ibarra and Purcell were also supported by ARC DP240102350. The colour pallette was chosen to be accessible. 

\section{Triangulation of the Borromean rings complement}

Throughout this paper we will be considering triangulations of knot and link complements. We begin with the Borromean rings $\calB$, shown on the right of Figure~\ref{fig:BorromeanDehnfilling}. 
Thurston proved that $S^3-\calB$ decomposes into the union of two ideal octahedra~\cite{thurstonsnotes}. We use here the decomposition described, for example in~\cite[Appendix]{Lackenby:AltVolume}; see also~\cite[Chapter~7]{PurcellHyperbolicKnotTheory}.

The diagram of the Borromean rings $\calB$ on the left of Figure~\ref{fig:idealpolyhedronlabelled} consists of two \emph{crossing circles} bounding (shaded) discs meeting the plane of projection transversely at right angles, and a \emph{knot strand} lying within the plane of projection. Slice along the projection plane, cutting the link complement into two identical halves. The shaded discs bounded by crossing circles have been cut in half; slice each shaded half-disc open up the middle like pita bread, and flatten onto the plane of projection. Then collapse the remnants of the link diagram to single ideal points. This gives two ideal octahedra, one of which is shown in Figure~\ref{fig:constructcusp2}.

\begin{figure}[ht]
    \centering
    \begin{subfigure}{.3\textwidth}
    \centering
     $\vcenter{\hbox{\begin{overpic}[scale=.85]{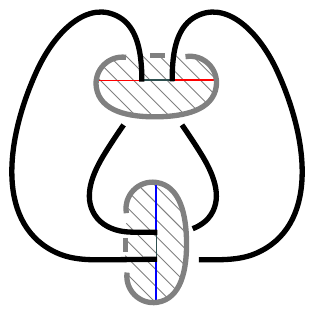}
      \put(67, 28){\contour{white}{$x$}}
     \put(61, 87){\contour{white}{$y$}}
     \put(45, 87){\contour{white}{$r$}}
     \put(78, 87){\contour{white}{$r'$}}
 \put(67, 14){\contour{white}{$p$}}
\end{overpic}}} $
    \caption{Borromean rings with shaded discs.} \label{fig:constructcusp1}
    \end{subfigure}
  \begin{subfigure}{.3\textwidth}
    \centering
     $\vcenter{\hbox{\begin{overpic}[scale=.75]{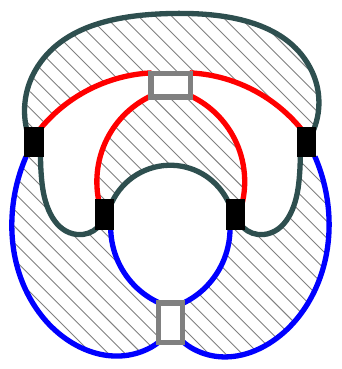}
      \put(100, 40){\contour{white}{$x$}}
       \put(18, 40){\contour{white}{$x$}}
     \put(60, 134){\contour{white}{$y$}}
       \put(60, 64){\contour{white}{$y$}}
     \put(30, 105){\contour{white}{$r$}}
     \put(90, 105){\contour{white}{$r'$}}
      \put(44, 80){\contour{white}{$r$}}
     \put(70, 80){\contour{white}{$r'$}}
 \put(107, 10){\contour{white}{$p$}}
       \put(10, 10){\contour{white}{$p$}}
\end{overpic}}} $
    \caption{Labeled octahedron with shaded faces.} \label{fig:constructcusp2}
    \end{subfigure}
    \begin{subfigure}{.3\textwidth}
    \centering
     $\vcenter{\hbox{\begin{overpic}[scale=.5]{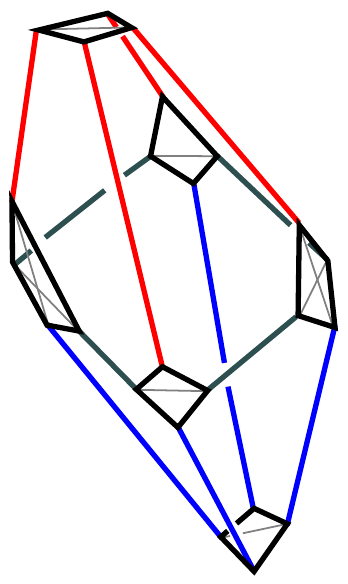}
     \put(26,55){$x$}
     \put(55,85){$x$}
      \put(13,78){$y$}
       \put(64,52){$y$}
       \put(-3, 110){$r$}
       \put(17, 106){$r$}
       \put(60, 110){$r'$}
        \put(29, 115){\contour{white}{$r'$}}
\put(20, 35){$p$}
         \put(60, 35){\contour{white}{$p$}}
\end{overpic}}} $
    \caption{Labelled triangulated truncated octahedron.} \label{fig:constructcusp3}
    \end{subfigure}
    \caption{Constructing the cusp neighbourhood.}
    \label{fig:idealpolyhedronlabelled}
\end{figure}

\subsection{Borromean rings: 8 tetrahedra}\label{Sec:8TetB}
A triangulation of the complement of the Borromean rings $S^3-\calB$ is obtained by subdividing each octahedron into four tetrahedra.
The face pairing information of the triangulation is given in Table~\ref{tab:triangulationofborromeanrings}, in the format of Regina~\cite{regina}. 
It was obtained by dropping an edge from one ideal vertex corresponding to a cusp in the knot strand to an opposite ideal vertex, also in the knot strand, and then stellating. This subdivides truncated ideal vertices as shown in Figure~\ref{fig:constructcusp3}. 

\begin{table}
  \centering
  \begin{tabular}{c||c|c|c|c}
    $\Delta$  & 012 & 013 & 023 & 123 \\
    \hline  \hline
    $\Delta_1^1 =$ 0 & 3(013) & 1(012) & 1(013) & 5(132) \\ \hline 
    $\Delta_2^1 =$ 1 & 0(013) & 0(023) & 2(021) & 4(132) \\ \hline
    $\Delta_3^1 =$ 2 & 1(032) & 3(023) & 3(021) & 6(321) \\ \hline
    $\Delta_4^1 =$ 3 & 2(032) & 0(012) & 2(013) & 7(132) \\ \hline
    $\Delta_1^2 =$ 4 & 7(013) & 5(012) & 5(013) & 1(132) \\ \hline
    $\Delta_2^2 =$ 5 & 4(013) & 4(023) & 6(021) & 0(132) \\ \hline
    $\Delta_3^2 =$ 6 & 5(032) & 7(023) & 7(021) & 2(321) \\ \hline
    $\Delta_4^1 =$ 7 & 6(032) & 4(012) & 6(013) & 3(132) 
  \end{tabular}
  \caption{Eight-tetrahedron triangulation of the Borromean rings complement.}
  \label{tab:triangulationofborromeanrings}
\end{table}

Under this triangulation, the three cusps of the Borromean rings inherit the cusp triangulation shown in Figure~\ref{fig:triangulation}. 

\begin{figure}
    \centering
    \begin{subfigure}{.3\textwidth}
    \centering
$\vcenter{\hbox{\begin{overpic}[scale=.65]{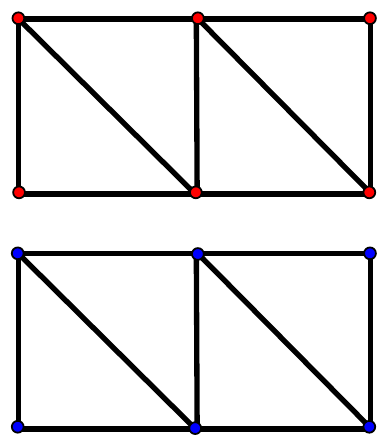}
      \put(3, 138){$r$}
      \put(3, 70){$r$}
       \put(60, 138){\contour{white}{$r'$}}
      \put(60, 70){\contour{white}{$r'$}}
      \put(114, 138){$r$}
      \put(114, 70){$r$}
       \put(15, 95){$\Delta_1^1$}
       \put(35, 115){$\Delta_2^1$}
        \put(70, 95){$\Delta_3^1$}
         \put(90, 115){$\Delta_4^1$}
      \put(15, 20){$\Delta_1^2$}
      \put(35, 40){$\Delta_2^2$}
       \put(70, 20){$\Delta_3^2$}
        \put(90, 40){$\Delta_4^2$}
       \put(64, 10){\contour{white}{$p$}}
      \put(53.5, 52){\contour{white}{$p$}}
\end{overpic}}} $
    \caption{Two cusp neighborhoods to be filled.} \label{fig:cusp1}
    \end{subfigure}
  \begin{subfigure}{.5\textwidth}
    \centering
     $\vcenter{\hbox{\begin{overpic}[scale=.65]{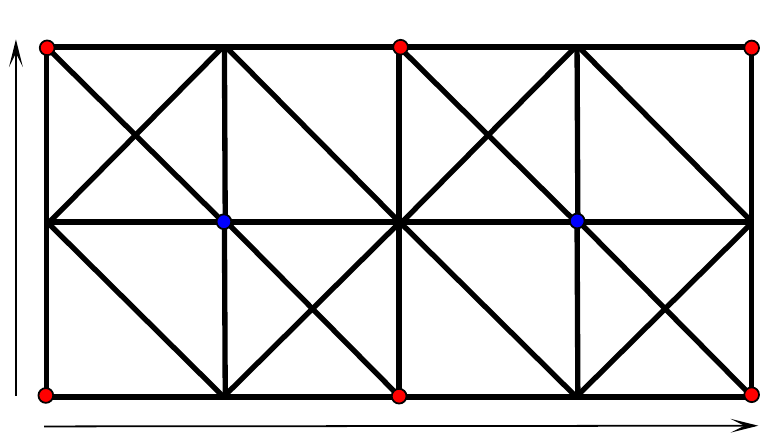}
     \put(12, 127){$r$}
     \put(-4, 125){$\mu$}
      \put(6, 60){$y$}
       \put(67, 127){\contour{white}{$x$}}
      \put(121, 127){$r'$}
       \put(179, 127){$x$}
    \put(231, 125){$r$}
    \put(121, 60){\colorbox{white}{\makebox(.7,3){$y$}}}
    \put(238, 60){$y$}
     \put(67, 60){\colorbox{white}{\makebox(.7,3){$p$}}}
      \put(238, 0){$\lambda$}
       \put(17, 94){$\Delta_1^1$}
        \put(100, 105){$\Delta_1^1$}
        \put(146, 110){$\Delta_1^1$}
        \put(34, 110){$\Delta_2^1$}
         \put(210, 105){$\Delta_2^1$}
         \put(128, 94){$\Delta_2^1$}
          \put(25, 25){$\Delta_3^1$}
        \put(106, 39){$\Delta_3^1$}
         \put(200, 20){$\Delta_3^1$}
          \put(90, 20){$\Delta_4^1$}
           \put(135, 25){$\Delta_4^1$}
           \put(216, 39){$\Delta_4^1$}
            \put(34, 75){$\Delta_2^2$}
            \put(51, 94){$\Delta_1^2$}
             \put(161, 94){$\Delta_2^2$}
    \put(80, 85){$\Delta_2^2$}
    \put(146, 75){$\Delta_1^2$}
    \put(190, 85){$\Delta_1^2$}
    \put(45, 50){$\Delta_3^2$}
    \put(155, 50){$\Delta_4^2$}
    \put(74, 39){$\Delta_4^2$}
    \put(183, 39){$\Delta_3^2$}
     \put(90, 55){$\Delta_3^2$}
      \put(200, 55){$\Delta_4^2$}
\end{overpic}}} $
    \caption{Labeled and triangulated knot strand cusp neighborhood.} \label{fig:cusp2}
    \end{subfigure}
    \caption{Cusp neighbourhoods of the Borromean rings, 8-tetrahedron triangulation.}
    \label{fig:triangulation}
\end{figure}

We will eventually wish to Dehn fill, replacing each crossing circle cusp with a solid torus. Removing the cusp can be achieved for one of the crossing circles by removing all four tetrahedra meeting the corresponding cusp. These four tetrahedra glue to form a region $T^2\times [0,\infty)$, with $T^2\times\{0\}$ containing two ideal points, forming a 2-punctured torus. Thus when we remove the tetrahedra, the cusp is removed, leaving only the 2-punctured torus boundary $T^2\times\{0\}$. We will (eventually) glue in a triangulated solid torus with a matching 2-punctured torus boundary; this will be described in Section~\ref{Sec:DehnFillingSolidTori}. 

Meanwhile, if we remove tetrahedra corresponding to both crossing circles of Figure~\ref{fig:BorromeanDehnfilling}, then we will be removing all eight tetrahedra, leaving only the 2-punctured tori boundary. These glue as shown in Figure~\ref{fig:gluingcrossingcirclecusps}. That is, there is a reflection in each square along a diagonal as illustrated in that figure; this is also described in~\cite[Lemma~6.2]{HPhighlytwisted}. We will need this information to attach solid tori correctly. 

\begin{figure}
  \centering
$\vcenter{\hbox{\begin{overpic}[scale=.65]{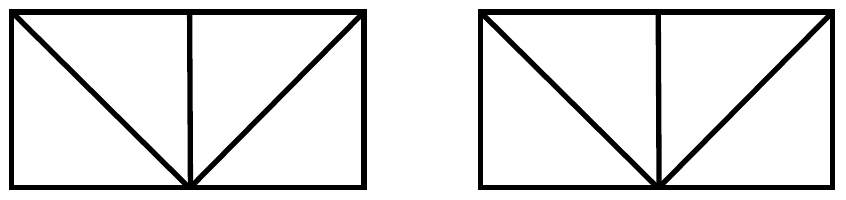}
      \put(25, 30){\contour{white}{$F_1$}}
       \put(80, 30){\contour{white}{$F_2$}}
        \put(175, 25){\rotatebox{90}{\reflectbox{\contour{white}{$F_1$}}}}
        \put(228, 35){\rotatebox{-90}{\reflectbox{\contour{white}{$F_2$}}}}
\end{overpic}}} $
    \caption{Gluing the crossing circle cusps of the Borromean rings.}
    \label{fig:gluingcrossingcirclecusps}
\end{figure} 

\subsection{Borromean rings: 10 tetrahedra}\label{Sec:10TetB}
To obtain a construction of the complement of $K(p,q)$ with smaller numbers of tetrahedra for large $p,q$, we start with a different triangulation of the Borromean rings complement $S^3-\calB$.  Each crossing circle cusp from the construction of Section~\ref{Sec:8TetB}
is triangulated by four tetrahedra with boundary forming a 2-punctured torus. Replace these four tetrahedra by five tetrahedra as follows. Remove the four tetrahedra. Attach a \emph{triangulated prism}: this is a thickened triangle, of the form $\mbox{Triangle}\times I$, with two of the faces $\mbox{Edge}\times I$ triangulated to match the faces of the 2-punctured torus, and the third $\mbox{Edge}\times I$ triangulated with an opposite diagonal, as in Figure~\ref{fig:triangulationtwoprism}~(A). This prism can be subdivided into three ideal tetrahedra. The third side $\mbox{Edge}\times I$ of the prism is now facing the cusp, and identifications on its edges glue it into a 1-punctured torus. To obtain the Borromean rings complement, attach to that two tetrahedra running into the cusp, with their bases shown as in Figure~\ref{fig:triangulationtwoprism}~(B). Note this is the triangulation of \cite[Proposition~2.4]{HPhighlytwisted}. 

\begin{figure}
    \centering
  \begin{subfigure}{.25\textwidth}
    \centering
     $\vcenter{\hbox{\begin{overpic}[scale=.65]{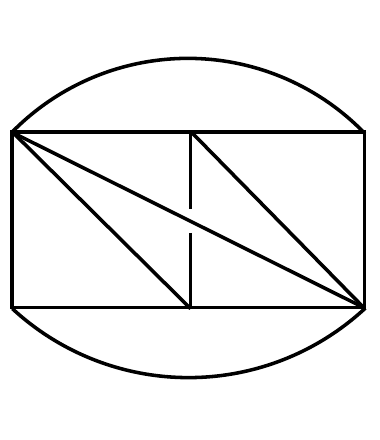}
     \put(15, 60){$\pmb 2$}
      \put(7, 45){\fontsize{7}{7}$3$}
      \put(42, 45){\fontsize{7}{7}$1$}
      \put(7, 80){\fontsize{7}{7}$2$}
     \put(70, 50){$\pmb 0$}
     \put(63, 80){\fontsize{7}{7}$3$}
      \put(89, 45){\fontsize{7}{7}$0$}
         \put(63, 45){\fontsize{7}{7}$2$}
     \put(95, 80){$\pmb 1$}
      \put(70, 89){\fontsize{7}{7}$1$}
      \put(107, 89){\fontsize{7}{7}$3$}
      \put(107, 55){\fontsize{7}{7}$2$}
     \put(40, 80){$\pmb 0$}
      \put(50, 89){\fontsize{7}{7}$3$}
       \put(23, 89){\fontsize{7}{7}$1$}
       \put(52, 52){\fontsize{7}{7}$2$}
     \put(57, 23){$\pmb 2$}
     \put(16, 34){\fontsize{7}{7}$3$}
     \put(99, 34){\fontsize{7}{7}$0$}
      \put(50, 34){\fontsize{7}{7}$1$}
     \put(57, 105){$\pmb 1$}
      \put(16, 99){\fontsize{7}{7}$0$}
      \put(99, 99){\fontsize{7}{7}$3$}
       \put(50, 99){\fontsize{7}{7}$1$}
\end{overpic}}} $
    \caption{Prism 1 replacing tetrahedra from the first crossing circle.} \label{fig:prism1}
    \end{subfigure}
     \begin{subfigure}{.21\textwidth}
    \centering
     $\vcenter{\hbox{\begin{overpic}[scale=.65]{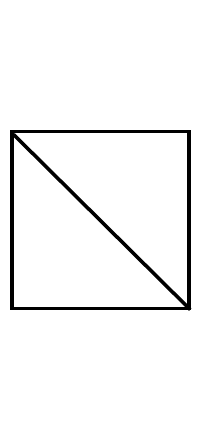}
    \put(20,60){$\pmb 6$}
      \put(8,45){\fontsize{7}{7}$2$}
     \put(42,45){\fontsize{7}{7}$3$}
     \put(8,80){\fontsize{7}{7}$1$}
      \put(33,73){$\pmb 7$}
     \put(51,89){\fontsize{7}{7}$1$}
     \put(51,55){\fontsize{7}{7}$2$}
     \put(14,89){\fontsize{7}{7}$3$}
\end{overpic}}} $
    \caption{Two tetrahedra glued on top of prism 1.} \label{fig:ontopofprism1}
    \end{subfigure}
      \begin{subfigure}{.25\textwidth}
    \centering
     $\vcenter{\hbox{\begin{overpic}[scale=.65]{Figures/prism.pdf}
     \put(15, 60){$\pmb 5$}
     \put(70, 50){$\pmb 3$}
     \put(95, 80){$\pmb 4$}
     \put(40, 80){$\pmb 3$}
     \put(57, 23){$\pmb 5$}
     \put(57, 105){$\pmb 4$}
     \put(7, 45){\fontsize{7}{7}$3$}
     \put(16, 34){\fontsize{7}{7}$3$}
      \put(99, 34){\fontsize{7}{7}$0$}
       \put(50, 34){\fontsize{7}{7}$1$}
     \put(42, 45){\fontsize{7}{7}$1$}
      \put(7, 80){\fontsize{7}{7}$2$}
      \put(16, 99){\fontsize{7}{7}$0$}
      \put(99, 99){\fontsize{7}{7}$3$}
       \put(50, 99){\fontsize{7}{7}$1$}
        \put(63, 80){\fontsize{7}{7}$1$}
        \put(50, 89){\fontsize{7}{7}$1$}
        \put(70, 89){\fontsize{7}{7}$1$}
        \put(23, 89){\fontsize{7}{7}$2$}
         \put(107, 89){\fontsize{7}{7}$3$}
          \put(107, 55){\fontsize{7}{7}$2$}
           \put(89, 45){\fontsize{7}{7}$0$}
           \put(52, 52){\fontsize{7}{7}$3$}
            \put(63, 45){\fontsize{7}{7}$3$}
\end{overpic}}} $
    \caption{Prism 2 replacing tetrahedra from the second crossing circle.} \label{fig:prism2}
    \end{subfigure}
     \begin{subfigure}{.21\textwidth}
    \centering
     $\vcenter{\hbox{\begin{overpic}[scale=.65]{Figures/prismontop.pdf}
    \put(20,60){$\pmb 8$}
     \put(8,45){\fontsize{7}{7}$2$}
     \put(42,45){\fontsize{7}{7}$3$}
     \put(8,80){\fontsize{7}{7}$1$}
      \put(33,73){$\pmb 9$}
     \put(51,89){\fontsize{7}{7}$1$}
     \put(51,55){\fontsize{7}{7}$2$}
     \put(14,89){\fontsize{7}{7}$3$}
\end{overpic}}} $
    \caption{Two tetrahedra glued on top of prism 2.} \label{fig:ontopofprism2}
    \end{subfigure}
    \caption{Replacing four tetrahedra per crossing circle. Here, the boldface numbers index the ideal tetrahedra.}
    \label{fig:triangulationtwoprism}
\end{figure}

When both crossing circle cusps are replaced by a triangulated prism capped off by two tetrahedra, the cusps are as shown in Figure~\ref{fig:triangulationtwoprism}, and the triangulation is given in Regina notation in Table~\ref{Tab:BorroTrianTwoPrisms}.
The new triangulation of the remaining knot strand cusp neighbourhood is illustrated in Figure~ \ref{fig:cuspntwoprismsml}. Note the two shaded hexagon regions: The edge at the centre of the hexagon runs to a crossing circle cusp.

\begin{table}
    \centering
    \begin{tabular}{c||c|c|c|c}
        $\Delta$  & 012 & 013 & 023 & 123 \\
       \hline \hline 
       $\Delta_0=0$ & 2(021) & 1(201) & 3(130) & 5(213) \\ \hline 
       $\Delta_1=1$ & 0(130) & 2(310) & 7(321) & 4(213) \\ \hline
       $\Delta_2=2$ & 0(021) & 1(310) & 6(312) & 3(321) \\ \hline
       $\Delta_3=3$ & 4(210) & 0(302) & 5(021) & 2(321) \\ \hline  
       $\Delta_4=4$ & 3(210) & 5(310) & 9(321) & 1(213) \\ \hline
       $\Delta_5=5$ & 3(032) & 4(310) & 8(312) & 0(213)  \\ \hline
        6 & 7(012) & 7(032) & 7(031) & 2(230) \\ \hline
        7 & 6(012) & 6(032) & 6(031) & 1(320) \\ \hline
        8 & 9(012) & 9(032) & 9(031) & 5(230) \\ \hline
        9 & 8(012) & 8(032) & 8(031) & 4(320)
    \end{tabular}
 \caption{10-tetrahedron triangulation of the Borromean rings complement. The labels $\Delta_n$ are used in Section~\ref{Sec:GeomTriang}. }
 \label{Tab:BorroTrianTwoPrisms}
\end{table}

\begin{figure}
   \centering
   $\vcenter{\hbox{\begin{overpic}[scale=1.2]{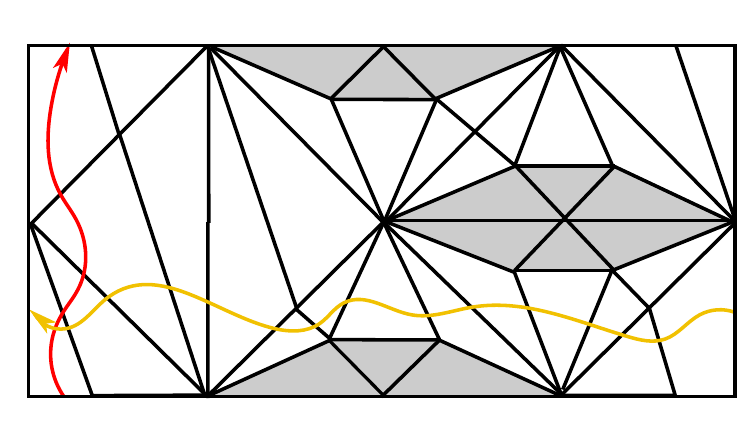}
    \put(14, 123){\contour{black}{\textcolor{white}{${\mathcal{O}_0^1}$}}}
    \put(113, 30){\contour{black}{\textcolor{white}{${\mathcal{O}_0^2}$}}}
    \put(113, 220)
    {\contour{black}{\textcolor{white}{${\mathcal{O}_0^2}$}}}
    \put(62, 173){\contour{black}{\textcolor{white}{$E_0$}}}
     \put(370, 72){\contour{black}{\textcolor{white}{$E_1$}}}
    \put(412, 125){\contour{black}{\textcolor{white}{${\mathcal{O}_0^1}$}}}
    \put(190, 193)
    {\contour{black}{\textcolor{white}{$\mathcal{P}^2$}}}
    \put(242, 193)
    {\contour{black}{\textcolor{white}{${\mathcal{O}_1^2}$}}}
    \put(290, 156)
    {\contour{black}{\textcolor{white}{${\mathcal{O}_1^1}$}}}
    \put(348, 155)
    {\contour{black}{\textcolor{white}{$\mathcal{P}^1$}}}
   \put(20, 40){${\pmb 2}(1)$}
    \put(20.5, 85){$0$}
    \put(55, 44){${\pmb 0}(2)$}
     \put(35, 88){$0$}
      \put(90, 30){$3$}
      \put(25, 200){${\pmb 2}(1)$}
       \put(53, 177){$2$}
        \put(21, 140){$3$}
        
         \put(65, 213){${\pmb 0}(2)$}
         \put(95, 215){$3$}
          \put(70, 189){$1$}
         \put(170, 214){\fontsize{8}{8}${\pmb 6}(2)$} 
    \put(205, 218){\fontsize{8}{8}$0$}
    \put(150, 218){\fontsize{8}{8}$1$}
    \put(188, 203){\fontsize{8}{8}$3$}
   \put(255, 214){\fontsize{8}{8}${\pmb 6}(1)$}
    \put(235, 218){\fontsize{8}{8}$0$}
    \put(290, 218){\fontsize{8}{8}$2$}
    \put(250, 204){\fontsize{8}{8}$3$}
    %
    \put(215, 207){\fontsize{8}{8}${\pmb 7}(3)$}
     \put(219.5, 216){\fontsize{8}{8}$0$}
     \put(234, 199){\fontsize{8}{8}$2$}
     \put(204, 199){\fontsize{8}{8}$1$}
      \put(170, 180){\fontsize{8}{8}${\pmb 2}(3)$}
      \put(148, 203){\fontsize{8}{8}$2$}
    \put(184, 189){\fontsize{8}{8}$0$}
    \put(200, 150){\fontsize{8}{8}$1$}
    \put(215, 170){\fontsize{8}{8}${\pmb 1}(0)$}
     \put(220, 140){\fontsize{8}{8}$1$}
      \put(202, 185){\fontsize{8}{8}$3$}
      \put(240, 185){\fontsize{8}{8}$2$}
      \put(245, 168){\fontsize{8}{8}${\pmb 0}(1)$}
      \put(240, 150){\fontsize{8}{8}$3$}
    \put(252, 181){\fontsize{8}{8}$0$}
    \put(263, 173){\fontsize{8}{8}$2$}
    \put(272, 196){\fontsize{8}{8}${\pmb 2}(2)$}
    \put(272, 182){\fontsize{8}{8}$1$}
    \put(264, 192){\fontsize{8}{8}$0$}
    \put(293, 205){\fontsize{8}{8}$3$}
    %
  \put(170, 30){\fontsize{8}{8}${\pmb 7}(2)$}
    \put(147, 27){\fontsize{8}{8}$1$}
    \put(207, 27){\fontsize{8}{8}$0$}
    \put(187, 47){\fontsize{8}{8}$3$}
 \put(159, 54){\fontsize{8}{8}${\pmb 1}(2)$}  
 \put(147, 43){\fontsize{8}{8}$3$}
 \put(179, 56){\fontsize{8}{8}$0$}
 \put(169, 65){\fontsize{8}{8}$1$}
 %
 \put(255, 30){\fontsize{8}{8}${\pmb 7}(1)$}
  \put(253, 47){\fontsize{8}{8}$3$}
  \put(233, 27){\fontsize{8}{8}$0$}
  \put(295, 27){\fontsize{8}{8}$2$}
  \put(217, 70){\fontsize{8}{8}${\pmb 2}(0)$}
   \put(242, 60){\fontsize{8}{8}$3$}
   \put(199, 60){\fontsize{8}{8}$2$}
   \put(219.5, 105){\fontsize{8}{8}$1$}
   \put(255, 70){\fontsize{8}{8}${\pmb 1}(3)$} 
   \put(236, 100){\fontsize{8}{8}$1$}
    \put(256, 58){\fontsize{8}{8}$0$}
    \put(295, 40){\fontsize{8}{8}$2$}
  \put(183, 78){\fontsize{8}{8}${\pmb 0}(0)$}  
  \put(202, 99){\fontsize{8}{8}$2$}
\put(186, 65){\fontsize{8}{8}$1$}
\put(176, 73){\fontsize{8}{8}$3$}
 \put(217, 42){\fontsize{8}{8}${\pmb 6}(3)$}
 \put(238, 49){\fontsize{8}{8}$2$}
 \put(200, 49){\fontsize{8}{8}$1$}
 \put(220, 30){\fontsize{8}{8}$0$}
 \put(358, 35){${\pmb 0}(3)$}
 \put(368, 55){$0$}
  \put(342, 28){$2$}
 \put(400, 40){${\pmb 1}(1)$}
  \put(413, 99){$3$}
  \put(384, 70){$2$}
   \put(358, 200){${\pmb 0}(3)$}
   \put(340, 215){$2$}
   \put(400, 155){$1$}
   \put(402, 210){${\pmb 1}(1)$}
   \put(415, 165){$0$}
   \put(55, 115){${\pmb 3}(3)$}
    \put(32, 120){$1$}
    \put(62, 157){$2$}
     \put(93, 58){$0$}
      \put(89, 150){${\pmb 5}(1)$}
      \put(110, 64){$0$}
       \put(106, 200){$3$}
        \put(77, 170){$2$}
     \put(128, 115){${\pmb 4}(1)$}
       \put(130, 55){$3$}
        \put(123, 185){$0$}
         \put(155, 75){$2$}
     \put(157, 150){${\pmb 3}(1)$} 
       \put(172, 87){$0$}
      \put(135, 195){$2$}
      \put(205, 123){$3$}
      \put(278, 132){\fontsize{8}{8}${\pmb 8}(1)$}
      \put(310, 129){\fontsize{8}{8}$0$}
       \put(250, 129){\fontsize{8}{8}$2$}
       \put(295, 146){\fontsize{8}{8}$3$}
         \put(360, 132){\fontsize{8}{8}${\pmb 8}(2)$}
        \put(393, 129){\fontsize{8}{8}$1$}
       \put(337, 129){\fontsize{8}{8}$0$}
	\put(355, 147){\fontsize{8}{8}$3$}
       %
     \put(278, 116){\fontsize{8}{8}${\pmb 9}(1)$}
      \put(310, 117){\fontsize{8}{8}$0$}
       \put(250, 117){\fontsize{8}{8}$2$}
	\put(293, 99){\fontsize{8}{8}$3$}
      %
      \put(360, 116){\fontsize{8}{8}${\pmb 9}(2)$}
        \put(393, 117){\fontsize{8}{8}$1$}
       \put(339, 117){\fontsize{8}{8}$0$}
\put(354, 101){\fontsize{8}{8}$3$}
       %
        \put(320, 145){\fontsize{8}{8}${\pmb 9}(3)$}
	  \put(324.5, 132){\fontsize{8}{8}$0$}
  	\put(310, 148){\fontsize{8}{8}$2$}
	  \put(340, 149){\fontsize{8}{8}$1$}
         \put(320, 105){\fontsize{8}{8}${\pmb 8}(3)$}
	  \put(324.5, 115){\fontsize{8}{8}$0$}
  	\put(308, 99){\fontsize{8}{8}$2$}
	  \put(340, 99){\fontsize{8}{8}$1$}
          \put(260, 152){\fontsize{8}{8}${\pmb 5}(2)$}
	       \put(248, 140){\fontsize{8}{8}$3$}
	   \put(283, 156){\fontsize{8}{8}$0$}
	   \put(273, 167){\fontsize{8}{8}$1$}
        \put(277, 80){\fontsize{8}{8}${\pmb 4}(3)$}  
	 \put(308, 45){\fontsize{8}{8}$1$}
	 \put(292, 89){\fontsize{8}{8}$0$}
	\put(240, 109){\fontsize{8}{8}$2$}
       \put(320, 75){\fontsize{8}{8}${\pmb 5}(0)$}
	 \put(303, 88){\fontsize{8}{8}$3$}
	\put(343, 88){\fontsize{8}{8}$2$}
	\put(323, 40){\fontsize{8}{8}$1$}
      \put(346, 65){\fontsize{8}{8}${\pmb 3}(0)$} 
	\put(338, 45){\fontsize{8}{8}$3$}
	\put(353, 85){\fontsize{8}{8}$2$}
	\put(363, 73){\fontsize{8}{8}$1$}
        \put(380, 100){\fontsize{8}{8}${\pmb 4}(2)$}
        \put(400, 108){\fontsize{8}{8}$3$}
        \put(361, 92){\fontsize{8}{8}$0$}
        \put(373, 83){\fontsize{8}{8}$1$}
         \put(289, 182){\fontsize{8}{8}${\pmb 3}(2)$}
         \put(309, 205){\fontsize{8}{8}$1$}
          \put(281, 175){\fontsize{8}{8}$3$}
          \put(293, 166){\fontsize{8}{8}$0$}
\put(320, 170){\fontsize{8}{8}${\pmb 4}(0)$}
 \put(306, 163){\fontsize{8}{8}$2$}
  \put(343, 162){\fontsize{8}{8}$3$}
\put(322, 205){\fontsize{8}{8}$1$}
\put(358, 170){\fontsize{8}{8}${\pmb 5}(3)$}
\put(335, 205){\fontsize{8}{8}$1$}
 \put(362, 159){\fontsize{8}{8}$0$}
 \put(395, 142){\fontsize{8}{8}$2$}
\end{overpic}}} $

    \caption{Cusp neighbourhood with chosen meridian and longitude. The shaded triangles correspond to the four tetrahedra that have three ideal vertices in the knot strand cusp, and their fourth vertex in a crossing circle. Such tetrahedra form two hexagons, one for each crossing circle.}
    \label{fig:cuspntwoprismsml}
\end{figure}

We explain notation. There are two edges of the triangulation that are disjoint from the hexagons, labelled $E_0$ and $E_1$ in Figure~\ref{fig:cuspntwoprismsml}. 
If we remove the two tetrahedra ``caps'' of Figure~\ref{fig:triangulationtwoprism}~(B) and ~(D), we obtain a triangulation with boundary two 1-punctured tori, corresponding to the two crossing circle cusps. This removes the shaded triangles of Figure~\ref{fig:cuspntwoprismsml}.

Each 1-punctured torus has an initial triangulation with edges corresponding to the slopes $\mu_0=1/0$, $\lambda_0=0/1$, and $\delta_0=-1/1$, where $\mu_0$ is a meridian of the crossing circle, equal to the intersection of a neighbourhood of the crossing circle with the projection plane, $\lambda_0$ is a homological longitude, equal to the intersection of the shaded disc in Figure~\ref{fig:idealpolyhedronlabelled}(A), and $\delta_0$ is a diagonal with negative slope. In the first 1-punctured torus, the slope $1/0$ is identified to an edge of the triangulation of the Borromean rings complement that we label $\calO_0^1$. The slope $-1/1$ is identified to an edge labelled $\calO_1^1$, and the slope $0/1$ to an edge labeled $\calP^1$. Here the superscripts indicate the crossing circle cusp. Similarly on the second 1-punctured torus, the slope $1/0$ is identified to the edge $\calO_0^2$, the slope $-1/1$ to edge $\calO_1^2$, and slope $0/1$ to $\calP^2$. 


\section{Dehn filling}\label{Sec:DehnFillingSolidTori}

Consider a complement of a link in $S^3$, $M = S^3 - L$, let $K$ be an unknotted link component of $L$, and let $\mu$ and $\lambda$ be the standard meridian and homological longitude of the tubular neighbourhood of $K$. Then the result of Dehn filling $K$ along any slope of the form $\mu + n \lambda $ for $n \in \mathbb{Z}$ yields a complement of a link in $S^3$. The slope may also be written as $1/n$.

\subsection{Layered solid tori.}
A layered solid torus is a triangulated solid torus whose boundary is a triangulated 1-punctured torus, triangulated by two ideal triangles; see~\cite{JRlayered}. It can be built from the outside in using the Farey graph, which encodes all triangulations of 1-punctured tori; see \cite{GScanonicaltriangulation}. We review the construction briefly.

The \textit{Farey triangulation} is visualised by viewing $\mathbb{H}^2$ in the disc model with points $\QQ\cup \{1/0\}$ labeled on $\bdy_\infty \HH^2$.
We place the antipodal pairs $\{0/1, 1/0\}$ and $\{1/1, -1/1\}$ in $\partial_\infty \mathbb{H}^2$ on a horizontal and vertical line through the centre of the disc, respectively. See Figure~\ref{fig:fareygraphexample} for an illustration. An edge of the Farey triangulation is an ideal geodesic drawn between each pair of points $ a/b, c/d \in \mathbb{Q} \cup \{ 1/0 \} \subset \partial_\infty \mathbb{H}^2$ whenever $|ab-bc| = 1$.

\begin{figure}
\centering
   $\vcenter{\hbox{\begin{overpic}[scale=.6]{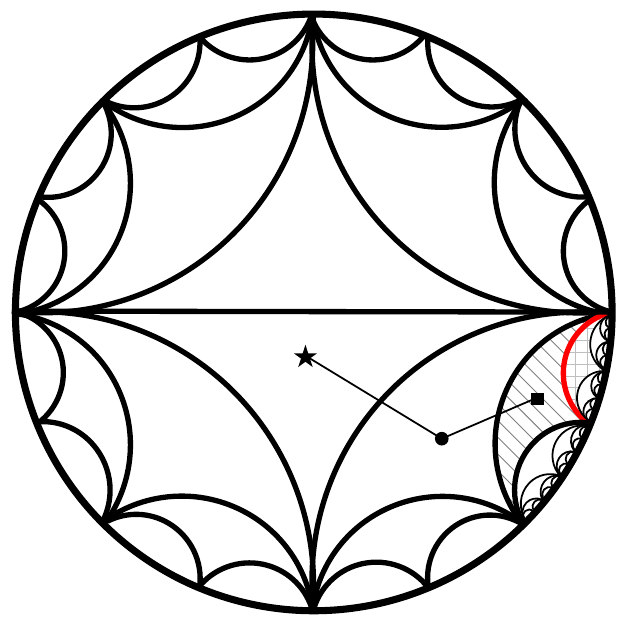}
\put(85,-7){$-1$}
\put(-15,88){$1/0$}
\put(180,88){$0$}
\put(87,180){$1$}
\put(180,72){$-1/4$}
\put(175,52){$-1/3$}
\put(157,22){$-1/2$}
\end{overpic}}} $
    \caption{Example of a walk through the Farey graph to construct a layered solid torus with Dehn filling slope $-1/4$.}
    \label{fig:fareygraphexample}
\end{figure}

On the torus, points of $\QQ\cup\{ 1/0\}$ represent slopes. For a 1-punctured torus, we require all slopes to meet the puncture point. Then triangles of the Farey triangulation correspond to triangulations of the 1-punctured torus, since two slopes with $|ad-bc|=1$ have geometric intersection number equal to one. A step in the Farey graph from one triangle $T_i$ to an adjacent triangle $T_{i+1}$ corresponds to changing the 1-punctured torus triangulation: Remove the slope represented by the vertex of $T_i$ that is not incident with the shared edge between $T_i$ and $T_{i+1}$, and replace it by the slope represented by the new vertex of $T_{i+1}$. The result is a diagonal exchange.

In three dimensions, switching slopes can be obtained by layering on a tetrahedron: a step in the Farey graph corresponds to placing a tetrahedron onto the triangulated torus such that the bottom two faces are covered by the tetrahedron, and the top two faces have swapped the edge.

When we Dehn fill, we start with a 3-dimensional triangulation with boundary a 1-punctured torus, and some initial 2-dimensional triangulation on this boundary. 
The slope of the Dehn filling $s$ will be some distance from this initial triangulation in the Farey triangulation. Starting at the centre of the initial triangulation in $\HH^2$, take a geodesic path through the Farey triangulation to the nearest triangle with one of the slopes the desired slope to be filled. Each time this geodesic passes out of one Farey triangle into another, the triangulation changes, and this is realised by layering on a tetrahedron for all but the very last triangle. Before stepping into the final triangle of the Farey graph, the desired meridian forms the diagonal that would be exchanged if we added one more tetrahedron. Instead of adding this tetrahedron, fold to identify the two triangles. This makes the desired meridian homotopically trivial; see, for example, \cite[Section~3]{HPhighlytwisted}.

\subsection{Construction of (conjecturally) minimal triangulation}\label{Sec:mintriangulation}
We will now construct explicitly a triangulation of $K(p,q)$, for $p,q\geq 2$, denoted by $\calT_{K(p,q)}$ and prove that it is geometric. 

Starting with the 10-tetrahedra triangulation of the Borromean rings complement of Section~\ref{Sec:10TetB}, fix one of the crossing circle cusps and remove the two tetrahedra meeting that cusp, leaving a triangulation with a boundary component consisting of a 1-punctured torus.

By construction, the slopes on the boundary are the standard meridian $1/0$, the homological longitude $0/1$, and the slope $-1/1$. Thus to attach a layered solid torus to the first crossing circle cusp, along slope $-1/p$, we follow the procedure:

\begin{enumerate}
\item If $p>2$, start in the triangle in the Farey triangulation with vertices $1/0$, $-1/1$, $0/1$ and step into the Farey triangle with vertices $-1/1$, $-1/2$, $0/1$. Layer a tetrahedron $\Delta_0^1$ onto the 1-punctured torus boundary corresponding to the first crossing circle. The edges of $\Delta_0^1$ are glued to the edge $\calO_0^1$ corresponding to slope $1/0$, the edge $\calO_1^1$ corresponding to slope $-1/1$, and $\calP^1$ corresponding to $0/1$. The edge $\calO_0^1$ is covered, and a new edge $\calO_2^1$ is produced. 

\item Proceed through the Farey triangulation through triangles with vertices $-1/(j-1)$, $-1/j$, $0/1$.  At each step add a tetrahedron $\Delta_{j-2}^1$ with edges corresponding to the previous and current Farey triangle. The edge $\calO_{j-2}^1$ will be covered, the edge $\calO_j^1$ produced, and the other edges of the tetrahedra will be glued to $\calO_{j-1}^1$ and $\calP^1$. 
 
\item Repeat until reaching the Farey triangle with slopes $-1/(p-2)$, $-1/(p-1)$, $0/1$. The final tetrahedron is $\Delta_{p-3}^1$ (since the edge $\calO_{p-3}^1$ is covered when it is attached). 

\item Fold along the edge $\calO_{p-2}^1$ of the boundary triangle associated to the slope $-1/(p-2)$. As a result, the diagonal slope $-1/p$ will become homotopically trivial, yielding the appropriate Dehn filling. Note that because of the folding, the edge $\calO_{p-1}^1$ is identified with $\calP^1$. 
\end{enumerate}

In the cusp triangulation shown in Figure~\ref{fig:cuspntwoprismsml}, which corresponds to the knot strand cusp that is not filled, 
this will remove one of the shaded hexagons and replace it by a hexagon corresponding to the ideal point of the newly attached layered solid torus. Figure~\ref{fig:MinConstructionHexagon}~(A) shows how the hexagon changes when layering in the first two tetrahedra. Figure~\ref{fig:MinConstructionHexagon}~(B) shows the effect of folding after layering in those two tetrahedra; this corresponds to Dehn filling slope $-1/4$. 

\begin{figure}
  \centering
    \begin{subfigure}{.47\textwidth}
    \centering
 \resizebox{\columnwidth}{!}{$\vcenter{\hbox{\begin{overpic}[scale=1.2]{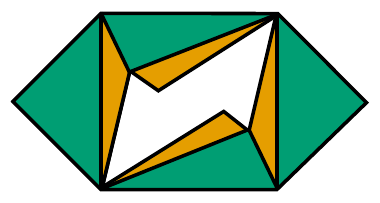}
     \put(25,55){$\Delta_{0}^1$}
    \put(80,95){$\Delta_0^1$}
    \put(170,55){$\Delta_0^1$}
    \put(115,15){$\Delta_0^1$}
 \put(0, 56){\contour{white}{${\mathcal{O}_0^1}$}}
  \put(50, 107){\contour{white}{${\mathcal{O}_1^1}$}}
 \put(69, 74){\contour{white}{${\mathcal{O}_2^1}$}}
 \put(154, 107){\contour{white}{${\mathcal{P}_{1}}$}}
  \put(85, 61){\contour{white}{${\mathcal{O}_3^1}$}}
\end{overpic}}}$}
    \caption{layering.} \label{fig:layering}
    \end{subfigure}  \quad
  \begin{subfigure}{.47\textwidth}
    \centering
 \resizebox{\columnwidth}{!}{$\vcenter{\hbox{\begin{overpic}[scale=1.2]{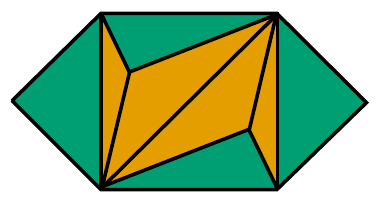}
     \put(25,55){$\Delta_0^1$}
    \put(80,95){$\Delta_0^1$}
    \put(170,55){$\Delta_0^1$}
    \put(115,15){$\Delta_0^1$}
\end{overpic}}}$}
    \caption{Folding.} \label{fig:folding}
    \end{subfigure}
  
    \caption{Layering in the 1-punctured torus.}
    \label{fig:MinConstructionHexagon}
\end{figure}

Repeat for the second crossing circle. The process attaches tetrahedra $\Delta_0^2, \dots, \Delta_{q-2}^2$, and introduces edges $\calO_2^2, \dots, \calO_{q-2}^2$. The result is the triangulation of the Dehn filling $\calT_{K(p,q)}$.

Figure~\ref{fig:exampleK33} shows an example for $p=3$ and $q=3$. We layer one tetrahedron per crossing circle, then fold. The resulting triangulation is shown in the figure.
\begin{figure}[ht]
\centering
$\vcenter{\hbox{\begin{overpic}{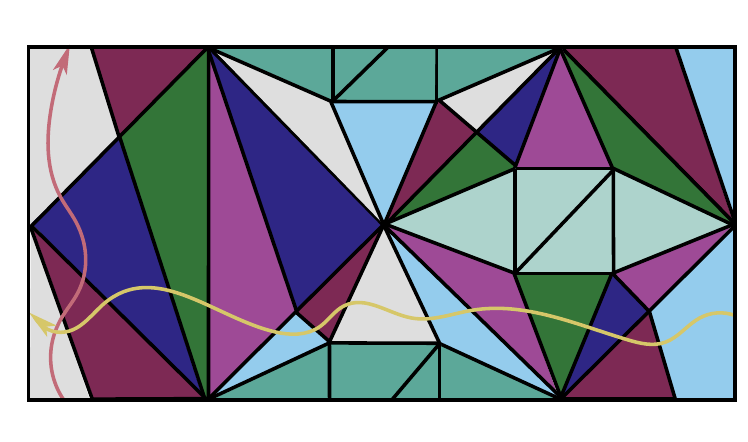}
\end{overpic}}} $
 \caption{Triangulation of $S^3 - K_{3,3}$ with meridian and longitude.}
    \label{fig:exampleK33}
\end{figure}

\subsection{Construction of canonical triangulation}\label{Sec:CanonicalConstruction}

We now construct a different triangulation of the double twist knots. This construction yields the canonical triangulation, also known as the Sakuma--Weeks triangulation~\cite{SWcanonicaltriangulations}. The fact that the Sakuma--Weeks triangulation is canonical follows from work of Gu{\'e}ritaud and Futer~\cite[Appendix~A]{GueritaudFuter:Canonical}; see also \cite{ASWY}. Our construction is described differently, via Dehn filling the Borromean rings, triangulated as in Section~\ref{Sec:8TetB}. Ham has recently shown that all 3-manifolds obtained by more general Dehn filling this triangulation of the Borromean rings complement are canonical~\cite{ham2023canonical}; ours is an instance of her construction. Since the canonical triangulation is unique, these triangulations must agree. 

Start with the 8-tetrahedra triangulation of the Borromean rings complement of Section~\ref{Sec:8TetB}, and remove four tetrahedra corresponding to one of the crossing circle cusps. Recall this gives a manifold with boundary a 2-punctured torus.

Consider the slope $-1/p = \mu - p \lambda$ on the twice punctured torus where $\mu$ and $\lambda$ are the meridian and homological longitude. Consider the lift of the twice punctured torus to the infinite cover $\mathbb{R}^2$, where the punctures are located at the integral points $\mathbb{Z}^2 \subset \mathbb{R}^2$. In this cover, $\mu$ lifts to the line segment from $(0, 0)$ to $(0, 1)$, and $\lambda$ lifts to the line segment from $(0, 0)$ to $(2, 0)$.  Then the slope $ \mu - p \lambda$ lifts to an arc with end points at $(0, 0)$ and $(-2p, 1)$. Furthermore, the lift of the slope $-1/p$ will only meet the integral points $\mathbb{Z}^2$ at its endpoints.

The solid torus that we glue in this setting is the double cover of a layered solid torus:

\begin{lemma}[LST double cover]\label{lemma:HP}
Let $m = -1/p = -\mu+p \lambda$ be a slope on a torus with generators $\mu, \lambda$ and $p \notin \{0, \pm 1\}$. Construct a layered solid torus $X$ by beginning in the Farey triangle with vertices $(1/0, 0/1, -1/1)$ and step to the triangle with slope $-1/2p$.
Then, the double cover of $X$, denoted $Y$, satisfies the following properties.
\begin{enumerate}
\item The boundary of $Y$ is a 2-punctured torus that is triangulated by four ideal triangles. The basis slope $\lambda$ lifts to run from $(0,0)$ to $(2, 0)$ in $\mathbb{R}^2$, and projects to run twice around the slope $0/1$ in $\partial X$. The slope $\mu$ lifts to run from $(0,0)$ to $(0,1)$ in $\mathbb{R}^2$. Since $m$ is negative, the diagonals of the triangulation of $\partial Y$ have negative slope.
\item The meridian of $Y$ is the slope $m= - \mu +p \lambda$. 
\end{enumerate} 
\end{lemma}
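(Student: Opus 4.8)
The plan is to prove the two properties by tracking a Farey-graph construction in the double cover, comparing it to the downstairs layered solid torus $X$. First I would set up the double cover explicitly. The solid torus $X$ has boundary a $1$-punctured torus whose infinite cyclic cover is $\RR^2$ with punctures at $\ZZ^2$; the double cover $Y$ we want is the one whose boundary $2$-punctured torus has puncture set matching $\ZZ^2 \subset \RR^2$, with $\lambda$ lifting to the segment from $(0,0)$ to $(2,0)$ and $\mu$ lifting to $(0,0)$ to $(0,1)$. The covering $\partial Y \to \partial X$ is the double cover in the $\lambda$-direction, so the slope $0/1$ on $\partial X$ is covered twice, which gives the first assertion of (1). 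The key observation is that each diagonal exchange in the Farey graph downstairs lifts to \emph{two} diagonal exchanges upstairs, so a single layered tetrahedron downstairs corresponds to two tetrahedra in $Y$; this is what doubles the triangle count from two to four and is the mechanism behind ``double cover of a layered solid torus.''

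The main step is to verify the slope identifications. The construction of $X$ begins in the Farey triangle $(1/0,\,0/1,\,-1/1)$ and runs to the triangle containing $-1/2p$, and the meridian of $X$ (the slope that becomes homotopically trivial under the fold) is therefore $-1/2p$. Upstairs, I would show that because $Y$ is the double cover in the $\lambda$-direction, a slope $a/b$ on $\partial X$ lifts to the slope $a/(2b)$ read in the coordinates where $\lambda$ has length $2$. Thus the downstairs meridian $-1/2p$ pulls back to the slope with lift from $(0,0)$ to $(-2p,1)$, which is exactly $m = -\mu + p\lambda$ by the lift description given just before the lemma. This establishes (2). The negativity of the diagonals in (1) then follows because $m$ is negative: the whole Farey walk stays on the negative side, so every diagonal introduced has negative slope, matching the picture of the $2$-punctured torus lift.

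For the remaining part of (1), that $\partial Y$ is triangulated by four ideal triangles, I would argue that the initial Farey triangle $(1/0, 0/1, -1/1)$ for $X$ lifts, under the doubling in the $\lambda$-direction, to a triangulation of the $2$-punctured torus by four triangles (two triangles upstairs over each of the two triangles of the $1$-punctured torus), and that the folding operation that closes off $X$ lifts to the corresponding folding closing off $Y$, preserving the four-triangle boundary. The cleanest way to see this is to track the construction entirely in the universal cover $\RR^2$: the punctures at $\ZZ^2$, the meridian arc from $(0,0)$ to $(0,1)$, the longitude from $(0,0)$ to $(2,0)$, and the slope $m$ as the arc to $(-2p,1)$ meeting $\ZZ^2$ only at endpoints, exactly as stated in the paragraph preceding the lemma.

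The hard part will be making the correspondence between the downstairs Farey walk and the upstairs triangulation genuinely rigorous rather than merely pictorial: one must check that the involution generating the double cover is compatible with every layering step, i.e.\ that it commutes with each diagonal exchange and sends the folded edge to the folded edge, so that the covering translation of $\partial Y$ descends to a well-defined map on $\partial X$ at every stage. Once that compatibility is established, both (1) and (2) follow by keeping careful track of the slope correspondence $a/b \mapsto a/(2b)$ and the fact that the meridian of a layered solid torus is precisely the slope folded at the final step.
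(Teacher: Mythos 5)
Your proposal is essentially correct in outline, but it takes a genuinely different route from the paper: the paper's entire proof of Lemma~\ref{lemma:HP} is a one-line citation to \cite[Lemma~7.1]{HPhighlytwisted} (with the parameter $\ell$ there set to $1$), whereas you reconstruct the argument directly by tracking the Farey walk equivariantly in the double cover. Your version is self-contained and makes the slope bookkeeping explicit --- the identification of the deck transformation as the doubling in the $0/1$-direction, the doubling of triangles and of layered tetrahedra, and the pullback of the folded slope $-1/(2p)$ to $m=-\mu+p\lambda$ --- which is exactly the content that the cited Ham--Purcell lemma establishes; the paper's citation simply offloads the one genuinely technical point you flag yourself, namely that the covering involution commutes with each diagonal exchange and with the final fold, so that the layered structure downstairs lifts coherently. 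Two small corrections: your general slope-transformation rule ``$a/b$ on $\partial X$ lifts to $a/(2b)$'' is stated backwards --- since the upstairs longitude is \emph{twice} the downstairs $0/1$ curve, a downstairs slope $a\mu_X+b\lambda_X$ becomes $a\mu+(b/2)\lambda$ upstairs, i.e.\ $a/b\mapsto 2a/b$, which is what your subsequent concrete computation ($-1/(2p)\mapsto -1/p$, the arc from $(0,0)$ to $(-2p,1)$) actually uses, so only the displayed formula needs fixing; and you should note explicitly that the downstairs meridian curve lifts to a \emph{closed} curve (rather than a connected double cover of itself) precisely because its $\lambda_X$-coordinate $2p$ is even, which is why the meridian disk of $X$ lifts to two meridian disks of $Y$ and the meridian slope of $Y$ is a single lift of $-1/(2p)$. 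With those points tightened, your argument is a valid direct proof of the statement the paper imports from \cite{HPhighlytwisted}.
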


\begin{proof}
This is \cite[Lemma~7.1]{HPhighlytwisted}, with $\ell$ replaced by $1$. 
\end{proof}

Consider the knot strand cusp neighbourhood of the Borromean rings illustrated in Figure~\ref{fig:triangulation}~(B). By removing the tetrahedra formed from the first crossing circle, we obtain a manifold $M$ with a twice-punctured torus as its boundary. By Lemma~\ref{lemma:HP} we construct the double cover $Y_1$ of a layered solid torus $X_1$. To obtain meridian $-1/p$, construct $X_1$ by starting in the Farey triangle with slopes $(1/0, -1/1, 0/1)$ and ending in the Farey triangle $(-1/(2p-2), -1/(2p-1), 0/1)$, and then folding. Take $Y_1$ to be the double cover. Repeat the process for the second crossing circle and slope $-1/q$ to obtain the double cover $Y_2$ of a solid torus $X_2$, and glue $Y_1$ and $Y_2$ to form a triangulation of the complement of the knot $K(p,q)$.

Two edges on the boundary of the solid torus $Y_1$ cover the slope $1/0$ in the layered solid torus $X_1$ of Lemma~\ref{lemma:HP}. These both glue to the same edge $\calP^2$ in the knot complement. The two slopes of $Y_2$ projecting to $0/1$ in $X_2$ also glue to $\calP^2$. Slopes on $Y_2$ projecting to $1/0$ in $X_2$ and slopes on $Y_1$ projecting to $0/1$ on $X_1$ all glue to edge $\calP^1$ in the knot complement. Finally, the diagonal slopes on $Y_1$ glue to an edge $\mathcal{O}_{1,1}$, and diagonal slopes on $Y_2$ glue to an edge $\mathcal{O}_{1,2}$. Within the double cover $Y_i$ of the solid torus $X_i$, $i=1,2$, there will be two edges corresponding to slope $-1/j$ for $j=1, \dots, p-2$ or $q-2$. In the knot complement, these glue to distinct edges if $j\geq 2$. Denote them by $\calO_{j,1}^i$ and $\calO_{j,2}^i$. Thus superscript indicates cusp, and subscript indicates slope and edge of the double cover. 

Because this is a double cover, each step in the Farey triangulation produces two tetrahedra. Moreover, for slope $-1/p$, we must walk to $-1/(2p)$ in the Farey triangulation. Thus each step that produced only one tetrahedron in the previous construction produces four tetrahedra in this one. 

We illustrate a few steps. The first layering of tetrahedra is shown in Figure~\ref{fig:CanonConstructionFirstLayer}, where $(\pmb i)_k^j$ denotes the tetrahedron glued onto the edge $\mathcal{O}_{i,k}^j$ in the $j^{th}$ cusp, $j=\{1,2\}$. Tetrahedron $(\pmb 0)^j_1$ covers $\mathcal{O}_{0,1}^j$ and tetrahedron $(\pmb 0)_2^j$ covers $\mathcal{O}_{0,2}^j$. Both tetrahedra meet at edges $\mathcal{O}_{1,1}$ and $\mathcal{O}_{1,2}$ and produce new edges: $\mathcal{O}_{2,1}^j$ and $\mathcal{O}_{2,2}^j$, respectively. Edges of each tetrahedron are glued to the edge labelled $\calP_j$. 

\begin{figure}[ht]
  \centering
    \begin{subfigure}{.35\textwidth}
    \centering
$\vcenter{\hbox{\begin{overpic}{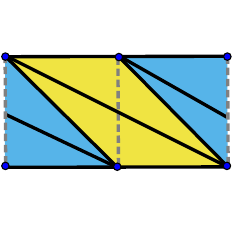}
          \put(5, 53){\contour{white}{$(\pmb 0)_1^2$}}
          \put(55, 53){\contour{white}{$(\pmb 0)_2^2$}}
\end{overpic}}} $
    \caption{First layer.} 
    \end{subfigure}
  \begin{subfigure}{.5\textwidth}
    \centering
$\vcenter{\hbox{\begin{overpic}{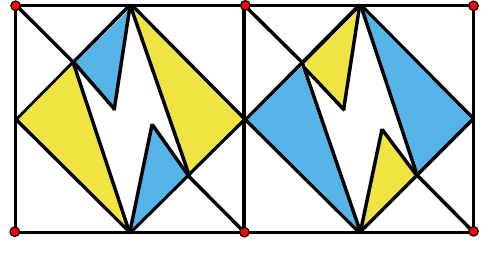}
      \put(17, 97){\contour{white}{$\mathcal{O}_{1,1}$}}
        \put(90, 30){\contour{white}{$\mathcal{O}_{1,2}$}}
        \put(200, 30){\contour{white}{\textcolor{gray}{$\mathcal{O}_{1,2}$}}}
       \put(127, 97){\contour{white}{\textcolor{gray}{$\mathcal{O}_{1,1}$}}}
       \put(18, 65){\contour{white}{$(\pmb 0)_2^2$}}
       \put(86, 65){\contour{white}{$(\pmb 0)_2^2$}}
       \put(129, 65){\contour{white}{$(\pmb 0)_1^2$}}
       \put(197, 65){\contour{white}{$(\pmb 0)_1^2$}}
        \put(152, 92){\fontsize{8}{8}\contour{white}{$(\pmb 0)_2^2$}}
       \put(180, 36){\fontsize{8}{8}\contour{white}{$(\pmb 0)_2^2$}}
        \put(42, 92){\fontsize{8}{8}\contour{white}{$(\pmb 0)_1^2$}}
       \put(70, 37){\fontsize{8}{8}\contour{white}{$(\pmb 0)_1^2$}}
\put(60, 120){\contour{white}{$\mathcal{P}_{2}$}}
\put(170, 120){\contour{white}{\textcolor{gray}{$\mathcal{P}_{2}$}}}
\end{overpic}}} $
    \caption{First layering from the second crossing circle.} 
    \end{subfigure}
    \caption{Layering in the 2-punctured torus.}
    \label{fig:CanonConstructionFirstLayer}
\end{figure}

The second layering is illustrated in Figure~\ref{fig:CanonConstructionSecondLayer}. For $j=\{1,2\}$, tetrahedron $(\pmb 1)^j_1$ is layered over $\mathcal{O}_{1,1}^j$ and tetrahedron $(\pmb 1)_2^j$ is layered over $\mathcal{O}_{1,2}^j$. Both tetrahedra meet at edges $\mathcal{O}_{2,1}^j$ and $\mathcal{O}_{2,2}^j$ and produce new edges: $\mathcal{O}_{3,1}^j$ and $\mathcal{O}_{3,2}^j$, respectively. Edges of the tetrahedra are glued to the edge labelled $\mathcal{P}_j$.

\begin{figure}[ht]
   \centering
    \begin{subfigure}{.33\textwidth}
    \centering
$\vcenter{\hbox{\begin{overpic}{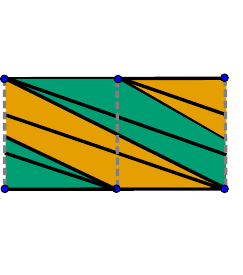}
      \put(18, 65){\contour{white}{$(\pmb 1)_1^2$}}
       \put(70, 65){\contour{white}{$(\pmb 1)_2^2$}}
\end{overpic}}} $ 
    \caption{Second layer.} 
    \end{subfigure}
  \begin{subfigure}{.55\textwidth}
    \centering
$\vcenter{\hbox{\begin{overpic}{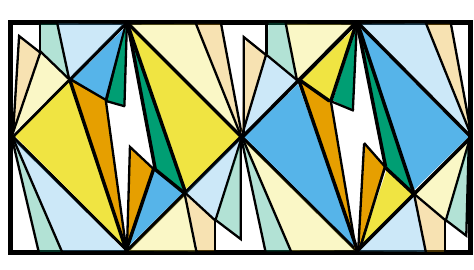}    
      \put(39, 70){\contour{white}{\fontsize{8}{8}$(\pmb 1)_1^2$}}
        \put(60, 41){\contour{white}{\fontsize{8}{8}$(\pmb 1)_1^2$}}
  \put(150, 70){\contour{white}{\fontsize{8}{8}$(\pmb 1)_1^2$}}
        \put(172, 41){\contour{white}{\fontsize{8}{8}$(\pmb 1)_1^2$}}
\put(58, 120){\contour{white}{$\mathcal{P}_{2}$}}
\put(168, 120){\contour{white}{\textcolor{gray}{$\mathcal{P}_{2}$}}}
  
 \put(51, 83){\contour{white}{\fontsize{8}{8}$(\pmb 1)_2^2$}}
       \put(160, 87){\contour{white}{\fontsize{8}{8}$(\pmb 1)_2^2$}}
 \put(180, 67){\contour{white}{\fontsize{8}{8}$(\pmb 1)_2^2$}}
 \put(69, 67){\contour{white}{\fontsize{8}{8}$(\pmb 1)_2^2$}}
\end{overpic}}} $
    \caption{Second layering from the second crossing circle.}
    \end{subfigure}

    \caption{Layering in the twice-punctured torus.}
    \label{fig:CanonConstructionSecondLayer}
\end{figure}

The layering pattern continues until tetrahedra $(\pmb{2p-3})_1^j$ and $(\pmb{2p-3})_2^j$ are layered on, each with an edge gluing to $\calP_j$. 

Once tetrahedra $(\pmb{2p-3})_1^j$ and $(\pmb{2p-3})_2^j$ are layered on, fold the triangular faces of the boundary torus along two of its edges to make the curve of slope $-1/p$ homotopically trivial. For cusp $j \in \{ 1,2\}$, the triangular faces of the boundary torus are faces of tetrahedra $(\pmb{2p-3})_1^j$ and $(\pmb{2p-3})_2^j$ with edges $\{ \mathcal{O}_{2p-2,i}^j, \mathcal{O}_{2p-1,i}^j, \mathcal{P}_j \}$ for $i=1$ or $2$, respectively. We fold across the edges $ \mathcal{O}_{2p-2,1}^j$ and $\mathcal{O}_{2p-2,2}^j$. As a consequence, edges $\mathcal{O}_{2p-1,1}^j$ and $\mathcal{O}_{2p-1,2}^j$ are identified with edge $\mathcal{P}_j$. An illustration for $p=2$ is given in Figure~\ref{fig:CanonFoldingExample}.

\begin{figure}[ht]
\centering
$\vcenter{\hbox{\begin{overpic}{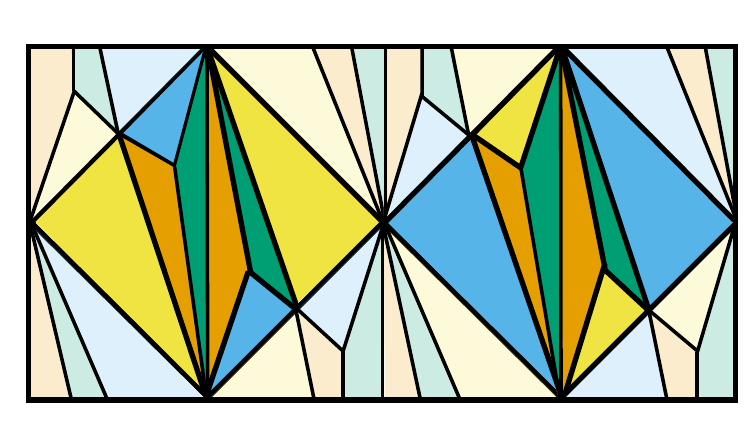}
\end{overpic}}} $
 \caption{Example of folding for $p=2$.}
    \label{fig:CanonFoldingExample}
\end{figure}

\section{Geometric triangulations}\label{Sec:GeomTriang}

Now return to the triangulation $\calT_{K(p,q)}$ of Section~\ref{Sec:mintriangulation}. In this section, we prove that $\calT_{K(p,q)}$ is geometric, meaning the tetrahedra in the construction inherit positively oriented convex hyperbolic structures from the complete hyperbolic structure on the knot complement. The main result is the following.

\begin{theorem}\label{Thm:geometric}
For every $p,q\geq 2$, the ideal triangulation $\calT_{K(p,q)}$ of the complement of $K(p,q)$ constructed in Section~\ref{Sec:mintriangulation} is geometric.
\end{theorem}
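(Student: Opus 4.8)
The plan is to prove geometricity by the variational method of Casson and Rivin: exhibit an angle structure on $\calT_{K(p,q)}$, and then show that the hyperbolic volume functional, which is strictly concave on the (bounded, convex) space $\mathcal{A}(\calT_{K(p,q)})$ of angle structures, attains its maximum in the interior. By Rivin's theorem an interior critical point of volume is realised by a genuine complete hyperbolic structure with every tetrahedron positively oriented, which is exactly the assertion that $\calT_{K(p,q)}$ is geometric (and, by Mostow rigidity, that it realises the complete structure).

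First I would produce a single point of $\mathcal{A}(\calT_{K(p,q)})$. The triangulation splits into the six prism tetrahedra $\Delta_0,\dots,\Delta_5$ surviving from the $10$-tetrahedron triangulation of $S^3-\calB$ of Table~\ref{Tab:BorroTrianTwoPrisms} after the four caps are removed, together with the layered tetrahedra $\Delta_0^1,\dots,\Delta_{p-3}^1$ and $\Delta_0^2,\dots,\Delta_{q-3}^2$ introduced in Section~\ref{Sec:mintriangulation}. As an initial feasible guess I would take the dihedral angles inherited from Thurston's two regular ideal octahedra on the prism tetrahedra, and the Guéritaud--Futer angled layered solid torus angles on the two filling pieces, whose values are prescribed by the geodesic Farey path from the triangle $(1/0,-1/1,0/1)$ out to the slope $-1/p$ (resp.\ $-1/q$) and whose terminal fold encodes the Dehn filling; see \cite{GueritaudFuter:Canonical}. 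I would then verify the two conditions defining an angle structure: that all angles are positive, and that the angles around each edge sum to $2\pi$. The edge-sum check is a finite bookkeeping computation, organised by edge class using the gluing data of Table~\ref{Tab:BorroTrianTwoPrisms} and the cusp picture of Figure~\ref{fig:cuspntwoprismsml}: the interior edges $E_0,E_1$ and $\calP^i$ of the Borromean part, the slope-identification edges $\calO_j^i$ running through the layered solid torus, and the edge created by the fold. If the naive guess fails the edge equations exactly, the equations are linear, so I would solve for a positive solution near it.

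With $\mathcal{A}(\calT_{K(p,q)})\neq\emptyset$ in hand, geometricity reduces to showing that the volume maximiser does not escape to the boundary $\partial\mathcal{A}(\calT_{K(p,q)})$, where some tetrahedron degenerates to a zero angle. This is the main obstacle. The standard transversality argument gives leverage: since $\frac{d}{dx}\Lambda(x)=-\log|2\sin x|\to+\infty$ as $x\to0^+$, the volume increases with infinite slope as one moves off any face of $\partial\mathcal{A}$ along which a single tetrahedron flattens, so a boundary maximum is possible only along a face where a whole coherent family of tetrahedra degenerates simultaneously. Following the Guéritaud--Futer analysis, I would rule such families out using the explicit combinatorics: the layered tetrahedra cannot all flatten because the Farey path is a geodesic, so the layered shapes vary monotonically down the tube, while the prism tetrahedra are pinned near their octahedral angles; any admissible deformation increasing the flattened angles stays inside $\mathcal{A}$, contradicting maximality on the boundary.

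I expect the genuinely delicate cases to be the extremes $p=2$ or $q=2$, where a filling piece has no interior layering tetrahedra and consists only of a fold, so that the edges $\calO_{p-1}^i=\calP^i$ produced by the fold receive angle contributions from both the Borromean part and the solid torus simultaneously and the above monotonicity argument is empty on that side. These few cases I would dispatch by a direct hand computation of the edge equations and positivity, cross-validated against the census manifolds $s648$, $v2284$, $t05803$, $t07682$ and $o9\_20397$ using SnapPy. Assembling the uniform argument for $p,q\geq 3$ with the explicit checks for the boundary values $p=2$ or $q=2$ then yields the theorem for all $p,q\geq2$.
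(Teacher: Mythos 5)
Your overall strategy is the same as the paper's: both proofs are applications of the Casson--Rivin programme, producing a point of $\scrA(\calT_{K(p,q)})$ and then showing the volume functional cannot be maximised on the boundary of $\overline{\scrA(\calT_{K(p,q)})}$. Your first step is fine in outline (the paper's Lemma~\ref{Lem:nonempty} does exactly this, by writing down explicit angles in terms of small parameters $\epsilon,\delta$ and verifying the gluing equations by substitution), although "if the naive guess fails the edge equations exactly, solve for a positive solution near it" is not automatic --- a nearby solution of the linear system need not have all angles positive, so at some point you must actually exhibit and check a solution, as the paper does.

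The genuine gap is in your second step, ruling out a boundary maximiser. Your claim that ``the layered tetrahedra cannot all flatten because the Farey path is a geodesic'' is false, and in fact contradicts the key fact the paper uses: by \cite[Lemma~4.18]{HPhighlytwisted} (the paper's Lemma~\ref{LSTflat}), if the volume functional restricted to a layered solid torus is maximised on the boundary of its angle-structure space, then \emph{every} tetrahedron of that layered solid torus is taut. So an entire filling piece certainly can degenerate coherently; the geodesity of the Farey path does not prevent it. Likewise there is no a priori reason the maximiser keeps the six tetrahedra $\Delta_0,\dots,\Delta_5$ ``pinned near their octahedral angles'' --- that is an assertion about the unknown maximiser, not something you get for free. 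The actual content of the paper's Lemma~\ref{Lem:maximal} is a propagation argument: after symmetrising via the involutions $(a_1,b_1,c_1)\leftrightarrow(a_2,b_2,c_2)$ and $(a_4,b_4,c_4)\leftrightarrow(a_5,b_5,c_5)$ and deriving the linear consequences $Z=b_0+b_3$ and $W=c_3+a_0$ of the gluing equations, one shows case by case that a single flat (hence taut, by Lemma~\ref{lemma:flatimpliestaut}) tetrahedron forces, through the edge equations, \emph{all} tetrahedra to be flat, so the volume is zero and cannot be maximal. Your proposal does not contain this argument or a substitute for it, and the final sentence of your third paragraph (``any admissible deformation increasing the flattened angles stays inside $\mathcal{A}$, contradicting maximality'') is circular as stated: you would need to produce such a deformation and show it increases volume, which is precisely what the degenerate-case analysis replaces. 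The suggestion to handle $p=2$ or $q=2$ by SnapPy cross-validation is also not a proof; the paper treats these cases by the same algebraic case analysis with the corresponding reduced equation sets.
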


We prove this theorem using angle structures. For more background, see Futer and Gu\'eritaud~\cite{FuterGueritaud:Angled}. 

We will require a labelled triangulation, in the following sense.

\begin{definition}\label{Def:LabelledTriang}
An \emph{oriented labelling} of an ideal tetrahedron is a labelling of its four ideal vertices with the numbers $0,1,2,3$,
up to oriented homeomorphism preserving edges. The opposite pairs of edges $01$ and $23$ are called $a$-edges, $02$ and $13$ $b$-edges, and $03$ and $12$ $c$-edges. The orientation is such that, as viewed from outside of the tetrahedron, the three incident edges around a vertex are an $a$-, $b$-, and $c$-edge and they are ordered anticlockwise. 

A \emph{labelled triangulation} of $M$ is an oriented ideal triangulation of $M$, where
\begin{enumerate}
\item $M$ consists of ideal tetrahedra with a fixed chosen ordering $\Delta_1, \dots, \Delta_{\frak{n}}$,
\item the edges are labelled with a fixed chosen ordering $E_1, \dots, E_{\frak{n}}$, and 
\item each tetrahedron is given an oriented labelling. 
\end{enumerate}
\end{definition}

Now consider the space of angle structures $\scrA(\calT_{K(p,q)})$ of the ideal triangulation $\calT_{K(p,q)}$. 
For $p,q>2$, a point $\alpha\in \scrA(\calT_{K(p,q)})$ has the form
\begin{align*}
\alpha = &(a_0, b_0, c_0, a_1, b_1, c_1, \cdots, a_5, b_5, c_5, \\
 & a_{0, 1}, b_{0, 1}, c_{0, 1}, \cdots, a_{p-3, 1},b_{p-3, 1},  c_{p-3, 1}, \\
 & a_{0, 2}, b_{0, 2}, c_{0, 2}, \cdots, a_{q-3, 2},b_{q-3, 2},  c_{q-3, 2})
\end{align*}
where $a_*, b_*, c_*$ lie in $(0,\pi)$ and sum to $\pi$, and correspond to angles on edges of the same tetrahedron, labelled to agree with Definition~\ref{Def:LabelledTriang}. The subscripts are as follows. Six of the tetrahedra agree with tetrahedra $0$ through $5$ of Table~\ref{Tab:BorroTrianTwoPrisms}, for the Borromean rings complement. We label the angles corresponding to the $j$-th such tetrahedra by $a_j$, $b_j$, $c_j$. The tetrahedra $\Delta_j^i$, $i=1, 2$, of the construction of Section~\ref{Sec:mintriangulation} have corresponding edges $a_{j,i}$, $b_{j,i}$, and $c_{j,i}$.

Any angle structure satisfies the edge gluing equations, given below, where the equations are labelled by the corresponding edge.
The edge equations for edges outside the layered solid torus may be read off of Figure~\ref{fig:cuspntwoprismsml}, and those within the layered solid torus may be read off of Figure~\ref{fig:MinConstructionHexagon} or its generalisation for higher numbers of tetrahedra. 
\begin{align}
\calO_0^1 \: & :\:  2b_0 + a_1 +b_1 +a_2+b_2+b_3+a_4+a_5+ a_{0, 1}=2 \pi, \label{eqn:w1} \\
\calO_1^1 \: & :\:  b_3 + b_4 + b_5+2c_{0, 1}+a_{1,1} =2 \pi, \label{eqn:w2}\\
\calP^1 \: & :\:  c_4 +c_5+ 2b_{0, 1} + \cdots + 2b_{p-3, 1}+ a_{p-3, 1} =2 \pi, \label{eqn:w5}\\
\calO_k^1 \: & :\:   a_{k-2,1} + 2 c_{k-1, 1}+ a_{k, 1} =2 \pi \quad \text{ for } k=2, \dots, p-3, \label{eqn:w9}\\
\calO_{p-2}^1 \: & :\:  a_{p-4, 1}+2c_{p-3, 1} =2 \pi,\label{eqn:w11}\\
E_0 \: & :\:  c_0 + c_2 + a_3 + c_5 =2 \pi, \label{eqn:w7}\\
E_1 \: & :\:  c_0 + c_1 + a_3 + c_4 =2 \pi, \label{eqn:w8}\\
\calO_0^2 \: & :\:  a_0 +a_1 + a_2+ 2c_3 +a_4+ b_4 +a_5 + b_5+a_{0,2} =2 \pi, \label{eqn:w3}
\end{align}
\begin{align}
\calO_1^2 \: & :\:  a_0 + b_1 + b_2+2c_{0, 2}+a_{1, 2} =2 \pi, \label{eqn:w4}\\
\calP^2 \: & :\:  c_1+c_2+ 2b_{0, 2} + \cdots + 2b_{q-3, 2}+ a_{q-3, 2} =2 \pi, \label{eqn:w6}\\
\calO_j^2 \: & :\:  a_{j-2,2} + 2 c_{j-1, 2}+ a_{j, 2} = 2\pi \quad \text{ for } j=2, \dots, q-3, \label{eqn:w10}\\
\calO_{q-2}^2 \: & :\:   a_{q-4, 2}+2c_{q-3, 2} =2 \pi. \label{eqn:w12}
\end{align}

When $p=2$ and $q>2$, and $(a_0, b_0, c_0, \cdots, a_5, b_5, c_5$, $a_{0, 2}, b_{0, 2}, c_{0, 2}, \cdots, a_{q-3, 2},b_{q-3, 2},  c_{q-3, 2}) \in \scrA(\calT_{K(2,q)})$ satisfy Equations \eqref{eqn:w7} through \eqref{eqn:w12} and the following two equations:
\begin{align}
\calO_0^1 \: & :\:   2b_0 + a_1 +b_1 +a_2+b_2+b_3+a_4+a_5 =2 \pi, \label{eqn:2q1} \\
\calO_1^1 \: & :\:  b_3 + b_4 + b_5+c_4 +c_5 = 2 \pi.\label{eqn:2q2} 
\end{align}

For $p>2$ and $q=2$, a point $(a_0, b_0, c_0, \cdots, a_5, b_5, c_5$, $a_{0, 1}, b_{0, 1}, c_{0, 1}, \cdots, a_{p-3, 1},b_{p-3, 1},  c_{p-3, 1}) \in \scrA(\calT_{K(p,q)})$ satisfies Equations \eqref{eqn:w1} through \eqref{eqn:w8}, and the following two equations:
\begin{align}
\calO_0^2 \: & :\:    a_0 +a_1 + a_2+ 2c_3 +a_4+ b_4 +a_5 + b_5 =2 \pi, \label{eqn:p21}\\
\calO_1^2 \: & :\:  a_0 + b_1 + b_2 +c_1+c_2= 2 \pi.\label{eqn:p22}
\end{align}

For $p=2$ and $q=2$, $(a_0, b_0, c_0, \cdots, a_5, b_5, c_5) \in \scrA(\calT_{K(2,2)})$ satisfies Equations \eqref{eqn:w7}, \eqref{eqn:w8}, \eqref{eqn:2q1}, \eqref{eqn:2q2}, \eqref{eqn:p21}, and \eqref{eqn:p22}.

\begin{lemma}\label{Lem:nonempty}
For $p,q \geq 2$, the set $\scrA(T_{K(p,q)})$ is non-empty.
\end{lemma}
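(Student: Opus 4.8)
The plan is to exhibit a single explicit angle structure, since $\scrA(\calT_{K(p,q)})$ is cut out by the linear edge equations \eqref{eqn:w1}--\eqref{eqn:w12} together with the open conditions that each triple sum to $\pi$ and each angle lie in $(0,\pi)$; the equalities alone define an affine subspace, so the set is convex and is nonempty as soon as we produce one point meeting every constraint strictly. I would organise the construction in two pieces: the six ``core'' tetrahedra $0,\dots,5$ inherited from the Borromean rings complement, and the two layered solid tori with tetrahedra $\Delta_j^1$ and $\Delta_j^2$.

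For the core tetrahedra I would take guidance from Thurston's decomposition of $S^3-\calB$ into two regular ideal octahedra: subdividing such an octahedron about a diagonal yields ideal tetrahedra with dihedral angles $(\pi/2,\pi/4,\pi/4)$. I would assign to $a_i,b_i,c_i$ ($i=0,\dots,5$) values of this shape, or a small symmetric perturbation, chosen so that the two purely-core equations \eqref{eqn:w7} and \eqref{eqn:w8} hold on the nose (each is a sum of four angles near $\pi/2$), and so that the Borromean contributions appearing in the coupling equations $\calO_0^i,\calO_1^i,\calP^i$ take convenient constant values.

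For each layered solid torus I would solve the interior edge equations as a recurrence. Setting the $a$-angles $a_{j,i}$ to a strictly convex positive sequence, the middle equations \eqref{eqn:w9} (resp.\ \eqref{eqn:w10}) determine the $c$-angles as $c_{k-1,i}=\pi-\tfrac12(a_{k-2,i}+a_{k,i})$, whereupon each $b_{k-1,i}=\tfrac12(a_{k-2,i}+a_{k,i})-a_{k-1,i}$ is exactly a positive second difference of the sequence; thus positivity of all three families reduces to strict convexity together with the two endpoint inequalities imposed by \eqref{eqn:w2} and \eqref{eqn:w11} (resp.\ \eqref{eqn:w4} and \eqref{eqn:w12}). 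Crucially, the interior of the sum $\sum_j b_{j,i}$ appearing in the boundary equation \eqref{eqn:w5} (resp.\ \eqref{eqn:w6}) telescopes, being a sum of second differences, so it is governed essentially by the slopes of the convex sequence at its two ends; this lets me pin $\sum_j b_{j,i}$ to the value dictated by \eqref{eqn:w5} by tuning those endpoint slopes, while the remaining coupling equations $\calO_0^i,\calO_1^i$ fix the last free parameters in terms of the core angles.

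I expect the main obstacle to be maintaining strict positivity of \emph{every} angle simultaneously for all $p,q\ge 2$ at once, rather than merely asymptotically: the boundary equation \eqref{eqn:w5} forces the average of the $b_{j,i}$ to shrink like $O(1/p)$ as the solid torus lengthens, so the convex sequence must be chosen uniformly thin while still meeting the fixed endpoint and coupling conditions coming from the core. I would make this uniform by writing the convex sequence in closed form with one or two parameters (for instance a shifted quadratic in $j$) and verifying the convexity and endpoint inequalities directly as functions of $p$. Finally, the degenerate cases $p=2$ and/or $q=2$ carry no interior solid-torus equations and are governed instead by the shortened lists \eqref{eqn:2q1}--\eqref{eqn:2q2} and \eqref{eqn:p21}--\eqref{eqn:p22}; these I would dispatch as separate finite checks, choosing explicit core angles by hand.
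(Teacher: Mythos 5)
Your overall strategy is the paper's: the defining conditions are affine equalities plus open positivity constraints, so one explicit point suffices, and for the layered solid tori the paper does exactly what you propose --- it takes $a_{k,1}=\epsilon(p-k-2)^2$, a convex quadratic, so that the interior equations \eqref{eqn:w9} force $c_{k-1,1}=\pi-\tfrac12(a_{k-2,1}+a_{k,1})$ and each $b_{k,1}$ is half a second difference, namely the constant $\epsilon$; the sum $\sum_j b_{j,1}$ in \eqref{eqn:w5} is then controlled not by making the average $O(1/p)$, as you suggest, but by loading essentially all of the required mass onto the first tetrahedron via $b_{0,1}=\pi/2-\epsilon(p-2)^2/2$ and keeping every other $b_{j,1}$ equal to the uniformly small $\epsilon$. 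So your ``main obstacle'' dissolves once the quadratic Ansatz is written down.

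The genuine gap is in your starting point for the six core tetrahedra. The angles $(\pi/2,\pi/4,\pi/4)$ arise from subdividing the regular ideal octahedra of the \emph{eight}-tetrahedron triangulation of Section~\ref{Sec:8TetB}; but $\calT_{K(p,q)}$ is built from the \emph{ten}-tetrahedron triangulation of Section~\ref{Sec:10TetB}, whose tetrahedra $\Delta_0,\dots,\Delta_5$ have quite different shapes. Concretely, equation \eqref{eqn:w1} contains nine corner contributions from the core tetrahedra (with $b_0$ counted twice) plus the positive term $a_{0,1}$; if every core angle is at least $\pi/4$, those nine contributions already sum to at least $9\pi/4>2\pi$, so \eqref{eqn:w1} fails by a definite amount and no small perturbation of the $(\pi/2,\pi/4,\pi/4)$ assignment can repair it. The paper instead uses core angles near $(\pi/3,\pi/6,\pi/2)$ (in various orders), with $\epsilon$- and $\delta$-corrections tuned so that \eqref{eqn:w1}--\eqref{eqn:w12} hold exactly. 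Finding such core values, and checking that the six coupling equations $\calO_0^1$, $\calO_1^1$, $\calP^1$, $\calO_0^2$, $\calO_1^2$, $\calP^2$ close up consistently with the solid-torus data, is the actual content of the lemma, and your proposal leaves that step unexecuted; the degenerate cases $p=2$ and/or $q=2$ are then handled, as you anticipate, by setting $\epsilon$ and/or $\delta$ to zero rather than by a separate construction.
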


\begin{proof}
For small $\epsilon>0$ and $\delta>0$,  $1<k\leq p-3$, and $1<j\leq q-3$, define $\alpha \in \scrA(\calT_{K(p,q)})$ by:
\[
\begin{pmatrix}
a_{k, 1} \\
b_{k, 1} \\
c_{k, 1} 
\end{pmatrix} = \begin{pmatrix}
 \epsilon(p-k-2)^2 \\
\epsilon \\
\pi - \epsilon ( (p-k-2)^2+1)
\end{pmatrix}, \quad \begin{pmatrix}
a_{j, 2} \\
b_{j, 2} \\
c_{j, 2} 
\end{pmatrix} = \begin{pmatrix}
 \delta(q-j-2)^2 \\
\delta \\
\pi - \delta ( (q-j-2)^2+1)
\end{pmatrix},
\]

\[
\begin{pmatrix}
a_{0, 1} \\
b_{0, 1} \\
c_{0, 1} 
\end{pmatrix} =  \begin{pmatrix}
 \epsilon (p-2)^2 \\
\pi/2 - \epsilon(p-2)^2/2\\
\pi/2  - \epsilon (p-2)^2/2
\end{pmatrix}, \quad  \begin{pmatrix}
a_{0, 2} \\
b_{0, 2} \\
c_{0, 2} 
\end{pmatrix} =  \begin{pmatrix}
 \delta (q-2)^2 \\
\pi/2 -  \delta (q-2)^2/2 \\
\pi/2 -  \delta  (q-2)^2/2
\end{pmatrix},
\]

\[
\begin{pmatrix}
a_{0} \\
b_{0}  \\
c_{0}
\end{pmatrix}
=\begin{pmatrix}
\frac{\pi}{3}  \\
\frac{\pi}{6} -\epsilon(2p-5)+\delta(q^2-6q+9)/2\\
\frac{\pi}{2}+\epsilon(2p-5)-\delta(q^2-6q+9)/2
\end{pmatrix},
\] 

\[
\begin{pmatrix}
a_{1} \\
b_{1} \\
c_{1} 
\end{pmatrix} =  \begin{pmatrix}
 \frac{\pi}{6}  -\delta(q^2-2q-1)/2\\
\frac{\pi}{3} +\delta(2q-5)\\
\frac{\pi}{2} +\delta(q^2-6q+9)/2
\end{pmatrix}, \qquad \begin{pmatrix}
a_{2}\\
b_{2} \\
c_{2}
\end{pmatrix} =  \begin{pmatrix}
 \frac{\pi}{6}  -\delta(q^2-6q+9)/2\\
\frac{\pi}{3} \\
\frac{\pi}{2}  +\delta(q^2-6q+9)/2
\end{pmatrix},
\]

\[
\begin{pmatrix}
a_{3} \\
b_{3} \\
c_{3} 
\end{pmatrix} = \begin{pmatrix}
\frac{\pi}{2}   -\epsilon(p^2-2p-1)/2\\
\frac{\pi}{3} +\epsilon(2p-5)\\
\frac{\pi}{6} +\epsilon(p^2-6p+9)/2
\end{pmatrix}, \quad \text{and} \quad \begin{pmatrix}
a_{4} \\
b_{4} \\
c_{4} 
\end{pmatrix}=\begin{pmatrix}
a_{5} \\
b_{5} \\
c_{5} 
\end{pmatrix} =  \begin{pmatrix}
 \frac{\pi}{6}-\epsilon(p^2-6p+9)/2\\
\frac{\pi}{3} \\
\frac{\pi}{2}+\epsilon(p^2-6p+9)/2
\end{pmatrix}.\]

The following statements are verified by direct substitution.
\begin{enumerate}
\item For $p, q>2$, small $\epsilon$, and small $\delta$, $\alpha$ is a shape structure that satifies all gluing equations \eqref{eqn:w1} through \eqref{eqn:w12}, and thus lies in $\scrA(\calT_{K(p,q)})$. 

\item For $p=2$ and $q>2$, set $\epsilon=0$. Then for small $\delta$, $\alpha$ satisfies all required tetrahedron and edge gluing equations and thus lies in $\scrA(\calT_{K(2,q)})$.
\item Similarly for $q=2$ and $p>2$, set $\delta=0$. Then for small $\epsilon$, $\alpha$ satisfies tetrahedron and edge gluing equations. 
\item For $p=q=2$, set $\epsilon=\delta=0$. Then one checks directly that $\alpha$ lies in $\scrA(\calT_{K(2,2)})$. \qedhere
\end{enumerate}
\end{proof}

A tetrahedron $\Delta$ of a triangulation $\calT$ endowed with an extended shape structure $\alpha \in \overline{\scrA(\calT)}$ is \emph{flat} for $\alpha$ if at least one of the three angles of $\calT$ is zero, and \emph{taut} for $\alpha$ if two angles are zero and the third is $\pi$. In both cases, $\Delta$ has volume equal to zero.

\begin{lemma}\label{lemma:flatimpliestaut}
Suppose for $p,q\geq 2$, the volume functional on $\overline{\scrA(\calT_{K(p,q)})}$ is maximal at $\alpha \in \overline{\scrA(\calT_{K(p,q)})} \setminus \scrA(\calT_{K(p,q)})$. If an angle of $\alpha$ equals $0$, then the other two angles associated to the same tetradedron are $0$ and $\pi$. 
\end{lemma}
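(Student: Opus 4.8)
The plan is to rule out a flat-but-not-taut tetrahedron at a boundary maximum by exhibiting a direction in which the volume strictly increases, exploiting the blow-up of the derivative of the Lobachevsky function at the endpoints of $(0,\pi)$. Recall that the volume functional is $\mathcal{V}(\alpha)=\sum_\Delta\bigl(\Lambda(a_\Delta)+\Lambda(b_\Delta)+\Lambda(c_\Delta)\bigr)$, where $\Lambda(\theta)=-\int_0^\theta\log\lvert 2\sin t\rvert\,dt$ is the Lobachevsky function, so that $\Lambda'(\theta)=-\log\lvert 2\sin\theta\rvert$ and $\Lambda'(\theta)\to+\infty$ both as $\theta\to 0^+$ and as $\theta\to\pi^-$. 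This functional extends continuously to $\overline{\scrA(\calT_{K(p,q)})}$ and is smooth on the relative interior $\scrA(\calT_{K(p,q)})$.

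First I would argue by contradiction: suppose $\alpha$ maximizes $\mathcal{V}$ on $\overline{\scrA(\calT_{K(p,q)})}$, yet some tetrahedron of $\alpha$ has one angle equal to $0$ while its other two angles lie in $(0,\pi)$, i.e.\ it is flat but not taut. By Lemma~\ref{Lem:nonempty} choose an honest angle structure $\beta\in\scrA(\calT_{K(p,q)})$, so that every coordinate of $\beta$ lies in $(0,\pi)$. Consider the segment $\alpha_t=(1-t)\alpha+t\beta$; since $\scrA(\calT_{K(p,q)})$ is the relative interior of its closure, $\alpha_t\in\scrA(\calT_{K(p,q)})$ for $t\in(0,1]$, and $g(t):=\mathcal{V}(\alpha_t)$ is smooth there with $g(t)\le g(0)$ by maximality. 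The contradiction will come from showing that $g$ is in fact strictly increasing for small $t>0$.

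The heart of the argument is the asymptotics of $g'(t)$ as $t\to 0^+$. Writing each angle as $\theta(t)=(1-t)\theta_\alpha+t\theta_\beta$, angles with $\theta_\alpha\in(0,\pi)$ contribute terms that stay bounded, while an angle with $\theta_\alpha=0$ satisfies $\Lambda'(\theta(t))=-\log(2t\,\theta_\beta)+o(1)$ and one with $\theta_\alpha=\pi$ satisfies $\Lambda'(\theta(t))=-\log\bigl(2t(\pi-\theta_\beta)\bigr)+o(1)$. Collecting the logarithmic singularities gives
\[
g'(t)=-\log(2t)\,S+O(1),\qquad S:=\sum_{\theta_\alpha=0}\theta_\beta+\sum_{\theta_\alpha=\pi}(\theta_\beta-\pi).
\]
The decisive bookkeeping step is to evaluate $S$ tetrahedron by tetrahedron, using that $\beta$ sums to $\pi$ on each tetrahedron: a taut tetrahedron with angles $(0,0,\pi)$ contributes $\theta_\beta+\theta_\beta'+(\theta_\beta''-\pi)=0$, a tetrahedron with all angles in $(0,\pi)$ contributes $0$, and a flat-but-not-taut tetrahedron contributes exactly the positive value $\theta_\beta>0$ of $\beta$ at its single zero angle. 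Hence the presence of at least one flat-but-not-taut tetrahedron forces $S>0$, so $g'(t)\to+\infty$ and $g$ is strictly increasing near $t=0$, contradicting $g(t)\le g(0)$. Therefore no flat-but-not-taut tetrahedron can occur, which is precisely the claim that if one angle of $\alpha$ is $0$ then the other two are $0$ and $\pi$.

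I expect the main obstacle to be making the derivative asymptotics rigorous, in particular justifying that the bounded $O(1)$ terms genuinely stay bounded and do not interfere with the dominant $-\log(2t)\,S$ term; this reduces to the elementary estimate $\Lambda'(\theta(t))=-\log(2t\,c)+o(1)$ near a boundary angle, together with the observation that only zero and $\pi$ angles produce singular contributions. The per-tetrahedron evaluation of $S$ is where the taut condition enters, and getting the signs right there—zero angles entering with $+\theta_\beta$ and the $\pi$ angle with $\theta_\beta-\pi<0$, so that taut tetrahedra cancel exactly—is the one place demanding care.
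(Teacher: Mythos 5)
Your argument is correct and is precisely the standard proof of the result the paper invokes here: the paper proves this lemma by citing \cite[Proposition~7.1]{GueritaudFuter:Canonical} (see also \cite[Proposition~9.19]{PurcellHyperbolicKnotTheory}), and your derivative computation along the segment toward an interior angle structure, with the per-tetrahedron cancellation $\theta_\beta+\theta_\beta'+(\theta_\beta''-\pi)=0$ for taut tetrahedra and the strictly positive leftover for a flat-but-not-taut one, is exactly the argument given there. No gaps; the only point worth recording is that $g'$ has an integrable (logarithmic) singularity at $t=0$, so $g(t_0)-g(0)=\int_0^{t_0}g'(t)\,dt>0$ indeed follows.
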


\begin{proof}
This follows from \cite[Proposition 7.1]{GueritaudFuter:Canonical}; see also~\cite[Proposition 9.19]{PurcellHyperbolicKnotTheory}.
\end{proof}

\begin{lemma}\label{LSTflat}
Suppose the volume functional on the space of angle structures on a layered solid torus takes its maximum on the boundary. Then this structure consists of only taut tetrahedra.
\end{lemma}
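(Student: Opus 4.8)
The plan is to reduce the statement to two inputs: the principle that at a boundary maximum of volume a flat tetrahedron is forced to be taut, and the chain structure of the edge equations of a layered solid torus. Write $\scrA$ for the space of angle structures on the layered solid torus and let $\alpha \in \overline{\scrA}$ be the assumed maximizer, lying on the boundary $\overline{\scrA}\setminus\scrA$. Being a boundary point, $\alpha$ has some angle equal to $0$ (if an angle equals $\pi$, the other two angles of that tetrahedron sum to $0$ and hence also vanish), so at least one tetrahedron is flat. Since the maximum is attained on the boundary, the argument of Lemma~\ref{lemma:flatimpliestaut}, which applies to any ideal triangulation carrying an angle structure (see \cite[Proposition~7.1]{GueritaudFuter:Canonical}), shows that \emph{every} flat tetrahedron of $\alpha$ is in fact taut. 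Consequently it suffices to prove that every tetrahedron is flat; tautness then follows automatically.

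First I would record the combinatorial structure. The tetrahedra $\Delta_0,\dots,\Delta_n$ of a layered solid torus are layered one onto the next along a walk in the Farey graph, and each edge buried by a layering (together with the final fold edge) is an interior edge whose gluing equation involves only the angles of consecutive tetrahedra. In the notation of Section~\ref{Sec:mintriangulation} these are the edges $\calO_k$, with interior equation $a_{k-2}+2c_{k-1}+a_k = 2\pi$ and fold equation $a_{n-1}+2c_n = 2\pi$. The essential feature is that each such equation contains the \emph{doubled} angle $2c_{k-1}$ of a single tetrahedron $\Delta_{k-1}$ together with single angles of its two neighbours $\Delta_{k-2}$ and $\Delta_k$, so that these edges organise the tetrahedra into a connected chain.

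The main step is the propagation. Suppose $\Delta_m$ is taut, so its angle $c_m$ equals $0$ or $\pi$. Substituting into the edge equation containing $2c_m$ and using that all angles lie in $[0,\pi]$ while the remaining terms are nonnegative, one forces the relevant angle of each neighbouring tetrahedron to equal $0$ (when $c_m=\pi$) or $\pi$ (when $c_m=0$). In either case the neighbour has an angle in $\{0,\pi\}$, hence is flat, hence is taut by the previous paragraph. Since the chain on $\{\Delta_0,\dots,\Delta_n\}$ is connected, starting from the seed flat (hence taut) tetrahedron, flatness and therefore tautness propagates to the entire solid torus. The innermost tetrahedron is reached through the fold equation $a_{n-1}+2c_n=2\pi$, which moreover forces $c_n=\pi$ (the alternative $c_n=0$ would give $a_{n-1}=2\pi>\pi$), and the outermost tetrahedron $\Delta_0$ is reached through the first interior edge equation. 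This yields that all tetrahedra are taut, as claimed.

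I expect the main obstacle to be bookkeeping the ends of the chain uniformly over all layered solid tori: the fold edge and the outermost buried edge carry equations with fewer terms than the generic interior equation, and the precise incidences depend mildly on the Farey turning pattern of the walk. To keep the argument clean I would isolate a single structural lemma asserting that, for any layered solid torus, the internal edges arrange the tetrahedra into a connected chain in which each edge equation contains the doubled angle $2c$ of one tetrahedron together with single angles of its neighbours. Granting this lemma, the propagation above applies verbatim, and the two displayed equations cover the endpoint cases.
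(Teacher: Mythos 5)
The paper does not prove this lemma at all: its ``proof'' is a citation to \cite[Lemma~4.18]{HPhighlytwisted} and \cite[Proposition~15]{GScanonicaltriangulation}. So your attempt is a reconstruction of an argument the paper outsources, and it contains a genuine gap at exactly the point you flag as ``bookkeeping.'' Your propagation step is correct in the interior of the chain: if $\Delta_m$ is taut and some \emph{interior} edge equation contains the doubled angle $2c_m$ together with single angles of the neighbours, then $a_{m-1}+2c_m+a_{m+1}=2\pi$ with $c_m\in\{0,\pi\}$ forces $a_{m-1},a_{m+1}\in\{0,\pi\}$, and the fold equation likewise propagates inward. The problem is the outermost tetrahedron $\Delta_0$. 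Its doubled angles sit on the three \emph{boundary} edges of the solid torus (the slopes $\calO_0^1$, $\calO_1^1$, $\calP^1$ in the paper's notation), whose gluing equations involve tetrahedra outside the layered solid torus and hence are not equations of ``the space of angle structures on a layered solid torus.'' The only interior edge equation containing an angle of $\Delta_0$ is $a_0+2c_1+a_2=2\pi$, where $a_0$ appears as a \emph{single} angle; knowing $a_0\in\{0,\pi\}$ gives $2c_1+a_2\in\{\pi,2\pi\}$, which does not force $\Delta_1$ or $\Delta_2$ to be flat. So if the set of flat tetrahedra at the boundary maximiser is exactly $\{\Delta_0\}$, your argument terminates without a contradiction and without concluding that all tetrahedra are taut. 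Ruling out this ``partially flat'' configuration is the real content of the cited results, and it cannot be done with the linear gluing equations plus flat-implies-taut alone: one must examine the sign of the one-sided derivative of the volume functional along deformations supported near the flat tetrahedron (a taut tetrahedron contributes a finite, generally \emph{positive} amount of the form $\alpha\log\frac{\alpha+\beta}{\alpha}+\beta\log\frac{\alpha+\beta}{\beta}$, so maximality is not automatic and must be played off against the combinatorics of the layered solid torus).

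A secondary issue is that the structural lemma you propose to isolate --- that in \emph{any} layered solid torus every interior edge equation has the form (single angle) $+$ (one doubled angle) $+$ (single angle) $=2\pi$ --- is false in general. A slope that survives $m$ consecutive steps of the Farey walk before being buried has valence $2+2m$, so its equation contains $m$ doubled angles from $m$ distinct tetrahedra; the ``one doubled angle'' form holds only when every non-pivot slope survives exactly one step, as happens for the straight walks to $-1/p$ used in this paper. Since the lemma is stated for an arbitrary layered solid torus (and is applied via citation to results proved in that generality), the proposed reduction does not cover the statement being proved. For the specific triangulations $\calT_{K(p,q)}$ your chain description is accurate, but even there the endpoint gap above remains.
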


\begin{proof}
This is \cite[Lemma 4.18]{HPhighlytwisted}; see also \cite[Proposition~15]{GScanonicaltriangulation}.
\end{proof}

\begin{lemma}\label{Lem:maximal}
Suppose the volume functional on $\overline{\scrA(\calT_{K(p,q)})}$ is maximal at $\alpha$. Then $\alpha \notin \overline{\scrA(\calT_{K(p,q)})} \setminus \scrA(\calT_{K(p,q)})$.
\end{lemma}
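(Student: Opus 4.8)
The goal is to rule out the maximum of the volume functional occurring on the boundary of the angle structure space, which combined with the earlier lemmas will force the maximum to lie in the interior, giving a genuine hyperbolic structure and hence a geometric triangulation.

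\medskip

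The plan is to argue by contradiction. Suppose the volume functional on $\overline{\scrA(\calT_{K(p,q)})}$ attains its maximum at some $\alpha \in \overline{\scrA(\calT_{K(p,q)})} \setminus \scrA(\calT_{K(p,q)})$, so that some angle of $\alpha$ vanishes. By Lemma~\ref{lemma:flatimpliestaut}, every tetrahedron with a zero angle is in fact taut: its angles are $0, 0, \pi$. First I would separate the tetrahedra into those lying in the two layered solid tori (the $\Delta_j^i$) and the six ``core'' tetrahedra $\Delta_0, \dots, \Delta_5$ coming from the Borromean rings complement. By Lemma~\ref{LSTflat}, if the volume functional restricted to a layered solid torus takes its maximum on the boundary, all of its tetrahedra are taut; so I would begin by analysing the consequences of a taut tetrahedron in a solid torus and propagate these constraints through the chain of edge equations \eqref{eqn:w9}, \eqref{eqn:w11} (and their cusp-$2$ analogues \eqref{eqn:w10}, \eqref{eqn:w12}).

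\medskip

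The key mechanism is that the edge equations are linear relations among the angles, and a taut tetrahedron forces specific angles to be $0$ or $\pi$; substituting these into an adjacent edge equation forces further angles to extreme values, which then propagates. For instance, the chain equations $\calO_k^1: a_{k-2,1} + 2c_{k-1,1} + a_{k,1} = 2\pi$ have all summands in $[0,\pi]$, so they can be satisfied at an extreme point only in a very restricted way (e.g. $c_{k-1,1} = \pi$, forcing the paired $a$- and $b$-angles of that tetrahedron to be $0$, and then forcing the neighbouring $a$-angles). I would show that taut-ness of any one layered-solid-torus tetrahedron forces taut-ness of all of them and in particular forces the angles on the edges $\calP^1, \calP^2$ (Equations \eqref{eqn:w5}, \eqref{eqn:w6}) and on $\calO_0^1, \calO_0^2$, $\calO_1^1, \calO_1^2$ (Equations \eqref{eqn:w1}--\eqref{eqn:w4}) to take values that then over-constrain the six core tetrahedra. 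Working carefully through \eqref{eqn:w1}--\eqref{eqn:w8}, together with the tetrahedron conditions $a_j + b_j + c_j = \pi$, I expect to derive a system that cannot be satisfied with all angles in $[0,\pi]$: the core equations $E_0, E_1$ (\eqref{eqn:w7}, \eqref{eqn:w8}) each sum four angles to $2\pi$, so both are extremal only if all four equal $\pi/2$ or are split as $\pi + \pi + 0 + 0$, and I would show these cannot be made compatible with the forced values coming from the solid-tori side. The boundary cases $p=2$ and/or $q=2$, where the chain of tetrahedra is empty and Equations \eqref{eqn:2q1}--\eqref{eqn:p22} replace the general ones, must be checked separately but by the same propagation-of-extremes argument.

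\medskip

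The main obstacle I anticipate is bookkeeping: there are many tetrahedra and many edge equations, and the argument is a delicate combinatorial propagation showing that a single zero angle cascades into a globally inconsistent assignment of extreme angles. The conceptual content is modest---it is a standard ``no taut angle structure on the whole triangulation'' or ``the taut constraints are inconsistent'' type argument---but making it airtight requires handling the layered-solid-torus chains of arbitrary length $p-2$ and $q-2$ uniformly, and then checking the finitely many ways the six core tetrahedra can be forced. A clean way to organise this is to invoke Lemma~\ref{LSTflat} to reduce the solid-tori to their taut configurations, show that the only taut configuration of a layered solid torus forces the angle on its folding edge ($\calP^i$) and on the edges $\calO_0^i, \calO_1^i$ feeding into the core, and then verify that no taut assignment of $\Delta_0,\dots,\Delta_5$ is consistent with \eqref{eqn:w1}--\eqref{eqn:w8}. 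Since a taut structure has zero volume while $\scrA(\calT_{K(p,q)})$ is nonempty by Lemma~\ref{Lem:nonempty} (and positive-volume points exist there), the existence of a genuine interior angle structure already shows the maximum volume is strictly positive; thus it suffices to show that every boundary point is flat/taut with zero volume, contradicting maximality. This last observation---that any boundary maximiser would have volume $0 < \sup_{\scrA} \mathrm{Vol}$---may in fact be the cleanest route, reducing the problem to showing that \emph{every} point of $\overline{\scrA} \setminus \scrA$ has a taut (hence zero-volume) tetrahedron configuration, which follows from Lemma~\ref{lemma:flatimpliestaut} applied tetrahedron-by-tetrahedron.
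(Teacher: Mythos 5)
Your overall strategy --- contradiction, flat-implies-taut at a maximiser (Lemma~\ref{lemma:flatimpliestaut}), tautness of entire layered solid tori (Lemma~\ref{LSTflat}), and propagation of extreme angles through the gluing equations --- is the same as the paper's. But as written the proposal has a genuine gap in two places. First, the closing ``cleanest route'' is not valid: it is false that every point of $\overline{\scrA(\calT_{K(p,q)})}\setminus\scrA(\calT_{K(p,q)})$ has volume zero. A boundary point only has \emph{some} tetrahedron with a vanishing angle; all remaining tetrahedra may have strictly positive volume, so the total volume at a boundary point can be large. Moreover, Lemma~\ref{lemma:flatimpliestaut} applies only at a volume maximiser and only to tetrahedra that already have a zero angle --- it cannot be ``applied tetrahedron-by-tetrahedron'' to conclude that all tetrahedra are taut. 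The entire content of the lemma you are proving is to show that, \emph{at a maximiser}, a single flat tetrahedron forces every tetrahedron to be flat (or forces an outright inconsistency among the equations), and only then does zero total volume contradict the positive volume guaranteed by Lemma~\ref{Lem:nonempty}.

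Second, the propagation itself is only sketched (``I would show\dots'', ``I expect to derive\dots''), and that is exactly where the work lies. The paper first exploits the involution exchanging $(a_1,b_1,c_1)\leftrightarrow(a_2,b_2,c_2)$ and $(a_4,b_4,c_4)\leftrightarrow(a_5,b_5,c_5)$, together with concavity of the volume functional, to choose a maximiser with these pairs equal; it then derives the identities $Z=b_0+b_3$ and $W=c_3+a_0$ (in the notation of its proof) by telescoping the chain equations \eqref{eqn:n8}--\eqref{eqn:n10} against the shape parameter relations. These two identities are what make the case analysis over which angle equals $\pi$ close up in finitely many short steps. Without the symmetrisation and these derived relations, the ``finitely many ways the six core tetrahedra can be forced'' that you allude to is a much larger and less structured search, and you have not verified that every branch terminates in a contradiction, nor handled the separate systems of equations for $p=2$, $q=2$, and $p=q=2$. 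So the proposal identifies the correct skeleton but does not yet constitute a proof, and the proposed shortcut at the end should be discarded.
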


\begin{proof}
We will prove the lemma by contradiction. Suppose $\alpha \in \overline{\scrA(\calT_{K(p,q)})} \setminus \scrA(\calT_{K(p,q)})$ maximises the volume functional. 

We first treat the case $p, q>2$. Notice that the simultaneous involutions $(a_1, b_1,c_1) \leftrightarrow (a_2, b_2,c_2)$ and $(a_4, b_4, c_4) \leftrightarrow (a_5, b_5,c_5)$ preserve Equations \eqref{eqn:w1} through \eqref{eqn:w12}. By concavity of the volume function, there is a maximiser such that $(a_1, b_1,c_1)  =(a_2, b_2,c_2)$ and $(a_4, b_4, c_4) = (a_5, b_5,c_5)$. Let  $(X, Y, Z)=(a_1, b_1,c_1)  =(a_2, b_2,c_2)$ and $(U, V, W)=(a_4, b_4, c_4) = (a_5, b_5,c_5)$, then Equations \eqref{eqn:w1} thru \eqref{eqn:w12} can be rewritten as follows (note Equations \eqref{eqn:w7} and \eqref{eqn:w8} become \eqref{eqn:n7}).
\begin{align}
2 \pi &= a_{0,1}+2b_0+b_3+2U+2X+2Y, \label{eqn:n1} \\
2 \pi &= a_{1,1}+b_3+2c_{0,1}+2 V, \label{eqn:n2}\\
2\pi &=  2W+2b_{0, 1} + \cdots + 2b_{p-3, 1}+ a_{p-3, 1}, \label{eqn:n5}\\
2\pi &=  2 c_{i+1, 1}+ a_{i+2, 1}+ a_{i, 1}  \quad \text{ for } 0 \leq i \leq p-5, \label{eqn:n8}\\
2\pi &= a_{p-4, 1}+2c_{p-3, 1} ,\label{eqn:n10}\\
2\pi &= W+Z+a_3+c_0, \label{eqn:n7}\\
2 \pi &= a_{0,2}+a_0+2c_3+2U+2V+2X, \label{eqn:n3}\\
2 \pi &= a_{1,2}+2 c_{0,2}+a_0+2Y, \label{eqn:n4}\\
2 \pi &=  2Z+2b_{0, 2} + \cdots + 2b_{q-3, 2}+ a_{q-3, 2},  \label{eqn:n6}\\
2\pi&= 2 c_{j+1, 2}+ a_{j+2, 2}+ a_{j, 2}  \quad \text{ for } 0 \leq j \leq q-5, \label{eqn:n9}\\
2\pi &= a_{q-4, 2}+2c_{q-3, 2}.\label{eqn:n11}
\end{align}

Take the sum of Equations \eqref{eqn:n1} and \eqref{eqn:n2} and inductively apply shape parameter equation $2c_{i, 1} = 2\pi -2b_{i,1}-2a_{i,1}$ and Equation \eqref{eqn:n8}, $2 \pi - a_{i, 1}=2c_{i+1 ,1}+a_{i+2, 1}$, for $0 \leq i \leq p-5$. Then apply shape parameter equation $2c_{p-4, 1} = 2\pi -2b_{p-4,1}-2a_{p-4,1}$ and Equation \eqref{eqn:n10} to obtain:
\begin{align}
4\pi &=  2c_{i+1, 1}+a_{i+1, 1}+a_{i+2,1}-2\sum_{j=0}^{i}b_{j,1}+2b_0+2b_3+2X+2Y+2U+2V  \nonumber \\
& \qquad \text{ for } 0 \leq i \leq p-5 \nonumber \\
&= 2c_{p-3, 1}+a_{p-3,1}-2\sum_{j=0}^{p-4}b_{j,1} +2b_0+ 2b_3 +2X+2Y+2U+2V. \label{eqn:14}
\end{align}
Next apply shape parameter equation $2c_{p-3, 1} = 2\pi -2b_{p-3,1}-2a_{p-3,1}$,  Equation \eqref{eqn:n5} and the shape parameter equations $2\pi=2W+2U+2V$ and $2X+2Y=2\pi-2Z$ to Equation \refeq{eqn:14}, to obtain
\begin{align}
Z &= b_0+b_3. \label{eqn:n12}
\end{align}

Similarly, the sum of Equations \eqref{eqn:n3} and \eqref{eqn:n4} can be rewritten as:
\begin{align}
W &= c_3+a_0\label{eqn:n13}
\end{align}

Let $V_i$ denote the $i$-th layered solid torus, $i=1$ or $2$. Throughout this proof we will apply Lemma~\ref{lemma:flatimpliestaut}: flat implies taut.

\vspace{.1in}

\noindent \underline{Case 1.} 
Suppose a tetrahedron in the layered solid torus $V_1$ is flat.  By Lemma~\ref{LSTflat}, all of the tetrahedra in  $V_1$ corresponding to $\alpha$ are taut. This means that the tetrahedron on its boundary is taut, and so exactly one of $a_{0,1}$, $b_{0,1}$, or $c_{0,1}$ is equal to $\pi$. We step through these cases to show they cannot maximise volume. 

\underline{Case 1(a).} Suppose $a_{0, 1} =\pi, b_{0, 1}=c_{0, 1}=0$. By Equation \eqref{eqn:n8}, $a_{2, 1} = \pi, c_{2, 1}=b_{2, 1} = 0$. By induction, $a_{i, 1}=\pi$, $c_{i, 1} =b_{i, 1} =0$ for $1 \leq i \leq p-3$. This is inconsistent with Equation \eqref{eqn:n10}. So we must have $a_{0, 1} =0$.

\underline{Case 1(b).} Suppose $b_{0, 1} =\pi, a_{0, 1}=c_{0, 1}=0$. By Equation \eqref{eqn:n5}, $b_{i, 1} =0$ for $1 \leq i \leq p-3$. 
By Equation \eqref{eqn:n8}, $a_{2, 1}=0$, hence $c_{2,1} =\pi$, and $c_{1,1}=\pi$. By induction $c_{i, 1} = \pi$ for $1 \leq i \leq p-3$. Furthermore, by Equation \eqref{eqn:n5}, $W=0$, and so one of $U$ or $V$ must equal $\pi$. By Equation \eqref{eqn:n13}, $c_3=a_0=0$. 

\begin{enumerate}
\item Suppose $U = \pi$. Then by Equation \eqref{eqn:n1}, $Y=X=b_3=b_0=0$ and so $Z=a_3=c_0=\pi$. By Equation \eqref{eqn:n6}, $b_{0,2}=\cdots =b_{p-2, 2}=0$. By Lemma \ref{LSTflat}, all tetrahedra in $V_2$ are also flat. Since all tetrahedra in $V_1$ are flat, and all tetrahedra $\Delta_0, \dots,\Delta_5$ are flat, this implies that the volume is zero, and so the volume functional cannot be maximised at $\alpha$.

\item Suppose $V = \pi$. By Equation \eqref{eqn:n3}, $a_{0,2}=X=0$.  Again by Lemma \ref{LSTflat}, all tetrahedra are flat, so the volume cannot be maximal.
\end{enumerate}

\underline{Case 1(c).} Suppose $c_{0, 1} =\pi, a_{0, 1}=b_{0, 1}=0$. By Equation \eqref{eqn:n8}, $c_{1, 1} = \pi, a_{2, 1}=0$ which implies that $a_{1,1}=0$.  By induction, $c_{i, 1} = \pi$  for $0 \leq i \leq p-4$ and by Equation \eqref{eqn:n10}, $c_{p-3, 1} = \pi$. By Equation \eqref{eqn:n5}, $W=\pi$. By Equation \eqref{eqn:n2}, $b_3=0$, and so one of $a_3=\pi$ or $c_3=\pi$. 

\begin{enumerate}
\item Suppose $a_3 = \pi$ and $c_3=0$. By Equation \eqref{eqn:n13}, $a_0=\pi$, and by Equation \eqref{eqn:n12}, $Z=0$. Since $c_3=U=V=0$, then by Equation \eqref{eqn:n3}, $a_{0,2}=\pi$.  By Lemma \ref{LSTflat}, all tetrahedra are flat, so the volume cannot be maximal.

\item  Suppose  $a_3 = 0$ and $c_3= \pi$. By Equation \eqref{eqn:n3} we have $a_{0,2}=a_0=X=0$. By Lemma \ref{LSTflat}, all tetrahedra are flat, so the volume cannot be maximal.
\end{enumerate}

\vspace{.1in}

\noindent\underline{Case 2.} Suppose a tetrahedron in the layered solid torus $V_2$ is flat.  By Lemma \ref{LSTflat}, all of the tetrahedra in $V_2$ corresponding to $\alpha$ are taut. In particular, exactly one of $a_{0,2}$, $b_{0,2}$, or $c_{0,2}$ is $\pi$ and the others are $0$. 

\underline{Case 2(a).}
Suppose $a_{0, 2} =\pi, b_{0, 2}=c_{0, 2}=0$. By Equation \eqref{eqn:n9}, $a_{2, 2} = \pi, c_{2, 2}=b_{2, 2} = 0$. By induction, $a_{i, 2}=\pi$, $c_{i, 2} =b_{i, 2} =0$ for $1 \leq i \leq q-3$. This is inconsistent with Equation \eqref{eqn:n11}. So we must have $a_{0, 2} =0$.

\underline{Case 2(b).} Suppose $b_{0, 2} =\pi, a_{0, 2}=c_{0, 2}=0$.  By Equation \eqref{eqn:n6}, $b_{i, 2} =0$ for $1 \leq i \leq q-3$ and $Z=0$, so either $X=\pi$ or $Y=\pi$. By Equation \eqref{eqn:n1}, $a_{0,1}=b_0=b_3=U=0$. By Lemma \ref{LSTflat}, all tetrahedra are flat, so the volume cannot be maximal.

\underline{Case 2(c).} Suppose $c_{0, 2} =\pi, a_{0, 2}=b_{0, 2}=0$. By Equation \eqref{eqn:n9}, $c_{1, 2} = \pi, a_{2, 2}=0$ which implies that $a_{1,2}=0$.  By induction, $c_{i, 2} = \pi$  for $0 \leq i \leq q-4$ and by Equation \eqref{eqn:n11}, $c_{q-3, 2} = \pi$. By Equation \eqref{eqn:n6}, $Z=\pi$ and by Equation \eqref{eqn:n4}, $a_0=0$. Then one of $b_0$ or $c_0$ equals $\pi$. 

\begin{enumerate}
\item Suppose $b_0=\pi$. Then 
  by Equation \eqref{eqn:n1}, $b_3=U=a_{0,1}=0$. By Lemma \ref{LSTflat}, all tetrahedra are flat.

\item Suppose $c_0=\pi$. Then by Equation \eqref{eqn:n12} $b_3=\pi$ and so by Equation \eqref{eqn:n13} $W=0$. This implies that $U=\pi$ or $V=\pi$. Since $b_3=\pi$, Equation \eqref{eqn:n2} implies that $V=0$ so $U=\pi$. By Equation \eqref{eqn:n1}, $a_{0,1}=0$.  By Lemma \ref{LSTflat}, all tetrahedra are flat.
\end{enumerate}


\vspace{.1in}

\noindent\underline{Case 3.} Now suppose no tetrahedra in $V_1$ and $V_2$ are flat, but one of the other tetrahedra is flat. There are a number of cases. We show that in each case, there is either a contradiction or a tetrahedron in $V_1$ or $V_2$ is flat. 
\begin{enumerate}
\item[(a)] If $b_0=\pi, U=\pi, X=\pi,$ or $Y=\pi$, then Equation \eqref{eqn:n1} implies $a_{0,1}=0$.
\item[(b)]  If $c_3=\pi$ or $V=\pi$,  then Equation \eqref{eqn:n3} implies $a_{0,2}=0$.
\item[(c)]  If $W = \pi$, then Equation \eqref{eqn:n5} implies $b_{0,1}=\cdots=b_{p-3, 1}=a_{p-3, 1}=0$.
\item[(d)] If $a_0 = \pi$, then Equation \eqref{eqn:n13} implies $W=\pi$, a contradiction by case (c).
\item[(e)]  If $Z = \pi$, then Equation \eqref{eqn:n6} implies $b_{0,2}=\cdots=b_{q-3, 2}=a_{q-3, 2}=0$.
\item[(f)] If $b_3=\pi$, then Equation \eqref{eqn:n12} implies $Z=\pi$, a contradiction by case (e).
\item[(g)] If $c_0 = \pi$, then Equation \eqref{eqn:n13} implies $c_3 = W$ and Equation \eqref{eqn:n3} implies $a_{0,2}=0$.
\item[(h)] If $a_3 = \pi$, then Equation \eqref{eqn:n12} implies $b_0 = Z$ and Equation \eqref{eqn:n1} implies $a_{0,1}=0$.
\end{enumerate}
This concludes the proof for $p,q>2$. 

Now consider $p=2$ and $q>2$. Equations \eqref{eqn:w7} thru \eqref{eqn:2q2} can be rewritten as follows. 
\begin{align}
2 \pi &= 2b_0+b_3+2U+2X+2Y \label{eqn:m1} \\
2 \pi &= b_3+2V+2W\label{eqn:m3}\\
2\pi &= W+Z+a_3+c_0 \label{eqn:m4}\\
2 \pi &= a_{0,2}+a_0+2c_3+2U+2V+2X \label{eqn:m5}\\
2\pi & = 2c_{0,2} + a_0 + a_{1,2} +2Y \label{eqn:m6old} \\
2 \pi &=  2Z+2b_{0, 2} + \cdots + 2b_{q-3, 2}+ a_{q-3, 2}  \label{eqn:m7}\\
2\pi&= 2 c_{j+1, 2}+ a_{j+2, 2}+ a_{j, 2}\text{ for } 0 \leq j \leq q-5 \label{eqn:m8}\\
2\pi &= a_{q-4, 2}+2c_{q-3, 2}\label{eqn:m9}
\end{align}
We may also apply the previous argument that obtained Equation \eqref{eqn:n13}, using Equations \eqref{eqn:m5} through \eqref{eqn:m9} and shape parameter equations, to obtain
\begin{equation} \label{eqn:m6}
W = c_3+a_0
\end{equation}

Suppose that one tetrahedron in the layered solid torus $V_2$ is taut. Then by following the previous argument $a_{0,2}=0$. 
\begin{enumerate}
\item If $b_{0,2}=\pi$, then Equation \eqref{eqn:m7} implies $Z=0$. If $X=\pi$ or $Y=\pi$, then Equation \eqref{eqn:m1} implies $b_0=b_3=U=0$. Therefore, all tetrahedra are flat. 

\item If $c_{0,2}=\pi$, then by following previous arguments $c_{i, 2}=\pi$ for $0\leq i \leq q-4$. Equation \eqref{eqn:m7} implies $Z=\pi$.
Equation \eqref{eqn:m6old} implies $a_0=0$. If $b_0=\pi$, then by Equation \eqref{eqn:m1}, all tetrahedra are flat. If $c_0=\pi$, then Equation \eqref{eqn:m4} implies $a_3=W=0$. Therefore, all tetrahedra are flat. 
\end{enumerate}

Now suppose no tetrahedra in the layered solid torus $V_1$ are flat, but some other tetrahedron is flat. Again we work through cases to obtain a contradiction. 
\begin{enumerate}
\item[(1)] If $c_3, U, V,$ or $X$ are equal to $\pi$ then by Equation \eqref{eqn:m5}, $a_{0,2}=0$. 
\item[(2)] If $Z=\pi$, then by Equation \eqref{eqn:m7}, $b_{0,2}=0$. 
\item[(3)] If $Y=\pi$, then by Equation \eqref{eqn:m6old}, $c_{0,2}=0$. 
\item[(4)] If $W=\pi$, then Equation \eqref{eqn:m3} implies $b_3=V=0$. By case~(1), $c_3\neq \pi$. If $a_3=\pi$, Equation \eqref{eqn:m4} implies $Z=c_0=0$, so $X$ or $Y$ is $\pi$, contradicting case (1) or (3). 
\item[(5)] If $b_0=\pi$, then $b_3=U=X=Y=0$ by Equation \eqref{eqn:m1}. So $Z=\pi$, contradicting case (2).
\item[(6)] If $a_0=\pi$, then Equation \eqref{eqn:m5} implies $c_3=0$ and $W=\pi$, contradicting case (4).
\item[(7)] If $c_0 = \pi$, then by Equations \eqref{eqn:m6}  we have $W=c_3$. Therefore, Equation \eqref{eqn:m5} and the shape parameter equation $U+V+W=\pi$ implies that  $ a_{0,2}=0$.
\item[(8)] If $a_3=\pi$, then Equation \eqref{eqn:m6} implies $W=a_0$. By Equation \eqref{eqn:m4}, $Z=b_0$ and so by Equation \eqref{eqn:m1}, $U=0$. But this implies that $V=\pi$ or $W=\pi$ which is a contradiction of case (1) or (4).
\item[(9)] If $b_3=\pi$, then Equation \eqref{eqn:m6} implies $W=a_0$. By Equation \eqref{eqn:m4}, $\pi+b_0 = Z$ which implies that $Z=\pi$, a contradiction by case (2). 
\end{enumerate}

For $K(p,2)$ where $p>2$, the argument is the same as that for $K(2,q)$. More carefully, switch the pairs of tetrahedra $\Delta_0$ and $\Delta_3$, $\Delta_1$ and $\Delta_5$, $\Delta_2$ and $\Delta_4$, and replace $q$ with $p$, taking the layered solid torus $V_1$ rather than $V_2$. Then up to relabelling, the identical argument as above proves the result for $K(p,2)$. 

For $p=q=2$, rewrite Equations \eqref{eqn:w7}, \eqref{eqn:w8}, \eqref{eqn:2q1}, \eqref{eqn:2q2}, \eqref{eqn:p21}, and \eqref{eqn:p22} as follows:
\begin{align}
2\pi &= W+Z+a_3+c_0 \label{eqn:k221}\\
2 \pi &= 2b_0+b_3+2U+2X+2Y\label{eqn:k222} \\
2 \pi &= b_3+2V+2W\label{eqn:k223}\\
2 \pi &=   a_0 +2X+ 2c_3 +2U+2V \label{eqn:k224}\\
2 \pi&= a_0 +2Y+2Z\label{eqn:k225}
\end{align}

\begin{enumerate}
\item[(1)]  If $X, Y, U, V, c_3$ or $b_0=\pi$, then by Equation \eqref{eqn:k222} and \eqref{eqn:k224} all tetrahedra are flat.
\item[(2)]  If $Z=\pi$, then Equation \eqref{eqn:k225} implies $a_0=0$ so by case (1) we must have $c_0=\pi$. By Equation \eqref{eqn:k221}, $a_3=W=0$ so all  tetrahedra are flat.
\item[(3)]  If $W=\pi$, then Equation \eqref{eqn:k223} implies $b_3=0$ so by case (1) we must have $a_3=\pi$. By Equation \eqref{eqn:k221}, $Z=c_0=0$ so all  tetrahedra are flat. 
\item[(4)]  If $a_3=\pi$, then Equation \eqref{eqn:k223} implies $U=0$ so either $V=\pi$ or  $W=\pi$. By case (1) or (3), all  tetrahedra are flat.
\item[(5)]  If $c_0=\pi$ then Equation \eqref{eqn:k225} implies that $Y=\pi$ or $Z=\pi$. So by case (1) or (2), all  tetrahedra are flat.
\item[(6)] If $b_3=\pi$, then Equation \eqref{eqn:k223} implies $2U=\pi$. Equation \eqref{eqn:k222} implies $Z=\pi$. By case (2), $W=0$, a contradiction to $2U=\pi$ by Lemma \ref{lemma:flatimpliestaut}. 
\item[(7)] If $a_0=\pi$, then Equation \eqref{eqn:k225} implies $2X=\pi$. Equation \eqref{eqn:k224} implies $W=\pi$. By case (3), $Z=0$, a contradiction to $2X=\pi$ by Lemma \ref{lemma:flatimpliestaut}. \qedhere
\end{enumerate}
\end{proof}

\begin{proof}[Proof of Theorem~\ref{Thm:geometric}]
Suppose  $p,q \geq 2$. By Lemma \ref{Lem:nonempty} the space of angle structures $\scrA(\calT_{K(p,q)})$ of the ideal triangulation $\calT_{K(p,q)}$ is non-empty. By Lemma \ref{Lem:maximal}, the volume functional on the closure of this space is maximal in the interior. It follows from Casson and Rivin's theorem~\cite[Theorem~1.2]{FuterGueritaud:Angled} that $\calT_{K(p,q)}$ is a geometric triangulation of the complement of $K(p,q)$. 
\end{proof}

\section{Triangulations and Neumann--Zagier equations}\label{Sec:NeumannZagier}

In this section we show that for any $p$, $q$, the deformation variety of $K(p,q)$ is cut out by eight equations in nine variables. Eliminating all but two variables from these equations gives (a factor of) the $\PSL$ A--polynomial. The tools we use are from \cite{HMP}. We begin by reviewing this material, following the notation of Neumann and Zagier~\cite{NeumannZagier}.

\subsection{Background on Ptolemy equations}
Recall labelled triangulations of Definition~\ref{Def:LabelledTriang}. 
Consider the edge $(v w)$ of a tetrahedron $\Delta$ where $v, w \in \{0,1,2,3\}$. We will denote by $j(v w)$ the index of the edge to which  $(v w)$ is glued. In particular, $(v w)$ is glued to $E_{j(v w)}$.

\begin{definition}
The \emph{$a$-incidence number} ($b$-, $c$-incidence number, respectively) of $E_k$ with the tetrahedron $\Delta_j$, denoted by $a_{k,j}$ ( $b_{k,j}, c_{k,j}$, respectively), is the number of $a$-edges ($b$-, $c$-edges, respectively) of $\Delta_j$ identified to $E_k$.

Let $m_k$ and $l_k$ denote the meridian and longitude of the $k^{th}$ boundary tori. Then the \emph{$a$-incidence number} ($b$-, $c$-incidence number, respectively) of $m_k$ ($l_k$ respectively) with the tetrahedron $\Delta_j$ is the signed number of segments of $m_k$ ($l_k$ respectively) running through a corner of a triangle corresponding to an $a$-edge ($b$-, $c$-edge, respectively) of $\Delta_j$, where the sign is $+1$ if the segment cuts off the corner in the anticlockwise direction, and $-1$ if it cuts off the corner in the clockwise direction. 
The $a$-incidence number ($b$-, $c$-incidence number, respectively) of $m_k$ with the tetrahedron $\Delta_j$ is denoted by $a^m_{k,j}$  ( $b_{k,j}^m, c_{k,j}^m$, respectively). The $a$-incidence number  ($b$-, $c$-incidence number, respectively)  of $l_k$ with the tetrahedron $\Delta_j$ is denoted by $a^l_{k,j}$  ( $b_{k,j}^l, c_{k,j}^l$, respectively).
\end{definition}

\begin{definition}
Let $\mathcal{T}$ be a labelled triangulation of $M$. Assign variables $z_j, z_j',$ and $z_j''$ to the $a$-, $b$-, and $c$- edges of $\Delta_j$, respectively such that  
\begin{equation}\label{eqn:shapeparameters}
z_j z_j' z_j'' = -1 \quad \mbox{ and } \quad z_j + (z_j')^{-1}-1=0
\end{equation}
Equation \eqref{eqn:shapeparameters} are called \emph{shape parameter equations}. If $\Delta_j$ has a hyperbolic structure then the shape parameter equations are standard terahedron parameters as discussed in \cite{thurstonsnotes}. 
\end{definition}

We now consider a link complement $M$ with triangulation $\mathcal{T}$ and choose generators $m_i$ and $l_i$ of  $H_1(\mathbb{T}_i)$ for $1 \leq i \leq {\frak{n}}_{\frak{c}}$ where $\mathbb{T}_i$ is a boundary torus of $\overline{M}$ and ${\frak{n}}_{\frak{c}}$ is the total number of torus boundary components of $\overline{M}$. 

Suppose an ideal triangulation of $M$ has $\frak{n}$ edges, and hence $\frak{n}$ tetrahedra.
For each edge $E_k$, $k=1, \dots, \frak{n}$, the \emph{gluing equation} for edge $E_k$ is
\begin{equation}\label{eq:gluingequation}
\prod_{j=1}^{\frak{n}} z_j^{a_{k,j}} (z_j')^{b_{k,j}}  (z_j'')^{c_{k,j}} =1
\end{equation}
and the \emph{cusp equations} for cusp torus $\mathbb{T}_k$ are
\begin{equation}\label{eqn:cusp}
m_k = \prod_{j=1}^{\frak{n}} z_j^{a_{k,j}^m} (z_j')^{b_{k,j}^m}  (z_j'')^{c_{k,j}^m} \quad \mbox{ and } \quad
l_k = \prod_{j=1}^{\frak{n}} z_j^{a_{k,j}^l} (z_j')^{b_{k,j}^l}  (z_j'')^{c_{k,j}^l}
\end{equation}

Simultaneously solving the shape parameter equations, gluing equations, and cusp equations with $m_k, l_k=1$ gives a complete hyperbolic structure on the given link complement. When there is just one cusp, solving these equations in terms of $m$ and $l$ gives a factor of the $\PSL(2, \CC)$ $A$-polymonial~\cite{ChampApoly}. 

We will change this system of equation by a change of basis, as in~\cite{HMP}. This will replace gluing and cusp equations, which potentially have very high degree in the variables $z_i$, $z_i'$ and $z_i''$, with Ptolemy equations, which have degree at most two in new variables $\lambda_i$, $\lambda_i'$. To do so, we recall information on the Neumann--Zagier data. 

The \emph{incidence matrix} of $M$ with the given triangulation and generators, denoted by $\In$, is the $(\frak{n}+2\frak{n}_{\frak{c}}) \times 3\frak{n}$ matrix whose $k^{th}$ row for $1 \leq k \leq \frak{n}$ is given by
\begin{equation}\label{eqn:incidencerow}
\begin{pmatrix} a_{k,1} & b_{k,1} & c_{k,1}, \dots, a_{k,\frak{n}} & b_{k,\frak{n}} & c_{k,\frak{n}}\end{pmatrix}.
\end{equation}
The next rows are given in pairs:
\begin{equation}\label{eqn:incidencerowml}
\begin{pmatrix} a_{k,1}^m & b_{k,1}^m & c_{k,1}^m, \dots, a_{k,\frak{n}}^m & b_{k,\frak{n}}^m & c_{k,\frak{n}}^m \\ a_{k,1}^l & b_{k,1}^l & c_{k,1}^l, \dots, a_{k,\frak{n}}^l & b_{k,\frak{n}}^l & c_{k,\frak{n}}^l \end{pmatrix}
\end{equation}

The \emph{Neumann--Zagier matrix}, denoted by $\NZ$, is the $(\frak{n}+2\frak{n}_{\frak{c}}) \times 2\frak{n}$ matrix whose first $\frak{n}$ rows 
are given by
\begin{equation}
R_k^G = \begin{pmatrix} a_{k,1}-c_{k,1} & b_{k,1}-c_{k,1}  & \cdots & a_{k,\frak{n}}-c_{k,\frak{n}} & b_{k,\frak{n}}-c_{k,\frak{n}} \end{pmatrix}
\end{equation}
and whose last $2\frak{n}_c$ rows are given by pairs
\begin{equation}
\binom{R^{m}_k}{R^l_k} =
\begin{pmatrix}
  a_{k,1}^m-c_{k,1}^m & b_{k,1}^m-c_{k,1}^m & \cdots & a_{k,\frak{n}}^m-c_{k,\frak{n}}^m & b_{k,\frak{n}}^m-c_{k,\frak{n}}^m  \\
  a_{k,1}^l-c_{k,1}^l & b_{k,1}^l-c_{k,1}^l & \cdots & a_{k,\frak{n}}^l-c_{k,\frak{n}}^l & b_{k,\frak{n}}^l-c_{k,\frak{n}}^l
\end{pmatrix}
\end{equation}

Let $c_k$, $c_i^m$, $c_i^l$ denote the sum of all the c-incidence numbers in a row. That  is,
\[ c_k = \sum_{j=1}^{\frak{n}} c_{k,j}, \quad c_i^m = \sum_{j=1}^{\frak{n}} c_{i,j}^m, \quad c_i^l = \sum_{j=1}^{\frak{n}} c_{i,j}^l, \quad \mbox{for $1 \leq k \leq \frak{n}$ and $1 \leq i \leq \frak{n}_{\frak{c}}$}.
\] 
Define a vector called the \emph{$C$-vector} by 
\begin{equation}
\vec{C}  := \begin{pmatrix}
2-c_1 & \dots & 2-c_{\frak{n}} & -c_1^m & -c_1^l & \dots &  -c_{\frak{n}_{\frak{c}}}^m & -c_{\frak{n}_{\frak{c}}}^l 
\end{pmatrix},
\end{equation}

Neumann showed in \cite[Theorem~2.4]{NeumannNZmatrix} that there exists an integer vector $\vec{B}$ satisfying $\NZ\cdot \vec{B}=\vec{C}$. We call such a vector a \emph{$B$-vector}. 

All of this is equivalent to the Neumann-Zagier datum in \cite{DGNZdatum}. 

We use this to change the variables in the gluing and cusp equations, as in the following theorem. 

\begin{theorem}[Theorem~1.1 of \cite{HMP}]\label{Thm:PtolemyEquations}
Let $M$ be a knot complement with a hyperbolic triangulation $\mathcal{T}$, with associated Neumann--Zagier matrix $\NZ$ and $C$-vector $\vec{C}$. Define formal variables $\gamma_1, \dots, \gamma_n$, one associated with each edge of $\mathcal{T}$. For a tetrahedron $\Delta_j$ of $\mathcal{T}$, and edge $\alpha \beta \in \{ 01, 02, 03, 12, 13, 23\}$, define $\gamma_{j(\alpha \beta)}$ to be the variable $\gamma_k$ such that the edge of $\Delta_j$ between vertices $\alpha$ and $\beta$ is glued to the edge of $\mathcal{T}$ associated with $\gamma_k$. 

For each tetrahedron $\Delta_j$ of $\mathcal{T}$, define \emph{Ptolemy equations} of $\mathcal{T}$ in terms of $\Delta_j$ by 
\[
(-1)^{B_j'} l^{-\mu_j/2} m^{\lambda_j/2} \gamma_{j(01)} \gamma_{j(23)}+(-1)^{B_j} l^{-\mu_j'/2} m^{\lambda_j'/2} \gamma_{j(02)} \gamma_{j(13)}-\gamma_{j(03)} \gamma_{j(12)}=0.
\]

After setting $\gamma_i=$ for some $1 \leq i \leq n$ and solving the system of Ptolemy equations of $\mathcal{T}$ in terms of $m$ and $l$ , we obtain a factor of the $\mathit{PSL}(2, \mathbb{C})$ $A$-polynomial of $K$, $A_K(M,L)$ where $m = M^2$ and $l^{1/2}=-L$.
\end{theorem}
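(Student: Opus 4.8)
\section*{Proof proposal}

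The plan is to realise the Ptolemy equations as the image of the gluing and cusp equations under an explicit multiplicative change of variables. First I would pass to logarithmic form: using the shape parameter relation $z_j z_j' z_j''=-1$ to eliminate each $z_j''$ via $\log z_j'' = i\pi - \log z_j - \log z_j'$, the edge gluing equations~\eqref{eq:gluingequation} and the cusp equations~\eqref{eqn:cusp} become a single linear system whose coefficient matrix is precisely $\NZ$ and whose right-hand side is $i\pi\,\vec{C}$ together with the holonomy terms $\log m$ and $\log l$. This is where the entries $2-c_k$ of $\vec{C}$, and the cusp entries $-c_i^m,-c_i^l$, enter: they record exactly the $i\pi$-contribution left over after the substitution for $\log z_j''$.

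Next I would introduce one formal edge variable $\gamma_k$ per edge $E_k$ and define the shapes as signed Laurent monomials
\[
z_j = (-1)^{B_j'}\, l^{-\mu_j/2} m^{\lambda_j/2}\,\frac{\gamma_{j(01)}\gamma_{j(23)}}{\gamma_{j(03)}\gamma_{j(12)}}, \qquad (z_j')^{-1} = (-1)^{B_j}\, l^{-\mu_j'/2} m^{\lambda_j'/2}\,\frac{\gamma_{j(02)}\gamma_{j(13)}}{\gamma_{j(03)}\gamma_{j(12)}},
\]
where the signs are read off from the $B$-vector guaranteed by Neumann's theorem, and the exponents of $m,l$ come from the cusp block of the inverse transformation. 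With this substitution the shape relation $z_j + (z_j')^{-1} - 1 = 0$, cleared of the common denominator $\gamma_{j(03)}\gamma_{j(12)}$, is literally the asserted Ptolemy equation for $\Delta_j$, so that step is immediate once the substitution is in place.

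The substance of the argument is verifying that this substitution is consistent, i.e.\ that such $(\gamma,m,l)$ genuinely parametrise the solution variety. I would check three things. First, that $z_j z_j' z_j'' = -1$ holds identically; this pins down the sign and $m,l$-exponent data and is precisely where the defining relation $\NZ\cdot\vec{B}=\vec{C}$ is used, since it forces the leftover $i\pi$ terms to cancel coherently across all tetrahedra. Second, that every edge gluing equation~\eqref{eq:gluingequation} is automatically satisfied: when the cross-ratios are raised to the incidence multiplicities $a_{k,j},b_{k,j},c_{k,j}$ and multiplied, each global edge variable $\gamma_k$ occurs with total exponent zero, which is the combinatorial shadow of the fact that the gluing rows of $\NZ$ span an isotropic subspace for the Neumann--Zagier symplectic form. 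Third, that the cusp equations reproduce $m$ and $l$, which follows by tracking the signed corner-incidence numbers $a_{k,j}^m,\dots$ through the same monomials.

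The main obstacle will be the sign and exponent bookkeeping: confirming that the half-integer powers of $m,l$ and the signs $(-1)^{B_j},(-1)^{B_j'}$ dictated by $\vec{B}$ are simultaneously compatible with $z_j z_j' z_j'' = -1$ and with all gluing and cusp equations. This is exactly the delicate Neumann--Zagier symplectic combinatorics, and it is what makes the theorem nontrivial rather than a formal substitution. Once consistency is established, the change of variables is birational, so it carries the solution variety---and hence the factor of $A_K(M,L)$ obtained by eliminating all but $m$ and $l$---isomorphically; the one-dimensional scaling ambiguity among the $\gamma_k$ is removed by normalising $\gamma_i=1$ for a single edge, as in the statement.
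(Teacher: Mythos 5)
This statement is not actually proved in the paper: it is quoted (up to the typo ``$\gamma_i=$'', which should read $\gamma_i=1$) as Theorem~1.1 of \cite{HMP}, and the only argument the paper itself supplies is the remark following the statement explaining why using the full Neumann--Zagier matrix, rather than the reduced matrix of \cite{HMP}, does not change the conclusion. Your outline does track the strategy of the cited proof: substitute signed Laurent monomials in edge variables $\gamma_k$ (signs from the $B$-vector, half-integer powers of $m,l$ from the cusp rows) for the shape parameters, so that $z_j+(z_j')^{-1}-1=0$ clears denominators to the Ptolemy equation, and then check that the gluing and cusp equations become identities.

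As a proof, however, the proposal has a genuine gap, and you name it yourself: the entire content of the theorem sits in the consistency verification you defer. One must actually exhibit exponent data for which $z_jz_j'z_j''=-1$ holds, show that every gluing equation reduces to an identity in the $\gamma_k$ (this uses Neumann's symplectic relations among the rows of $\NZ$), show the cusp equations return $m$ and $l$, and invoke the existence of the integral vector $\vec{B}$ with $\NZ\cdot\vec{B}=\vec{C}$ to make the signs coherent; none of this is carried out. Two further points need care. First, your closing claim that the change of variables is birational and carries the solution variety ``isomorphically'' overstates what is true: the theorem asserts only a \emph{factor} of the $\PSL(2,\CC)$ $A$-polynomial precisely because the correspondence between the Ptolemy variety and the deformation variety is a dominant map on suitable components rather than an isomorphism, and one must also handle solutions with some $\gamma_k=0$ and the substitution $m=M^2$, $l^{1/2}=-L$. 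Second, you do not address the one point the paper does argue, namely that the statement here uses the full (unreduced) Neumann--Zagier matrix, and that appending linearly dependent rows changes neither the $B$-vector entries nor the resulting Ptolemy equations.
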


Note that Theorem~\ref{Thm:PtolemyEquations} differs slightly from \cite{HMP}: There, rather than using the full Neumann--Zagier matrix, a reduced matrix is used, with linearly dependent rows removed. However, note that the entries of such a row do not play any role in the conclusion of the theorem. The $B$-vector has the same entries, since the extra row(s) give redundant gluing equations.

\subsection{A-polynomials of double twist knots}
We use the (conjecturally) minimal triangulation of Section~\ref{Sec:mintriangulation} to compute the defining equations for the A-polynomial of double twist knots $K(p,q)$.

Start by first considering the knot $K(2,2)=7_4$ ($p=2$). The Ptolemy equations of this knot will differ from the rest of the family because the cusp neighbourhood is constructed without the layered solid tori. In particular, it is constructed from Figure~\ref{fig:cuspntwoprismsml} by removing tetrahedra $\Delta_6$, $\Delta_7$, $\Delta_8$, $\Delta_9$ and then folding edges ${\mathcal{O}_0^1}$ and  ${\mathcal{O}_0^2}$. 
Its Neumann--Zagier matrix is given in Table~\ref{Tab:NZMatrix74knot}.

\begin{table}[ht]
\centering
 \begin{tabular}{ c||c|c||c|c||c|c||c|c||c|c||c|c}
     \multicolumn{1}{c}{}&  \multicolumn{2}{c}{ $\Delta_{0}$}  &  \multicolumn{2}{c}{ $\Delta_{1}$}  & \multicolumn{2}{c}{ $\Delta_{2}$}&  \multicolumn{2}{c}{ $\Delta_{3}$}  & \multicolumn{2}{c}{ $\Delta_{4}$}   & \multicolumn{2}{c}{ $\Delta_{5}$}  
     \\ \hline \hline
 $E_0$ &  $-1$ &   $-1$ &  &  & $-1$ & $-1$  &  $1$ &  &  &  & $-1$ &   $-1$ 
\\ \hline 
 $E_1$ &  $-1$  &  $-1$ & $-1$ &  $-1$   &  &  &  $1$  & & $-1$&  $-1$ & &  
\\ \hline 
 ${\calP^1}$   &  &  &  &  &  &  & &  $1$ & $-1$ &    & $-1$ &  
\\ \hline
 ${\mathcal{O}_0^1}$ & &  $2$ &  $1$  &  $1$   &   $1$ &  $1$   &    &  $1$ &  $1$  &   &  $1$ & 
\\ \hline
 ${\calP^2}$  &  $1$ & &  $-1$  &    & &    $-1$ &  &   &  &    &  &   
\\ \hline
 ${\mathcal{O}_0^2}$   &  $1$ &    &   $1$ &   &  $1$ &    &  $-2$ &  $-2$   &  $1$ &  $1$ &   $1$ &  $1$    
\\ \hline
  $m$   &  &  $1$   &    &  & $1$   &  $1$ &  &   $1$  &   &  &  &
\\ \hline
  $l$ &  $2$  &  &  $2$ &   &  $-2$ &  &  $-2$  &   $-2$ &  &  $2$ &  $2$   &  
\\ \hline
\end{tabular}
\caption{Neumann--Zagier matrix of $K(2,2)$.}
\label{Tab:NZMatrix74knot}
\end{table}

The $C$-vector is given by 
\[
\vec{C} = \begin{pmatrix}
-1 &
-1 &
0 &
2 &
0 &
0 &
1 &
0
\end{pmatrix}^T
\]
and the $B$-vector below satisfies $\NZ\cdot \vec{B} = \vec{C}$. 
\[
\vec{B} = \begin{pmatrix}
1 &
0 &
1 &
-1 &
0 &
0 &
0 &
1 &
1 &
-1 &
0 &
0
\end{pmatrix}^T
\]

Then we obtain the following six Ptolemy equations, one for each of the tetrahedra. 
\begin{align*}
\Delta_0 &: m \gamma_{\mathcal{P}^2}\gamma_{\mathcal{O}_0^2} - l^{-1/2} \gamma_{\mathcal{O}_0^1}^2 - \gamma_{E_0}\gamma_{E_1}  \\
\Delta_1 &: -m \gamma_{\mathcal{O}_0^1} \gamma_{\mathcal{O}_0^2} - \gamma_{\mathcal{O}_0^1} \gamma_{\mathcal{P}^2} - \gamma_{E_1} \gamma_{\mathcal{P}^2} \\
\Delta_2 &:  l^{-1/2} m^{-1} \gamma_{\mathcal{O}_0^1} \gamma_{\mathcal{O}_0^2} +  l^{-1/2} \gamma_{\mathcal{O}_0^1} \gamma_{\mathcal{P}^2}-\gamma_{E_0}\gamma_{\mathcal{P}^2} \\
\Delta_3 &:   -m^{-1} \gamma_{E_0} \gamma_{E_1} +  l^{-1/2} m^{-1} \gamma_{\mathcal{O}_0^1} \gamma_{\mathcal{P}^1}-\gamma_{\mathcal{O}_0^2}^2 \\
\Delta_4 &: -   \gamma_{\mathcal{O}_0^1} \gamma_{\mathcal{O}_0^2} -   m^{1} \gamma_{\mathcal{O}_0^2} \gamma_{\mathcal{P}^1}-\gamma_{E_1}\gamma_{\mathcal{P}^1} \\
\Delta_5 &:   m^{1} \gamma_{\mathcal{O}_0^1} \gamma_{\mathcal{O}_0^2} +   \gamma_{\mathcal{P}^1} \gamma_{\mathcal{O}_0^2}-\gamma_{E_0}\gamma_{\mathcal{P}^1} 
\end{align*}

By letting $m = M^2$, $l^{1/2} = -L$, $\gamma_{\calP^2}=1$, and by taking the Gr\"oebner basis of the above Ptolemy equations, we obtain the $\PSL$-$A$-polynomial of $K(2,2)$:
\begin{align*}
   A_{K(2,2)}(L,M) &= (-1 + M) (1 + M) (1 + M^2) (-1 + L M^4)  \\
   & \times (1 - L + L M^2 + 2 L M^4 + L M^6 - L M^8 + L^2 M^8) \\
  & \times (1 - 2 L + L^2 + 6 L M^2 - 2 L^2 M^2 + 2 L M^4 + 3 L^2 M^4 - 7 L M^6 \\
&+ 2 L^2 M^6 + 2 L M^8 - 7 L^2 M^8 + 3 L M^{10} + 2 L^2 M^{10} - 2 L M^{12} + 6 L^2 M^{12} \\
&+ L M^{14} - 2 L^2 M^{14} + L^3 M^{14}).
\end{align*}
The last two factors agree with the $A$-polynomial computed using $SL_2(\mathbb{C})$ representations of $\pi_1(S^3 - K(2,2))$.


For at least one of $p,q \geq 3$, we will compute the $A$-polynomial using results from~\cite{HMP}, using Dehn filling. 

Let $M$ be the complement of the Borromean rings, $M(p,q)$ the complement of $K(p,q)$. As detailed in Section~\ref{Sec:mintriangulation}, we obtain a triangulation of $M(p,q)$ from the 10-tetrahedra triangulation of $M$ in Section~\ref{Sec:10TetB} by first removing $\{ \Delta_6, \Delta_7, \Delta_8, \Delta_9\}$, to obtain a triangulated space $M_0^1$, and then adding in tetrahedra, one tetrahedron per step through the Farey graph in each of two layered solid tori.

Let $M_1^1$ be obtained from $M_0^1$ by attaching the first tetrahedron $\Delta_1^1$ of the first layered solid torus to $M_0^1$. Inductively, let $M_j^1$ be obtained by attaching the $j$-th tetrahedron $\Delta_j^1$ of the first layered solid torus to $M_{j-1}^1$, for $j=1, \dots, p-2$. Let $M_1^2$ be obtained from $M_{p-2}^1$ by attaching the first tetrahedron $\Delta_1^2$ to $M_{p-2}^1$. Inductively, let $M_k^2$ be obtained by attaching the $k$-th tetrahedron $\Delta_k^2$ of the second layered solid torus to $M_{k-1}^2$. 

Let $\NZ$ be the Neumann--Zagier matrix of the Borromean rings complement $M$,  excluding the rows corresponding to the meridian and longitude of each crossing circle cusp, the edge incident to only $\{  \Delta_6, \Delta_7 \}$, and the edge incident to only $\{  \Delta_8, \Delta_9 \}$. Also, let $C$ be the corresponding $C$-vector.

\begin{proposition}\label{Prop:NZBandCvector} \
\begin{enumerate}
\item The Neumann--Zagier matrix $\text{NZ}_0^1$ of $M_0^1$ is obtained from $\NZ$ by deleting the columns corresponding to the removal of tetrahedra
$\{\Delta_6, \Delta_7,  \Delta_8, \Delta_9\}$. 

\item Inductively, the Neumann--Zagier matrix $\NZ_j^1$ of $M_j^1$ is obtained from $\NZ_{j-1}^1$ by adding a pair of columns associated to the tetrahedron $\Delta_j^1$ and a row associated to the edge $\mathcal{O}^1_{j+2}$ for $0 \leq j \leq p-4$. The matrix $\NZ_{p-3}^1$ is obtained from $\NZ_{p-4}^1$ by adding a pair of columns associated to the tetrahedron $\Delta_{p-3}^1$. The entries of the new rows outside of the new columns are zero and the non-zero entries of each column is given in Table~\ref{NZmatrixfilledconstruction2}. For the second layered solid torus, the Neumann--Zagier matrices $\NZ_j^2$ are obtained from $\NZ_{j-1}^2$ in a similar manner. The final matrix is $\NZ_{q-3}^2$. 

\item The $C$-vector associated to $\NZ_{q-3}^2$, denoted by $C(p,q)$, is obtained from $C$ by adding $2$ to the entries corresponding to $\mathcal{O}_0^1$ and $\mathcal{O}_0^2$, subtracting $2$ from the entries corresponding to $\mathcal{O}_1^1$ and $\mathcal{O}_1^2$, and appending $0$'s associated to edges $\mathcal{O}_j^1$ for $2 \leq j \leq p-2$ and $\mathcal{O}_k^2$ for $2 \leq k \leq q-2$. 
\end{enumerate}
\end{proposition}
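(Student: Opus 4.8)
The plan is to treat the entire statement as a bookkeeping result about how the Neumann--Zagier data transforms under two elementary operations: deleting a tetrahedron and layering on a tetrahedron. The guiding principle is that every entry of the incidence matrix, and hence of $\NZ$ and $\vec{C}$, is determined by the \emph{local} $a$-, $b$-, $c$-incidence numbers of a single tetrahedron against a single edge. Consequently, deleting a tetrahedron can only remove its pair of columns and decrement the entries (and $c$-counts) of the edges it met, while layering on a tetrahedron can only append its pair of columns, append a row for any edge it newly creates, and increment the entries of the edges it meets. I would establish this observation first, so that each part reduces to reading incidence numbers off Table~\ref{Tab:BorroTrianTwoPrisms} and the edge equations \eqref{eqn:w1}--\eqref{eqn:w12}.

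For part (1), I would check directly from Table~\ref{Tab:BorroTrianTwoPrisms} that each cap $\{\Delta_6,\Delta_7\}$ (resp.\ $\{\Delta_8,\Delta_9\}$) meets exactly one edge incident to no other tetrahedron --- the central hexagon edge running to the crossing circle cusp in Figure~\ref{fig:cuspntwoprismsml} --- and that these are precisely the two edges excluded in the definition of $\NZ$. Deleting the four cap columns then deletes these two now-empty rows and leaves the incidence numbers of $\Delta_0,\dots,\Delta_5$ against the surviving edges $\mathcal{O}_0^i,\mathcal{O}_1^i,\calP^i,E_0,E_1$ untouched, since those numbers never depended on the caps. This is exactly the assertion that $\NZ_0^1$ is $\NZ$ with the cap columns removed.

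The heart of the argument is part (2), and I would isolate a single layering step. Using Lemma~\ref{lemma:HP} and the standard model of a diagonal exchange, a tetrahedron $\Delta_j^1$ layered onto the $1$-punctured torus has its two $a$-edges glued to the old diagonal $\mathcal{O}_j^1$ (which it covers) and the new diagonal $\mathcal{O}_{j+2}^1$ (which it creates), its $c$-edge pair glued to the retained slope $\mathcal{O}_{j+1}^1$, and its $b$-edge pair glued to the longitude $\calP^1$. Reading this off against the definition of the incidence numbers gives new-column entries $(a-c,b-c)=(1,0),(-2,-2),(1,0),(0,2)$ in rows $\mathcal{O}_j^1,\mathcal{O}_{j+1}^1,\mathcal{O}_{j+2}^1,\calP^1$ and zero elsewhere, matching Table~\ref{NZmatrixfilledconstruction2}; and the freshly created edge $\mathcal{O}_{j+2}^1$ meets only $\Delta_j^1$ at this stage, so its row is zero off the new columns, as claimed. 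I would cross-check this local picture against edge equations \eqref{eqn:w2}, \eqref{eqn:w5}, and \eqref{eqn:w9}, whose $a$- and $c$-coefficients encode exactly these incidences. The final tetrahedron $\Delta_{p-3}^1$ must be handled separately: folding identifies two of its faces rather than exposing a new diagonal, so it contributes a pair of columns but no new edge row, its incidences instead feeding into $\mathcal{O}_{p-2}^1$ and $\calP^1$ in accordance with \eqref{eqn:w11}. The second layered solid torus is identical after replacing superscript $1$ by $2$ and invoking \eqref{eqn:w4}, \eqref{eqn:w6}, \eqref{eqn:w10}, \eqref{eqn:w12}.

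For part (3), I would compute the $C$-vector entry $2-c_k$ for each edge by totalling $c$-incidences before and after. Each newly created edge $\mathcal{O}_j^i$ with $j\geq 2$ receives exactly one $c$-edge pair (from the middle-slope tetrahedron $\Delta_{j-1}^i$), so its total $c$-incidence is $2$ and its appended entry is $2-2=0$. For $\mathcal{O}_0^1$ the caps contributed its only $c$-edges in the Borromean triangulation, while $\Delta_0^1$ meets it only along an $a$-edge; its $c$-count therefore drops by $2$ and its entry rises by $2$, and symmetrically for $\mathcal{O}_0^2$. For $\mathcal{O}_1^1$ the situation is reversed: it had no $c$-edges in the Borromean triangulation, but $\Delta_0^1$ now contributes a $c$-edge pair, so its $c$-count rises by $2$ and its entry drops by $2$, and symmetrically for $\mathcal{O}_1^2$. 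Finally I would verify that every remaining edge --- in particular $\calP^1,\calP^2,E_0,E_1$ --- keeps the same $c$-count, since the caps contribute no $c$-edges to them and the layered tetrahedra meet $\calP^i$ only along $b$-edges (and meet $E_0,E_1$ not at all); these checks are immediate from the absence of new $c$-terms on those edges in \eqref{eqn:w3}--\eqref{eqn:w8}. The main obstacle is exactly this interface bookkeeping at $\mathcal{O}_0^i$ and $\mathcal{O}_1^i$ and at the folded tetrahedron, where the fixed Borromean part and the layered part overlap; getting the cap $c$-contributions and the folding identifications precisely right is where an error would most easily creep in, so I would pin those down by explicit comparison of the Borromean edge equations with \eqref{eqn:w1}, \eqref{eqn:w2}, and \eqref{eqn:w11} rather than by appeal to the general pattern.
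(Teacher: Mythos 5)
Your proposal is correct, but it takes a different route from the paper: the paper's entire proof is a two-sentence citation of \cite[Proposition~3.11]{HMP}, which packages precisely the general statement that deleting a tetrahedron removes its column pair and layering a tetrahedron in a layered solid torus appends a column pair, a row for the newly exposed diagonal, and the fixed local block of incidence numbers. You instead reconstruct that content from first principles, reducing everything to the local $a$-, $b$-, $c$-incidence picture of a single diagonal exchange ($a$-pair split between the covered slope $\calO_j^i$ and the new slope $\calO_{j+2}^i$, $c$-pair on the pivot $\calO_{j+1}^i$, $b$-pair on $\calP^i$) and cross-checking it against the edge equations \eqref{eqn:w2}, \eqref{eqn:w5}, \eqref{eqn:w9}, \eqref{eqn:w11}; this matches Table~\ref{NZmatrixfilledconstruction2} exactly, and your $c$-count bookkeeping for part~(3) reproduces the stated changes at $\calO_0^i$ and $\calO_1^i$ and the appended zeros. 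What your approach buys is self-containedness and an explicit verification that this triangulation really is an instance of the cited general result (including the folding step, which you correctly treat as contributing columns but no new row); what the paper's approach buys is brevity and the reassurance that the pattern is not specific to the Borromean rings.

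Two small points to tighten. First, in part~(3) your parenthetical claim that the layered tetrahedra meet $\calP^i$ only along $b$-edges is not quite right: the folded tetrahedron $\Delta_{p-3}^1$ also contributes an $a$-edge to $\calP^1$ (the term $a_{p-3,1}$ in \eqref{eqn:w5}), exactly because $\calO_{p-1}^1$ is identified with $\calP^1$; this does not affect the $c$-count, so your conclusion stands, but the justification should say ``contributes no $c$-edges'' rather than ``only $b$-edges''. Second, the claim that $C(p,q)$ is obtained from $C$ by \emph{only} the listed modifications implicitly asserts that the cusp entries $-c^m$, $-c^l$ for the knot-strand cusp are unchanged; you should add the (easy) check that the chosen meridian and longitude avoid the corners of both the deleted cap tetrahedra and the newly layered tetrahedra, as is visible in Figures~\ref{fig:cuspntwoprismsml} and~\ref{fig:MinConstructionHexagon} and reflected in the empty $m$, $l$ entries of those columns.
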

\begin{table}[H]
  \begin{subtable}{.4\textwidth}
    \centering
    \begin{tabular}{ l||c|c}
      \multicolumn{1}{l}{}&  \multicolumn{2}{c}{ $\Delta_{j}^i$}  \\ \hline\hline
      $\mathcal{P}^i$ & & $2$  \\ \hline
      $\mathcal{O}_{j}^i$ & $1$ &   \\ \hline
      $\mathcal{O}_{j+1}^i$ & $-2$  & $-2$ \\ \hline
      $\mathcal{O}_{j+2}^i$ &  $1$&   
    \end{tabular}
    \caption{$i=1$ and $0 \leq j \leq p-4$  or $i=2$ and $0\leq j\leq q-4$}
    \label{table:newcolumns}
  \end{subtable}
  \begin{subtable}{.4\textwidth}
    \centering
    \begin{tabular}{ l||c|c}
      \multicolumn{1}{l}{}&  \multicolumn{2}{c}{ $\Delta_{j}^i$}   \\ \hline \hline
      $\mathcal{P}^i$ &  $1$ & $2$   \\ \hline
      $\mathcal{O}_{j}^i$ & $1$    \\ \hline
      $\mathcal{O}_{j+1}^i$  & $-2$  & $-2$ 
    \end{tabular}
    \caption{$i=1$ and $j=p-3$ or $i=2$ and $j = q-3$}
  \end{subtable}
  \caption{Non-zero entries in the columns associated to $\Delta_{\mathcal{O}_j^i}$.}
  \label{NZmatrixfilledconstruction2}
\end{table}

\begin{proof}
The change to $\NZ$ when the triangulation is given by Dehn filling using a layered solid torus follows from \cite[Proposition~3.11]{HMP}. In our case, we start with the 10-tetrahedron triangulation of the Borromean rings complement, and Dehn fill using the layered solid torus as described in Section~\ref{Sec:mintriangulation}. 
\end{proof}

\begin{lemma}\label{Lem:CandBvectors}
The $C$- and $B$- vectors for $S^3-K(p,p)$ obtained from performing Dehn filling on the cusps of the two crossing circles from are

\begin{align}
  C &= \begin{pmatrix}
    -1 &
    -1 &
    0 &
    2 &
    0 &
    0 &
    0 &
    \cdots &
    0 &
    1 &
    0
  \end{pmatrix}^T \text{ and }  \label{eqn:Cvect} \\
  B &= \begin{pmatrix}
    0 &
    1 &
    0 &
    0 &
    0 &
    0 &
    0 &
    0 &
    0 &
    0 &
    \cdots&
    0
  \end{pmatrix}^T \label{eqn:Bvect}
\end{align}
\end{lemma}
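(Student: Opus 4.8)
The plan is to verify each of the two claimed vectors directly against its definition rather than re-deriving them abstractly. The $C$-vector is the quicker of the two. By definition its entry at an edge $E_k$ is $2-c_k$, where $c_k$ is the number of $c$-edges identified to $E_k$, and its entries at the single cusp are $-c^m$ and $-c^l$. The edge equations \eqref{eqn:w1}--\eqref{eqn:w12}, specialised to $q=p$, list exactly the angles around each edge, so I would read $c_k$ off as the number of $c$-type angles occurring: three at $E_0$ and $E_1$, none at $\mathcal{O}_0^1$, and two at every other edge ($\mathcal{P}^1,\mathcal{P}^2,\mathcal{O}_0^2$, and all $\mathcal{O}_k^i$ with $k\geq 1$). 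This produces the entries $-1,-1,0,2,0,0,0,\dots,0$ of \eqref{eqn:Cvect}; equivalently one may quote Proposition~\ref{Prop:NZBandCvector}(3) with $q=p$. The final two entries $1,0$ come from the signed $c$-incidences of the chosen meridian and longitude, read from the cusp triangulation of Figure~\ref{fig:cuspntwoprismsml}.

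For the $B$-vector the governing observation is that $\vec B=e_2$, so $\NZ_{p-3}^2\cdot\vec B$ is literally the second column of $\NZ_{p-3}^2$. By the pairing convention on the columns, this column is the vector of $b_{k,0}-c_{k,0}$ over all edges $E_k$, together with $b_0^m-c_0^m$ and $b_0^l-c_0^l$ at the cusp; in other words it is the $(b-c)$-incidence record of the single tetrahedron $\Delta_0$. The whole statement therefore reduces to checking that this column equals $\vec C$. From \eqref{eqn:w1} the coefficient of $b_0$ is $2$ and $b_0$ occurs in no other edge equation, so both $b$-edges of $\Delta_0$ are glued to $\mathcal{O}_0^1$; from \eqref{eqn:w7} and \eqref{eqn:w8} the two $c$-edges of $\Delta_0$ are glued to $E_0$ and $E_1$. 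Since $\Delta_0$ lies outside both layered solid tori it has no incidence with any new edge $\mathcal{O}_k^i$, $k\geq 2$. Hence the column has $b-c=2$ at $\mathcal{O}_0^1$, $b-c=-1$ at $E_0$ and $E_1$, and $0$ at every other edge---exactly the edge entries of $\vec C$.

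The remaining, and genuinely delicate, step is the two cusp rows: I must confirm $b_0^m-c_0^m=1$ and $b_0^l-c_0^l=0$, i.e.\ that the $\Delta_0$ contribution alone reproduces the cusp entries of $\vec C$. I would compute these signed incidences directly from Figure~\ref{fig:cuspntwoprismsml} with the marked meridian and longitude, using the fact that the three corners of $\Delta_0$ at the knot-strand cusp, and the arcs of $\mu$ and $\lambda$ crossing them, sit in the region of the cusp triangulation that is untouched by layering---layering only alters the interior of each hexagon, where the ideal point of the new solid torus appears. Consequently these two incidence numbers are independent of $p$ and agree with the values $1$ and $0$ already visible in the $K(2,2)$ matrix of Table~\ref{Tab:NZMatrix74knot}. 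Establishing this $p$-independence carefully is where I expect the real work to lie; once it is in place, the second column of $\NZ_{p-3}^2$ coincides with $\vec C$, so $\NZ_{p-3}^2\cdot\vec B=\vec C$ and $\vec B$ is a valid $B$-vector, completing the proof.
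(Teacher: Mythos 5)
Your proposal is correct and follows essentially the same route as the paper: both arguments reduce the claim to the observation that $\vec B=e_2$ picks out the second column of the Neumann--Zagier matrix (the $(b-c)$-incidence record of $\Delta_0$), obtain the $C$-vector via Proposition~\ref{Prop:NZBandCvector}, and check that the column and the vector coincide. The only difference is presentational --- you verify just the needed column from the edge equations and Figure~\ref{fig:cuspntwoprismsml}, whereas the paper tabulates the full matrix (Table~\ref{NZbeforedehncontruction2}) and reads the column off from there; your flagged cusp-row check is asserted at the same level of detail in the paper's $m$ and $l$ rows.
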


\begin{proof}
The Neumann--Zagier matrix $\NZ$ associated to the triangulated Borromean rings complement of Section~\ref{Sec:10TetB} is given in Table~\ref{NZbeforedehncontruction2}.

{\centering
  \resizebox{\columnwidth}{!}{%
    \renewcommand*{\arraystretch}{1}
    \begin{tabular}{ c||c|c||c|c||c|c||c|c||c|c||c|c|||c|c||c|c||c|c||c|c|}
      \multicolumn{1}{c}{}&  \multicolumn{2}{c}{\fontsize{6}{6}$\Delta_{0}$}  &  \multicolumn{2}{c}{\fontsize{6}{6}$\Delta_{1}$}  & \multicolumn{2}{c}{\fontsize{6}{6}$\Delta_{2}$}&  \multicolumn{2}{c}{\fontsize{6}{6}$\Delta_{3}$}  & \multicolumn{2}{c}{\fontsize{6}{6}$\Delta_{4}$}   & \multicolumn{2}{c}{\fontsize{6}{6}$\Delta_{5}$}   & \multicolumn{2}{c}{\fontsize{6}{6}$\Delta_{6}$} &
      \multicolumn{2}{c}{\fontsize{6}{6}$\Delta_{7}$} &
      \multicolumn{2}{c}{\fontsize{6}{6}$\Delta_{8}$} & \multicolumn{2}{c}{\fontsize{6}{6}$\Delta_{9}$}\\ \hline \hline
      \fontsize{6}{6}$E_0$ & \fontsize{6}{6}$-1$ & \fontsize{6}{6}$-1$  &  &  &\fontsize{6}{6}$-1$ &\fontsize{6}{6}$-1$  & \fontsize{6}{6}$1$ &  &  &  &\fontsize{6}{6}$-1$ &  \fontsize{6}{6}$-1$&  &  &  &  &  &  &  & \\ \hline 
      \fontsize{6}{6}$E_1$ & \fontsize{6}{6}$-1$ &\fontsize{6}{6}$-1$ &\fontsize{6}{6}$-1$ & \fontsize{6}{6}$-1$   &  &  & \fontsize{6}{6}$1$  & &\fontsize{6}{6}$-1$& \fontsize{6}{6}$-1$ & &  &  &  &  &  &  &  & & \\ \hline 
      \fontsize{6}{6}$\calP^1$   &  &  &  &  &  &  & &  &\fontsize{6}{6}$-1$ &  \fontsize{6}{6}$-1$  &\fontsize{6}{6}$-1$ &  \fontsize{6}{6}$-1$ &   & &  &  &  \fontsize{6}{6}$1$ & & \fontsize{6}{6}$1$  &   \\ \hline
      \fontsize{6}{6}$\calO_0^1$ &  & \fontsize{6}{6}$2$ & \fontsize{6}{6}$1$  & \fontsize{6}{6}$1$   &  \fontsize{6}{6}$1$ & \fontsize{6}{6}$1$   &    & \fontsize{6}{6}$1$ & \fontsize{6}{6}$1$  &   & \fontsize{6}{6}$1$ &  &  &    &  &   &  \fontsize{6}{6}$-1$ & \fontsize{6}{6}$-1$ & \fontsize{6}{6}$-1$  & \fontsize{6}{6}$-1$  \\ \hline
      \fontsize{6}{6}$\calO_1^1$ &   &  &  &  &  &  &  & \fontsize{6}{6}$1$  &  &  \fontsize{6}{6}$1$   &    & \fontsize{6}{6}$1$ &  &  &  &  &   & \fontsize{6}{6}$1$ &  & \fontsize{6}{6}$1$ \\ \hline 
      \fontsize{6}{6}$\calP^2$  & & & \fontsize{6}{6}$-1$  &  \fontsize{6}{6}$-1$  &\fontsize{6}{6}$-1$  &   \fontsize{6}{6}$-1$ &  &   &  &    &  &   & \fontsize{6}{6}$1$ &   & \fontsize{6}{6}$1$ &   &  &  &  &  \\ \hline
      \fontsize{6}{6}$\calO_0^2$   & \fontsize{6}{6}$1$  &   &  \fontsize{6}{6}$1$ &   & \fontsize{6}{6}$1$ &    & \fontsize{6}{6}$-2$ & \fontsize{6}{6}$-2$   & \fontsize{6}{6}$1$ & \fontsize{6}{6}$1$ &  \fontsize{6}{6}$1$ & \fontsize{6}{6}$1$     &  \fontsize{6}{6}$-1$ & \fontsize{6}{6}$-1$ &  \fontsize{6}{6}$-1$&  \fontsize{6}{6}$-1$  &   &   &  &  \\ \hline
      \fontsize{6}{6}$\calO_1^2$  & \fontsize{6}{6}$1$ &  &  &  \fontsize{6}{6}$1$ &    & \fontsize{6}{6}$1$ &    &  & &  &  &  &  &  \fontsize{6}{6}$1$ &   & \fontsize{6}{6}$1$ &   &  &    &  \\ \hline
      \fontsize{6}{6} $m$   &  & \fontsize{6}{6}$1$  &    &  &\fontsize{6}{6}$1$   & \fontsize{6}{6}$1$ &  &  \fontsize{6}{6}$1$  &   &  &  &   &  &  &  &    &   &   &  &  \\
      \hline
      \fontsize{6}{6} $l$ & \fontsize{6}{6}$2$ &   & \fontsize{6}{6}$2$ &   & \fontsize{6}{6}$-2$ &  & \fontsize{6}{6}$-2$  &  \fontsize{6}{6}$-2$ &  & \fontsize{6}{6}$2$ & \fontsize{6}{6}$2$   &  &  &  &  &  &  &  &   &    \\
      \hline
\end{tabular}}}
\captionof{table}{Neumann-Zagier matrix NZ before Dehn filling.} \label{NZbeforedehncontruction2}

The corresponding $C$-vector is
$C' = \begin{pmatrix} -1 &  -1 &  0 &  0 &  2 &  0 &  -2 &  2 &  1 &  0 \end{pmatrix}^T $. A $B$-vector $B'$ satisfying $\NZ\cdot B'=C'$ is given by
\begin{equation}\label{Bvector1}
  B' = \begin{pmatrix}
    2 &
    -1 &
    0 &
    0 &
    0 &
    0 &
    0 &
    2 &
    0 &
    0 &
    0 &
    0 &
    0 &
    0 & 
    0 &
    0 &
    0 &
    0 &
    0 &
    0
  \end{pmatrix}^T
\end{equation}

By Proposition~\ref{Prop:NZBandCvector}, the $C$-vector associated to $\NZ_{q-3}^2$ is that of Equation~\eqref{eqn:Cvect}. 

We have that $\NZ_{q-3}^2$ for $p\geq 3$ is obtained by removing the columns associated to tetrahedra $\{\Delta_6, \Delta_7, \Delta_8 \Delta_9\}$ then adding rows and columns associated to tetrahedra $\Delta_j^i$ where the non-zero entries of the added rows and columns are given in Table~\ref{NZmatrixfilledconstruction2}. Therefore, column 2 of $NZ(r)$ is $C$. It follows that a $B$-vector is given by Equation~\eqref{eqn:Bvect}. 
\end{proof}

\begin{theorem}
For $p, q\geq 3$, the following 
equations are defining equations for the $\PSL$-$A$-polynomial of the double twist knot $K(p,q)$, where $0 \leq j < p-3$ and $0 \leq i < q-3$.
\begin{align} 
  \Delta_0 \: &:\: - m \gamma_{\mathcal{O}_0^2} \gamma_{\mathcal{O}_1^2} +  l^{-1/2}  \gamma_{\mathcal{O}_0^1}^2 -\gamma_{E_0}\gamma_{E_1} \label{Ptolemyoutside1}\\
  \Delta_1 \: & :\: m \gamma_{\mathcal{O}_0^1} \gamma_{\mathcal{O}_0^2} + \gamma_{\mathcal{O}_0^1} \gamma_{\mathcal{O}_1^2}-\gamma_{E_1}\gamma_{\calP^2} \\
  \Delta_2 \: & :\:  l^{-1/2} m^{-1} \gamma_{\mathcal{O}_0^1} \gamma_{\mathcal{O}_0^2} +  l^{-1/2} \gamma_{\mathcal{O}_0^1} \gamma_{\mathcal{O}_1^2}-\gamma_{E_0}\gamma_{\calP^2} \\
  \Delta_3 \: & :\:    m^{-1} \gamma_{E_0} \gamma_{E_1} +  l^{-1/2} m^{-1} \gamma_{\mathcal{O}_0^1} \gamma_{\mathcal{O}_1^1}-\gamma_{\mathcal{O}_0^2}^2 
\end{align}
\begin{align}
  \Delta_4 \: & :\:    \gamma_{\mathcal{O}_0^1} \gamma_{\mathcal{O}_0^2} +  m \gamma_{\mathcal{O}_1^1} \gamma_{\mathcal{O}_0^2}-\gamma_{E_1}\gamma_{\calP^1} \\
  \Delta_5 \: & :\:   m \gamma_{\mathcal{O}_0^1} \gamma_{\mathcal{O}_0^2} +  \gamma_{\mathcal{O}_1^1} \gamma_{\mathcal{O}_0^2}-\gamma_{E_0}\gamma_{\calP^1} \label{Ptolemyoutside6}\\
  \Delta_j^1 \: & :\: \gamma_{\mathcal{O}_j^1}\gamma_{\mathcal{O}_{j+2}^1} + \gamma_{\mathcal{P}^1}^2 - \gamma_{\mathcal{O}_{j+1}^1}^2 \quad j=0, \dots, p-4 \label{Ptolemyfilled1}\\
  \Delta_i^2 \: & :\: \gamma_{\mathcal{O}_i^2}\gamma_{\mathcal{O}_{i+2}^2} + \gamma_{\mathcal{P}^2}^2 - \gamma_{\mathcal{O}_{i+1}^2}^2 \quad i=0, \dots, q-4 \\
  \Delta_{p-3}^1 \: & :\:   \gamma_{\calP^1} \gamma_{\mathcal{O}_{p-3}^1} +  \gamma_{\calP^1}^2-\gamma_{\mathcal{O}_{p-2}^1}^2 \\
  \Delta_{q-3}^2 \: & :\:   \gamma_{\calP^2} \gamma_{\mathcal{O}_{q-3}^2} + \gamma_{\calP^2}^2-\gamma_{\mathcal{O}_{q-2}^2}^2 \label{Ptolemyfilled4}
\end{align}
\end{theorem}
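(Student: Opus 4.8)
The plan is to invoke Theorem~\ref{Thm:PtolemyEquations} directly, so the first step is to verify its hypotheses. Theorem~\ref{Thm:geometric} guarantees that $\calT_{K(p,q)}$ is a geometric, hence hyperbolic, ideal triangulation of the knot complement, so Theorem~\ref{Thm:PtolemyEquations} applies and its Ptolemy equations --- one per tetrahedron --- are defining equations for (a factor of) the $\PSL$-$A$-polynomial. Everything then reduces to computing, for each tetrahedron $\Delta_j$, the six edge variables $\gamma_{j(\alpha\beta)}$, the sign exponents $B_j,B_j'$, and the monomials $l^{-\mu_j/2}m^{\lambda_j/2}$ and $l^{-\mu_j'/2}m^{\lambda_j'/2}$, and then substituting into the formula of Theorem~\ref{Thm:PtolemyEquations}.

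First I would fix a labelled triangulation in the sense of Definition~\ref{Def:LabelledTriang}: order the tetrahedra as $\Delta_0,\dots,\Delta_5,\Delta_0^1,\dots,\Delta_{p-3}^1,\Delta_0^2,\dots,\Delta_{q-3}^2$, order the edges, and choose an oriented vertex labelling of each tetrahedron compatible with the positively oriented geometric structure. For the six ``outside'' tetrahedra $\Delta_0,\dots,\Delta_5$ the face pairings and edge identifications are read off Table~\ref{Tab:BorroTrianTwoPrisms}; for the layered-solid-torus tetrahedra $\Delta_j^1,\Delta_i^2$ they are read off the construction of Section~\ref{Sec:mintriangulation} (cf.\ Figure~\ref{fig:MinConstructionHexagon}). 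This identifies, for each $\Delta_j$, which edge of $\calT_{K(p,q)}$ each $a$-, $b$-, and $c$-edge is glued to, and hence determines the placement of the variables $\gamma_{E_0},\gamma_{E_1},\gamma_{\calP^1},\gamma_{\calP^2}$ and the $\gamma_{\calO}$'s in each equation. In particular, for a solid-torus tetrahedron both $b$-edges glue to $\calP^i$ and both $c$-edges glue to a single $\calO$-edge, which is what produces the squared terms in \eqref{Ptolemyfilled1}--\eqref{Ptolemyfilled4}.

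Next I would assemble the Neumann--Zagier data. The matrix $\NZ_{q-3}^2$ and its $C$-vector are given by Proposition~\ref{Prop:NZBandCvector} (via Table~\ref{NZbeforedehncontruction2} and Table~\ref{NZmatrixfilledconstruction2}), and the argument of Lemma~\ref{Lem:CandBvectors} shows $B=(0,1,0,\dots,0)^T$. That computation only compares the second column of $\NZ_{q-3}^2$ with the $C$-vector of Equation~\eqref{eqn:Cvect}, neither of which uses $p=q$, so it extends verbatim to all $p,q\geq 3$. Since $B=e_2$, every sign exponent $B_j,B_j'$ vanishes except the $b$-entry of $\Delta_0$, which equals $1$; this pins down all the signs, and in particular the single leading minus sign on the $a$-term of the $\Delta_0$ equation \eqref{Ptolemyoutside1}. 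The exponents of $l$ and $m$ come from the meridian and longitude rows of $\NZ_{q-3}^2$: these rows are supported on the columns of $\Delta_0,\dots,\Delta_5$ (Table~\ref{NZbeforedehncontruction2}) and vanish on the solid-torus columns (Table~\ref{NZmatrixfilledconstruction2}), which is exactly why \eqref{Ptolemyoutside1}--\eqref{Ptolemyoutside6} carry powers of $l,m$ while \eqref{Ptolemyfilled1}--\eqref{Ptolemyfilled4} do not. Substituting all of this into the formula of Theorem~\ref{Thm:PtolemyEquations} yields precisely equations \eqref{Ptolemyoutside1}--\eqref{Ptolemyfilled4}.

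The main obstacle is the bookkeeping of the second step: choosing oriented labellings of all the tetrahedra consistently, and tracking through the diagonal exchanges of the layered-solid-torus construction exactly which edge of $\calT_{K(p,q)}$ each tetrahedron edge is glued to, so that the $\gamma_{j(\alpha\beta)}$ land in the correct positions and the three signs per equation come out consistent with $B=e_2$. Once the labelling is fixed, reading off the $l,m$ exponents from the cusp rows and verifying the signs is routine.
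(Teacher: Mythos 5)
Your proposal follows essentially the same route as the paper's proof: invoke Theorem~\ref{Thm:PtolemyEquations} with the Neumann--Zagier data assembled from Proposition~\ref{Prop:NZBandCvector} (Tables~\ref{NZbeforedehncontruction2} and~\ref{NZmatrixfilledconstruction2}) and the $B$-vector of Lemma~\ref{Lem:CandBvectors}, then read off one Ptolemy equation per tetrahedron. Your additional observations --- that the geometricity of $\calT_{K(p,q)}$ supplies the hypothesis of Theorem~\ref{Thm:PtolemyEquations}, that the $B$-vector computation extends verbatim beyond $K(p,p)$, and that the vanishing of the cusp rows on the layered-solid-torus columns explains the absence of $l,m$ in \eqref{Ptolemyfilled1}--\eqref{Ptolemyfilled4} --- are correct and consistent with the paper's (terser) argument.
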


\begin{proof}
The Ptolemy equations corresponding to tetrahedra $\Delta_0, \dots, \Delta_5$, outside the layered solid tori, are obtained from Theorem~\ref{Thm:PtolemyEquations} using the $B$-vector given in Lemma~\ref{Lem:CandBvectors} and the first twelve columns in Table~\ref{NZbeforedehncontruction2}.  The Ptolemy equations corresponding to tetrahedra $\Delta_j^i$ in the layered solid tori are obtained from Theorem~\ref{Thm:PtolemyEquations} using the $B$-vector given in Lemma~\ref{Lem:CandBvectors} and the adjusted Neumann--Zagier matrix $\NZ_{q-3}^2$ obtained by applying Proposition~\ref{Prop:NZBandCvector} to Table~\ref{NZmatrixfilledconstruction2}.
\end{proof}

\begin{example}
For $p=3$ and $q=3$, the defining equations for the $A$-polynomial are the six Ptolemy equations given in Equations~\eqref{Ptolemyoutside1} through \eqref{Ptolemyoutside6} and two Ptolemy equations
\begin{eqnarray*} 
  \Delta_0^1 \: & :\:   \gamma_{\calP^1} \gamma_{\mathcal{O}_{0}^1} +  \gamma_{\calP^1}^2-\gamma_{\mathcal{O}_{1}^1}^2 \\
  \Delta_0^2 \: & :\:   \gamma_{\calP^2} \gamma_{\mathcal{O}_{0}^2} + \gamma_{\calP^2}^2-\gamma_{\mathcal{O}_{1}^2}^2
\end{eqnarray*}

By letting $m=M^2, l^{1/2} = -L$, $\gamma_{\mathcal{O}_1^2}  =1$ and by taking the Groebner basis of the defining equations, we obtain the following equation.

\begin{align*}
  A_{K(3,3)}(M, L) &= (1 + L M^2)^3 \\
  &(1 - 4 L + 6 L^2 - 4 L^3 + L^4 + 12 L M^2 - 
  26 L^2 M^2 + 16 L^3 M^2 - 2 L^4 M^2 \\
  & \quad - 2 L M^4 + 26 L^2 M^4 -   18 L^3 M^4 + 3 L^4 M^4 - 3 L M^6 + 36 L^2 M^6 - L^3 M^6 \\
  & \quad  -   4 L^4 M^6 + 8 L M^8 - 31 L^2 M^8 + 32 L^3 M^8 + 5 L^4 M^8 - 
  13 L M^{10} \\
  & \quad - 20 L^2 M^{10} - 6 L^3 M^{10} + 4 L^4 M^{10} + 4 L M^{12} - 
  6 L^2 M^{12} - 20 L^3 M^{12} \\
  & \quad - 13 L^4 M^{12} + 5 L M^{14} + 32 L^2 M^{14} - 
  31 L^3 M^{14} + 8 L^4 M^{14} - 4 L M^{16} \\
  & \quad - L^2 M^{16} + 36 L^3 M^{16} - 
  3 L^4 M^{16} + 3 L M^{18} - 18 L^2 M^{18} + 26 L^3 M^{18} \\
  & \quad - 2 L^4 M^{18} - 
  2 L M^{20} + 16 L^2 M^{20} - 26 L^3 M^{20} + 12 L^4 M^{20} + L M^{22} \\
  & \quad  - 
  4 L^2 M^{22} + 6 L^3 M^{22} - 4 L^4 M^{22} + L^5 M^{22}) 
\end{align*}
\begin{align*}
  & (1 - 4 L + 
  6 L^2 - 4 L^3 + L^4 + 6 L M^2 - 15 L^2 M^2 + 12 L^3 M^2 - 
  3 L^4 M^2 \\
  & \quad + 6 L M^4 - 7 L^2 M^4 + L^4 M^4 - 3 L M^6 + 25 L^2 M^6 - 
  26 L^3 M^6 + 5 L^4 M^6 \\
  & \quad + 11 L^2 M^8 - 5 L^4 M^8 + 3 L M^{10} - 
  6 L^2 M^{10} + 24 L^3 M^{10} - 6 L^4 M^{10} \\
  & \quad  - 2 L M^{12} + 8 L^2 M^{12} + 
  8 L^3 M^{12} + 8 L^4 M^{12} - 2 L^5 M^{12} - 6 L^2 M^{14} \\
  & \quad + 24 L^3 M^{14} -  6 L^4 M^{14} + 3 L^5 M^{14} - 5 L^2 M^{16} + 11 L^4 M^{16} + 5 L^2 M^{18}\\
  & \quad  -   26 L^3 M^{18} + 25 L^4 M^{18} - 3 L^5 M^{18} + L^2 M^{20} - 7 L^4 M^{20} \\
  &  \quad +    6 L^5 M^{20} - 3 L^2 M^{22} + 12 L^3 M^{22} - 15 L^4 M^{22} + 6 L^5 M^{22} + 
  L^2 M^{24} \\
  &  \quad  - 4 L^3 M^{24} + 6 L^4 M^{24} - 4 L^5 M^{24} + L^6 M^{24}).
\end{align*}

The last two factors agree with the $A$-polynomial computed using $SL_2(\mathbb{C})$ representations of $\pi_1(S^3 - K_{3,3})$.
\end{example}

\begin{theorem}[Theorem~3.11 of \cite{ThompsonApoly}]\label{Thm:layeredsolidtoruspoly}
Suppose a knot is obtained from a link complement by Dehn filling using a layered solid torus. Suppose the tail of the layered solid torus has length $n \geq 1$ and begins at step $k$, and suppose the folding equation corresponds to the tip being in the same direction as the tail. Consider the tetrahedron with slopes $(o_k, f_k, p_k, h_k)$ obtained from taking a walk in the Farey graph from triangle $(o_k, f_k, p_k)$ to triangle $(f_k, p_k, h_k)$ where $p_k$ is the pivot slope, that is, $p_k = p_{k+1} = \cdots = p_{k+n-1}$. The folding equation, along with the set of tail equations, is equivalent to the equation
\begin{equation}
  H_n - \gamma_{f_k}^{n-1} \gamma_{o_k}^n\gamma_{p_k} =0,
\end{equation}
where 
\begin{equation}
  H_n = \gamma_{f_k}^{2n} + \sum_{a+b \leq n-1} (-1)^{n-a-b}\binom{n-1-a}{b} \binom{n-b}{a} \gamma_{f_k}^{2a} \gamma_{o_k}^{2b}  \gamma_{p_k}^{2(n-a-b)}.
\end{equation}
\end{theorem}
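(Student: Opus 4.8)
The plan is to regard the tail Ptolemy equations as a single recursion in the edge variables, to solve that recursion in closed form, and then to recognise the closed form as the polynomial $H_n$; the folding equation then becomes one boundary condition that collapses the whole system to the stated equation.

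First I would fix notation adapted to the constant pivot. Since $p_k=p_{k+1}=\cdots$ is fixed, the walk in the Farey graph produces an ordered fan of slopes $s_0=o_k,\ s_1=f_k,\ s_2=h_k,\ s_3,\dots$, every consecutive triple sharing the pivot. Writing $x_i:=\gamma_{s_i}$ and abbreviating $o:=\gamma_{o_k}$, $f:=\gamma_{f_k}$, $w:=\gamma_{p_k}$, each tail tetrahedron contributes a Ptolemy equation of the uniform shape
\[ x_i\,x_{i+2}+w^2-x_{i+1}^2=0,\qquad 0\le i\le n-2, \]
the trivial $m,l$-factors and the signs being forced because these tetrahedra lie inside the solid torus and do not wind around the cusp (this is precisely the form of Equation~\eqref{Ptolemyfilled1}). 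The folding equation is the same relation at index $i=n-1$ with the would-be new slope $s_{n+1}$ replaced by the pivot; equivalently, extending the recursion one step via $x_{n-1}x_{n+1}=x_n^2-w^2$, it is the single boundary condition $x_{n+1}=w$.

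Next I would solve the recursion. The relations force the discrete Casoratian $x_i^2-x_{i-1}x_{i+1}=w^2$ to be constant in $i$, and subtracting consecutive relations shows that $(x_{i+1}+x_{i-1})/x_i$ is also constant; hence the $x_i$ satisfy a three-term linear recursion $x_{i+1}=\tau\,x_i-x_{i-1}$ with $\tau=(o^2+f^2-w^2)/(of)$. Solving with $x_0=o$, $x_1=f$ in terms of Chebyshev polynomials of the second kind gives $x_{m+1}=f\,U_m(\tau/2)-o\,U_{m-1}(\tau/2)$. Setting $\widehat H_m:=o^m f^{m-1}x_{m+1}$ and clearing the factors of $of$ hidden in $\tau$ turns this into a genuine polynomial in $o,f,w$; the constant Casoratian then translates into the bilinear recursion
\[ \widehat H_m\,\widehat H_{m-2}=\widehat H_{m-1}^2-w^2\,o^{2(m-1)}f^{2(m-2)},\qquad \widehat H_0=1,\ \widehat H_1=f^2-w^2. \]

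To finish I would prove $\widehat H_m=H_m$ by induction, checking that the binomial-coefficient polynomial $H_m$ obeys the same initial data and the same bilinear recursion. Granting this, the boundary condition $x_{n+1}=w$ reads $H_n/(o^n f^{n-1})=w$, that is $H_n-\gamma_{f_k}^{n-1}\gamma_{o_k}^n\gamma_{p_k}=0$; conversely, on the generic locus $o,f\neq 0$ the tail equations recover all interior $x_i$ from $o,f,w$, so this one equation together with the tail equations reproduces the folding equation, giving the asserted equivalence. The main obstacle is exactly this last identification: verifying that $H_m$, with its product of binomials $\binom{m-1-a}{b}\binom{m-b}{a}$, satisfies the bilinear recursion, equivalently that expanding $o^m f^m U_m(\tau/2)-o^{m+1}f^{m-1}U_{m-1}(\tau/2)$ and collecting the monomials $f^{2a}o^{2b}w^{2c}$ reproduces those coefficients. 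I expect this to be the only genuinely computational step, provable by a Chu--Vandermonde-type evaluation of the resulting double sum or by a lattice-path/sign-reversing interpretation of the coefficients.
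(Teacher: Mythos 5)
There is nothing in the paper to compare against here: Theorem~\ref{Thm:layeredsolidtoruspoly} is imported verbatim from Thompson (Theorem~3.11 of \cite{ThompsonApoly}) and the paper supplies no proof of it, so your argument can only be judged on its own terms. On those terms the skeleton is sound and I believe it works. Your normalisation of the tail equations to $x_i x_{i+2}+w^2-x_{i+1}^2=0$ matches the form \eqref{Ptolemyfilled1}, the folding equation really is the boundary condition $x_{n+1}=w$ after one more formal step of the recursion, the constancy of the Casoratian and of $(x_{i-1}+x_{i+1})/x_i$ is a correct two-line computation, and the initial data $\widehat H_0=1$, $\widehat H_1=f^2-w^2$ and the bilinear recursion $\widehat H_m\widehat H_{m-2}=\widehat H_{m-1}^2-w^2o^{2(m-1)}f^{2(m-2)}$ all check out. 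I also verified that the stated binomial formula for $H_n$ agrees with this data for $n=1,2,3$, so the plan closes correctly once the combinatorial identity is established.

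Two comments on the part you leave open. First, you would make your own life easier by replacing the bilinear recursion with the \emph{linear} three-term recursion $\widehat H_m=(o^2+f^2-w^2)\,\widehat H_{m-1}-o^2f^2\,\widehat H_{m-2}$, which follows immediately from $x_{m+1}=\tau x_m-x_{m-1}$ after multiplying by $o^mf^{m-1}$; proving that the binomial expression for $H_m$ satisfies this linear recursion is a direct Pascal-rule manipulation of $\binom{n-1-a}{b}\binom{n-b}{a}$, with no hypergeometric summation needed, whereas the bilinear route forces you to divide by $\widehat H_{m-2}$ and to handle a genuinely quadratic identity. Second, be explicit about the sense of ``equivalent'': your derivation divides by the $x_i$ (to extract $\tau$) and by $o^nf^{n-1}$ (to restate the boundary condition), so what you actually prove is equality of the two systems on the locus where the tail edge variables are nonzero. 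That is the relevant locus for the A-polynomial computation, and it is presumably what Thompson's statement intends, but as written your proof does not give an identity of ideals, and you should say so rather than leave the word ``equivalent'' unqualified.
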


\begin{corollary}\label{Cor:Apoly}
For $p, q\geq 3$, the following eight equations are defining equations for the $\PSL$-$A$-polynomial of the double twist knot $K(p,q)$.
\begin{align} 
  \Delta_0 \: & :\: - m \gamma_{\mathcal{O}_0^2} \gamma_{\mathcal{O}_1^2} +  l^{-1/2}  \gamma_{\mathcal{O}_0^1}^2 -\gamma_{E_0}\gamma_{E_1} \label{Ptolemyoutside1a}\\
  \Delta_1 \: & :\: m \gamma_{\mathcal{O}_0^1} \gamma_{\mathcal{O}_0^2} + \gamma_{\mathcal{O}_0^1} \gamma_{\mathcal{O}_1^2}-\gamma_{E_1}\gamma_{\calP^2} \\
  \Delta_2 \: & :\:  l^{-1/2} m^{-1} \gamma_{\mathcal{O}_0^1} \gamma_{\mathcal{O}_0^2} +  l^{-1/2} \gamma_{\mathcal{O}_0^1} \gamma_{\mathcal{O}_1^2}-\gamma_{E_0}\gamma_{\calP^2} \\
  \Delta_3 \: & :\:    m^{-1} \gamma_{E_0} \gamma_{E_1} +  l^{-1/2} m^{-1} \gamma_{\mathcal{O}_0^1} \gamma_{\mathcal{O}_1^1}-\gamma_{\mathcal{O}_0^2}^2 \\
  \Delta_4 \: & :\:    \gamma_{\mathcal{O}_0^1} \gamma_{\mathcal{O}_0^2} +  m \gamma_{\mathcal{O}_1^1} \gamma_{\mathcal{O}_0^2}-\gamma_{E_1}\gamma_{\calP^1} \\
  \Delta_5 \: & :\:   m \gamma_{\mathcal{O}_0^1} \gamma_{\mathcal{O}_0^2} +  \gamma_{\mathcal{O}_1^1} \gamma_{\mathcal{O}_0^2}-\gamma_{E_0}\gamma_{\calP^1} \label{Ptolemyoutside6a} \\
  \text{Tail 1} \: & :\: H_{p-2}^1 - \gamma_{\mathcal{O}_1^1}^{p-3} \gamma_{\mathcal{O}_0^1}^{p-2}\gamma_{\calP^1} \\
  \text{Tail 2} \: & :\: H_{q-2}^2 - \gamma_{\mathcal{O}_1^2}^{q-3} \gamma_{\mathcal{O}_0^2}^{q-2}\gamma_{\calP^2}
\end{align}
where,
\begin{align}
  H_n^1 &= \gamma_{\mathcal{O}_1^1}^{2n} + \sum_{a+b \leq n-1} (-1)^{n-a-b}\binom{n-1-a}{b} \binom{n-b}{a} \gamma_{\mathcal{O}_1^1}^{2a} \gamma_{\mathcal{O}_0^1}^{2b}  \gamma_{\calP^1}^{2(n-a-b)}\\
  H_m^2 &=  \gamma_{\mathcal{O}_1^2}^{2m} + \sum_{a+b \leq m-1} (-1)^{m-a-b}\binom{m-1-a}{b} \binom{m-b}{a} \gamma_{\mathcal{O}_1^2}^{2a} \gamma_{\mathcal{O}_0^2}^{2b}  \gamma_{\calP^2}^{2(m-a-b)}.
\end{align}
\end{corollary}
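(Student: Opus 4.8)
The plan is to begin from the full system of defining equations furnished by the preceding theorem, which attaches one Ptolemy equation to each of the $p+q+2$ tetrahedra of $\calT_{K(p,q)}$, and to collapse the equations coming from each layered solid torus into a single equation by invoking Theorem~\ref{Thm:layeredsolidtoruspoly}. The six equations for the tetrahedra $\Delta_0,\dots,\Delta_5$ lying outside the layered solid tori are copied verbatim into Equations~\eqref{Ptolemyoutside1a}--\eqref{Ptolemyoutside6a}, so nothing is required for them. It therefore suffices to show that the equations \eqref{Ptolemyfilled1} for $j=0,\dots,p-4$ together with the folding equation $\Delta_{p-3}^1$ are equivalent to the single equation Tail~1, and likewise for the second layered solid torus.

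For the first layered solid torus I would first match its combinatorics to the hypotheses of Theorem~\ref{Thm:layeredsolidtoruspoly}. By the construction of Section~\ref{Sec:mintriangulation}, the associated walk in the Farey graph starts at the triangle $(1/0,-1/1,0/1)$ and passes through the triangles $(-1/(j-1),-1/j,0/1)$, every step retaining the vertex $0/1$; hence the walk is a single straight tail of length $n=p-2$ pivoting about $0/1$ and terminating in a fold. In Thompson's notation the pivot slope is $p_k=\calP^1$, the outer slope dropped at the first step is $o_k=\calO_0^1$, the other retained slope is $f_k=\calO_1^1$, and the newly introduced slope is $h_k=\calO_2^1$. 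The preceding theorem already records the Ptolemy equations of the interior tetrahedra as the unweighted relations \eqref{Ptolemyfilled1}, which are exactly the tail equations of Theorem~\ref{Thm:layeredsolidtoruspoly}, and the equation $\Delta_{p-3}^1$ as Thompson's folding equation, after the edge identification $\gamma_{\calO_{p-1}^1}=\gamma_{\calP^1}$ produced by the fold. Applying Theorem~\ref{Thm:layeredsolidtoruspoly} then returns exactly Tail~1, with $H_{p-2}^1$ obtained from $H_n$ by the substitution $\gamma_{f_k}=\gamma_{\calO_1^1}$, $\gamma_{o_k}=\gamma_{\calO_0^1}$, $\gamma_{p_k}=\gamma_{\calP^1}$.

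Running the identical argument on the second layered solid torus, with $p$ replaced by $q$, superscripts $1$ replaced by $2$, and the edges $\calO_\bullet^2$, $\calP^2$ in place of $\calO_\bullet^1$, $\calP^1$, produces Tail~2. Since replacing a block of equations by an equivalent block leaves the cut-out variety unchanged, the eight equations \eqref{Ptolemyoutside1a}--\eqref{Ptolemyoutside6a}, Tail~1, and Tail~2 are defining equations for the same $\PSL$-$A$-polynomial as the full system, which is what we must prove.

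The step I expect to be the main obstacle is the careful verification that the construction of Section~\ref{Sec:mintriangulation} satisfies Thompson's hypotheses, particularly the condition that the fold has the tip in the same direction as the tail, which is what pins down the precise sign pattern recorded in $H_n$. One must also confirm that our explicitly computed equations \eqref{Ptolemyfilled1} and $\Delta_{p-3}^1$ coincide with the normalised tail and folding equations of \cite{ThompsonApoly}, rather than differing by an overall sign or by an inversion $\gamma\mapsto\gamma^{-1}$ of one of the variables. Once this dictionary between the two sets of labels (pivot slope, outer slope, tail length, and fold direction) is fixed, the collapse of each solid-torus block is an immediate application of Theorem~\ref{Thm:layeredsolidtoruspoly}.
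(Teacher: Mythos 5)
Your proposal is correct and follows exactly the route the paper intends: the six equations for $\Delta_0,\dots,\Delta_5$ are carried over unchanged from the preceding theorem, and the tail equations \eqref{Ptolemyfilled1} together with the folding equation for each layered solid torus are collapsed into a single Tail equation via Theorem~\ref{Thm:layeredsolidtoruspoly}, with the dictionary $o_k=\calO_0^i$, $f_k=\calO_1^i$, $p_k=\calP^i$, $n=p-2$ (resp.\ $q-2$) that you identify. The paper leaves this deduction implicit, so your explicit matching of the Farey-walk combinatorics to Thompson's hypotheses is, if anything, more careful than the source.
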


\bibliographystyle{amsplain}
\bibliography{bib.bib}

\end{document}